\def\ignore#1{}
\def\bR{{\mathbb R}}
\def\bN{\mathbb N}
\def\bT{\mathbb T}
\def\bS{\mathbb S}
\def\bP{{\mathbb P}}
\def\bE{{\mathbb E}}
\def\cF{{\mathscr F}}
\definecolor{DarkGreen}{rgb}{0.2,0.6,0.2}
\newcommand{\se}{ \subseteq}
\newcommand{\ee}{ \varepsilon }
\def\fp#1{\{#1\}}
\def\Ind#1{{\mathbbmss 1}_{_{\scriptstyle #1}}}
\def\lra{\longrightarrow}
\def\eps{\varepsilon}
\def\<{\langle}
\def\>{\rangle}
\def\wt#1{\widetilde{#1}}
 \def\lra{\longrightarrow}
 \def\ua{\uparrow}
 \def\da{\downarrow}
 \def\wt{\widetilde}
\newcommand{\kk}{ \kappa }
\newcommand{\bZ}{ {\mathbb{Z}} }
\newcommand{\n}[1]{\left\lVert#1\right\rVert}
\newcommand{\bone}{ {\mathbbm{1}} }
\def\ignore#1{}
\def\<{\langle}\def\>{\rangle}
\def\Ind#1{{\mathbbmss 1}_{_{\scriptstyle #1}}}
\newtheorem{theorem}{Theorem}[section]
\newtheorem{proposition}[theorem]{Proposition}
\newtheorem{corollary}[theorem]{Corollary}
\newtheorem{lemma}[theorem]{Lemma}
\theoremstyle{definition}
\newtheorem{definition}[theorem]{Definition} 
\newtheorem{remark}[theorem]{Remark}
\begin{document}
\title{\Large\bf WEIERSTRASS BRIDGES}
\author{\normalsize Alexander Schied\thanks{Department of Statistics and Actuarial Science, University of Waterloo. E-mail: {\tt aschied@uwaterloo.ca}}
	 \and\setcounter{footnote}{6}
\normalsize	Zhenyuan Zhang\thanks{
		Department of Mathematics, Stanford University. E-mail: {\tt zzy@stanford.edu}
	 		\hfill\break The authors gratefully acknowledge financial support  from the
 Natural Sciences and Engineering Research Council of Canada through grant RGPIN-2017-04054. We also thank Xiyue Han for valuable comments.}}
        
\maketitle

\vspace{-0.5cm}

\begin{abstract} We introduce a new class of stochastic processes called fractional Wiener--Weierstra\ss\ bridges. 
They arise by applying the convolution from the construction of the classical, fractal Weierstra\ss\ functions to an underlying fractional Brownian bridge.  By analyzing the $p$-th variation of the fractional Wiener--Weierstra\ss\ bridge along the sequence of $b$-adic partitions, we identify two regimes in which the processes exhibit distinct sample path properties. We also analyze the critical case between those two regimes for Wiener--Weierstra\ss\ bridges that are based on a standard Brownian bridge.  We furthermore prove that fractional Wiener--Weierstra\ss\ bridges are never semimartingales, and we show that their covariance functions are typically fractal functions. Some of our results are extended to Weierstra\ss\ bridges based on bridges derived from a general continuous Gaussian martingale. \end{abstract}
 
\noindent{\it Keywords:} Fractional Wiener--Weierstra\ss\ bridge, $p$-th variation, roughness exponent,  Gladyshev theorem, non-semimartingale process,  Gaussian process with fractal covariance structure \medskip

\noindent{\it MSC 2010:} 60G22, 60G15, 60G17, 28A80

\section{Introduction}


For $\alpha\in(0,1)$, $b\in\{2,3,\dots\}$, and a  continuous function $\phi: [0,1]\to\mathbb R$ with $\phi(0)=\phi(1)$, consider
\begin{equation}\label{f intro eq}
f(t):=\sum_{n=0}^\infty \alpha^n\phi(\fp{b^n t}),\qquad 0\le t\le 1,
\end{equation}
where $\fp{x}$ denotes the fractional part of $x\ge0$. 
If $\phi$ is a convex combination of trigonometric functions such as $\sin(2\pi t)$ or $\cos(2\pi t)$, we get Weierstra\ss' celebrated example~\cite{Weierstrass} of a function that is continuous but nowhere differentiable provided that $\alpha  b$ is sufficiently large. If $\phi$ is the tent map, i.e., $\phi(t)=t\wedge(1-t)$, then we obtain the class of Takagi--van der Waerden functions~\cite{Takagi,vanderWaerden}. Also, the case of a general Lipschitz continuous function  $\phi$ has been studied extensively; see, e.g., the survey~\cite{BaranskiSurvey} and the references therein. Typical questions that have been investigated include smoothness versus nondifferentiability~\cite{Hardy1916}, local and global moduli of continuity~\cite{deLimaSmania}, Hausdorff dimension of the graphs~\cite{Ledrappier,ren2021dichotomy}, extrema~\cite{Kahane}, and $p$-th and $\Phi$-variation~\cite{SchiedZZhang,HanSchiedZhang1}, to mention only a few. An intriguing connection between Weierstra\ss'   function and fractional Brownian motion is discussed in \cite{pipiras2000convergence}, where it is shown that a randomized version of  Weierstra\ss'   function converges to fractional Brownian motion.

In this paper, our goal is to study random functions that arise when $\phi$ is replaced by the sample paths of a stochastic process $B=(B(t))_{0\le t\le1}$  with identical values at $t=0$ and $t=1$. This leads to a new class of stochastic processes $X$ of the form
$$X(t):=
\sum_{n=0}^\infty \alpha^nB(\fp{b^n t}),\qquad 0\le t\le 1,
$$
that we call \emph{Weierstra\ss\ bridges}. 
In this paper, we mainly focus on Weierstra\ss\ bridges that are based on (fractional) Brownian bridges $B$. They are called \emph{(fractional) Wiener--Weierstra\ss\ bridges}.

Our first results study the $p$-th variation of the fractional Wiener--Weierstra\ss\ bridge $X$ along the sequence of $b$-adic partitions. Letting $H$ denote the Hurst parameter of $B$ and $K:=1\wedge(-\log_b\alpha)$, we show that the $p$-th variation of $X$ is infinite for $p<1/(H\wedge K)$ and zero for $p>1/(H\wedge K)$. The behavior of the $p$-th variation for $p=1/(H\wedge K)$ depends on whether $H<K$, $H=K$, or $H>K$. \Cref{main thm} identifies  this  $p$-th variation for  $H\neq K$. The critical case $H=K$ is more subtle and analyzed in \Cref{critical case H=1/2 thm}  for the case $H=1/2=K$. It contains a Gladyshev-type theorem for the rescaled quadratic variations of $X$, which implies that the quadratic variation itself is infinite.

We also show that the (fractional) Wiener--Weierstra\ss\ bridge is never a semimartingale and that its covariance function often has a fractal structure, which sometimes is just as \lq rough\rq\ as the sample paths of the process itself. We also briefly discuss the case in which the underlying bridge $B$ is derived from a generic, continuous Gaussian martingale. All our main results are presented in \Cref{resultssection}. The proofs are collected in \Cref{Proofs section}.

Some of our proofs are based on an analysis of deterministic fractal functions $f$ of the form \eqref{f intro eq}, for which $\phi$ is no longer Lipschitz-continuous but has H\"older regularity. These results are presented in \Cref {Hoelder section} and are of possible independent interest.

\section{Statement of main results}\label{resultssection}

Let $W=(W(t))_{t\ge0}$ be a fractional Brownian motion with Hurst parameter $H\in(0,1)$, and choose a deterministic function $\kappa:[0,1]\to[0,1]$ that satisfies $\kappa(0)=0$ and $\kappa(1)=1$. The stochastic process 
\begin{align}
B(t):=W(t)-\kappa(t)W(1),\qquad  t\in[0,1],\label{fbb}
\end{align}
can then be regarded as a fractional Brownian bridge. If we take specifically
\begin{align}\kappa(t):=\frac{1}{2}(1+t^{2H}-(1-t)^{2H}),\label{standard kappa}
\end{align}
then the law of $B$ is (at least informally) equal to the law of $W$ conditioned on $\{W(1)=0\}$, and so $B$ is a standard bridge; see~\cite{GasbarraSottinenValkeila}. However, the specific form of $\kappa$ is not going to be needed in the sequel. All we are going to require is that $B$ is of the form $B(t)=W(t)-\kappa(t)W(1)$ for some function $\kappa:[0,1]\to[0,1]$ that satisfies $\kappa(0)=0$ and $\kappa(1)=1$ and that is H\"older continuous with some exponent $\tau\in(H,1]$. Obviously, the function $\kappa$ in \eqref{standard kappa} satisfies these requirements. Both $B$ and $\kappa$ will be fixed in the sequel.

\begin{definition}  We denote by $\fp{x}$  the fractional part of $x\ge0$. For $\alpha\in(0,1)$ and $b\in\{2,3,\dots\}$, the stochastic process
\begin{equation}\label{WW def eq}
X(t):=
\sum_{n=0}^\infty \alpha^nB(\fp{b^n t}),\qquad 0\le t\le 1,
\end{equation}
is called the \emph{fractional Wiener--Weierstra\ss\ bridge} with parameters $\alpha$, $b$, and $H$.
\end{definition}

Our terminology stems from the fact that by replacing in \eqref{WW def eq} the fractional Brownian bridge with a 1-periodic trigonometric function, $X$ becomes a classical Weierstra\ss\ function. 
In addition, the following remark gives a representation of $X$ in terms of rescaled Weierstra\ss\ functions if $H>1/2$.

\begin{remark}\label{Fourier remark} Let us develop a sample path of $B$ into a Fourier series, i.e.,
\begin{equation}\label{BB Fourier series}
\begin{split}
B(t)
&=\sum_{k=1}^\infty\Big(\xi_k\big(\cos(2\pi kt)-1\big)+\eta_{k}\sin(2\pi kt)\Big),
\end{split}
\end{equation}
where we have used that $B(1)=0$ and where the $\xi_k$ and $\eta_k$ are certain centered normal random variables. If $H>1/2$, then $B$ is $\bP$-a.s.~H\"older continuous for some exponent larger than $1/2$, and so a theorem by Bernstein (see, e.g., Section I.6.3 in~\cite{Katznelson}) yields that the Fourier series~\eqref{BB Fourier series}
 converges absolutely, and in turn uniformly (see, e.g., Corollary 2.3 in~\cite{SteinShakarchi}). We therefore may interchange summation in 
\eqref{WW def eq} and obtain for $H>1/2$ the representation
$$X(t)=\sum_{k=1}^\infty\big(\xi_kf( kt)+\eta_{k}g(kt)\big),
$$
where
\begin{equation}\label{Weierstrass expansion functions}
f(t)=\sum_{n=0}^\infty\alpha^n\big(\cos(2\pi b^nt)-1\big)\qquad\text{and}\qquad g(t)=\sum_{n=0}^\infty\alpha^n\sin(2\pi b^nt)
\end{equation}
are classical Weierstra\ss\ functions. 

 \end{remark}

The sample paths of $X$ have two competing sources of \lq roughness\rq. The first is due to the underlying fractional Brownian bridge, whose roughness is usually measured by the Hurst parameter $H$. The second source is the Weierstra\ss--type convolution, which generates fractal functions. In the context of \Cref{Fourier remark}, the latter source can also be represented through the roughness of the Weierstra{\ss} functions $f$ and $g$ in \eqref{Weierstrass expansion functions}. For fractional Brownian motion, the Hurst parameter, which is originally defined via autocorrelation, also governs many sample path properties \cite{Mishura} and is thus an appropriate measure of the roughness of trajectories. 
However, as pointed out in \cite{Gneiting2004Hurst}, the Hurst parameter of a given stochastic process may sometimes be completely unrelated to a geometric measure of roughness such as the fractal dimension. As discussed in more detail in~\cite{HanSchiedHurst}, a more robust approach to measuring the roughness of a function $f:[0,1]\to\bR$ is based on the concept of the $p$-th variation of $f$ along a refining sequence of partitions. Based on the sequence of  $b$-adic partitions, which will be fixed throughout this paper,  the $p$-th variation of $f$ is defined as 
\begin{equation}\label{pth variation}
\<f\>^{(p)}_t:=\lim_{n\ua\infty}\sum_{k=0}^{\lfloor tb^n\rfloor}\big|f((k+1)b^{-n})-f(kb^{-n})\big|^p,\qquad t\in[0,1],
\end{equation}
provided the limit exists for all $t$ and where $\lfloor x\rfloor$ denotes the largest integer less than or equal to $x$. This concept of $p$-th variation is meaningful for several reasons. First, functions that admit a continuous  $p$-th variation can be used as integrators in pathwise  It\^o calculus even if they do not arise as typical trajectories of a semimartingale; this fact was first discovered by F\"ollmer~\cite{FoellmerIto} for $1\le p\le 2$ and more recently extended to all $p\ge1$ by Cont and Perkowski~\cite{ContPerkowski}. Second, the following implication holds for $t>0$,
\begin{equation}\label{qth variation when pth variation nontrivial eq}
\text{ if $0<\<f\>^{(p)}_t<\infty$, then \ }\<f\>^{(q)}_t=\begin{cases}
\infty&\text{for $q<p$,}\\
0&\text{for $q>p$;}
\end{cases}
\end{equation}
see the final step in the proof of Theorem 2.1 in~\cite{MishuraSchied2}.
Thus, if $p$ is such that \eqref{qth variation when pth variation nontrivial eq}
 holds, then $K:=1/p$ is a natural measure for the roughness of $f$. It is called the roughness exponent in~\cite{HanSchiedHurst}. For our fractional Brownian bridge, we have $\bP$-a.s.~that $\<B\>^{(1/H)}_t=t\cdot\bE[|W(1)|^{1/H}]$ (this follows from combining~\cite[Theorem 5.1]{HanSchiedHurst} with~\cite[Lemma 2.4]{SchiedZZhang}), and so its roughness exponent is almost surely equal to the  Hurst parameter $H$. For Weierstra\ss\ functions of the form \eqref{Weierstrass expansion functions}, it is a consequence of Theorem 2.1 in~\cite{SchiedZZhang} that their roughness exponent is given by
\begin{equation*}
K=1\wedge\big({-\log_b\alpha}\big).
\end{equation*}

When analyzing the roughness of the trajectories of the fractional Wiener--Weierstra\ss\ bridge, we can expect competition between the Hurst exponent $H$ of the underlying fractional Brownian bridge and the roughness exponent $K$ resulting from the Weierstra\ss--type convolution. Indeed, our first result,
\Cref{main thm}, confirms in particular that the roughness exponent of the sample paths of $X$ is given by $H\wedge K$, provided that $H\neq K$. It shows moreover that for $p=1/(H\wedge K)$, the  $p$-th variation of $X$ has distinct features in each of the two regimes $H<K$ (Hurst exponent wins the competition) and $H>K$ (Weierstra\ss--type convolution wins the competition). For $H<K$, the trajectories of $X$ have deterministic $p$-th variation that we can compute explicitly.  For $H>K$, however, the  $p$-th variation of $X$ appears to be no longer deterministic. The critical case $H=K$ is more delicate and will be discussed subsequently.
 
\begin{theorem}\label{main thm} Let $X$ be a fractional Wiener--Weierstra\ss\ bridge with parameters $\alpha$, $b$, and $H$, and suppose that $H\neq K=1\wedge(-\log_b\alpha)$.
Then $\bP$-almost every sample path of $X$ admits the roughness exponent $H\wedge K$. More precisely:
\begin{enumerate}
 \item For $H>K$, there exists a finite and strictly positive random variable $V$ such that $\bP$-a.s.~for all $t\in[0,1]$,
 \begin{equation}\label{thm H>K}
 \<X\>^{(1/K)}_t=V\cdot t.
 \end{equation}
%
 \item For $H<K$, we have $\bP$-a.s.~for all $t\in[0,1]$,
 \begin{equation}\label{qua}
\<X\>^{(1/H)}_t= \bigg(\frac{2^{1/(2H)}\Gamma\big(\frac{H+1}{2H}\big)}{\sqrt\pi(1-\alpha^2b^{2H})^{1/(2H)}}\bigg)\cdot t.
 \end{equation}
\end{enumerate}
\end{theorem}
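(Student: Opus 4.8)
The common engine for both parts is the functional equation $X(t)=B(t)+\alpha X(\{bt\})$, valid for $t\in[0,1)$. Writing $\Delta^n_kf:=f((k+1)b^{-n})-f(kb^{-n})$ for the $b$-adic increments, iterating this identity gives
\[
\Delta^n_kX=\Delta^n_kB+\alpha\,\Delta^{n-1}_{k\bmod b^{n-1}}X=\sum_{\ell=1}^n\alpha^{n-\ell}\,\Delta^\ell_{k\bmod b^\ell}B,\qquad 0\le k<b^n .
\]
I would first record two soft facts: (i) a.s.\ $|\Delta^\ell_jB|\le\|B\|_{C^\gamma}b^{-\ell\gamma}$ for every $\gamma<H$, hence $|\Delta^n_kX|\le C(B)\,b^{-n(\gamma\wedge K)}$, so for $p>1/(H\wedge K)$ the $p$-th variation of $X$ vanishes; (ii) the bridge correction $\kappa((j+1)b^{-\ell})-\kappa(jb^{-\ell})=O(b^{-\ell\tau})$ with $\tau>H$ is negligible next to the fractional-Brownian increment $\Delta^\ell_jW$, so in every scale-counting argument one may replace $\Delta^\ell_jB$ by $\Delta^\ell_jW$. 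I will also use the elementary lemma that $\langle g+h\rangle^{(p)}_t=\langle h\rangle^{(p)}_t$ whenever $\langle g\rangle^{(p)}_t=0$ and $\langle h\rangle^{(p)}_t<\infty$ (from $\big||x+y|^p-|x|^p\big|\le p(|x|+|y|)^{p-1}|y|$ and Hölder's inequality).

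\textbf{Part (a): $H>K$.} Here necessarily $K<1$, so $\alpha=b^{-K}$ and $b\alpha^{1/K}=1$; also $B$ is a.s.\ $\gamma$-Hölder for some $\gamma\in(K,H)$. I would condition on the sample path of $B$ and view $X$ as a deterministic fractal function $f(t)=\sum_n\alpha^n\phi(\{b^nt\})$ with $\phi=B$ of Hölder class $\gamma>K$ — exactly the situation of the Hölder section. With $S_n:=\sum_{k=0}^{b^n-1}|\Delta^n_kX|^{1/K}$, the one-step identity gives $S_n=\sum_{k'=0}^{b^{n-1}-1}\sum_{j=0}^{b-1}\big|\alpha\Delta^{n-1}_{k'}X+\Delta^n_{k'+jb^{n-1}}B\big|^{1/K}$, while $b\alpha^{1/K}S_{n-1}=S_{n-1}$; combining these with $\big||x+y|^{1/K}-|x|^{1/K}\big|$ estimates and the scale bounds above yields $|S_n-S_{n-1}|\le C(B)\,b^{n(K-\gamma)}$, which is summable, so $S_n\to V:=\langle X\rangle^{(1/K)}_1$ exists and is finite. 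Applying the same analysis on each $[j/b,(j+1)/b]$, where $X$ reparametrizes to $\phi_j+\alpha X$ with $\phi_j\in C^\gamma$ of zero $1/K$-variation, gives $\langle X\rangle^{(1/K)}_{(j+1)/b}-\langle X\rangle^{(1/K)}_{j/b}=\alpha^{1/K}V$ for every $j$; iterating over the $b$-adic grid and using monotonicity and continuity of $t\mapsto Vt$ yields $\langle X\rangle^{(1/K)}_t=Vt$. The genuine obstacle in this part is $V>0$ a.s.: I expect this from a non-degeneracy condition on $\phi$ in the deterministic Hölder result, satisfied by a.e.\ path of $B$, established by a reverse-triangle lower bound retaining only the contribution of a single well-chosen scale, supplemented if needed by computing $\bE[V]>0$ from the Gaussian increment variances together with a zero--one argument.

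\textbf{Part (b): $H<K$.} Now $\alpha b^H<1$, hence $\alpha^2b^{2H}<1$. Since the $\kappa$-corrections are negligible, $\Delta^n_kX$ is, up to an $o(b^{-nH})$ term, the centered Gaussian $G_{n,k}:=\sum_{\ell=1}^n\alpha^{n-\ell}\Delta^\ell_{k\bmod b^\ell}W$. Two $b$-adic fractional-Brownian increments $\Delta^\ell_{j_\ell}W,\Delta^m_{j_m}W$ with $\ell<m$ are supported either on disjoint intervals or on nested ones, and for uniformly random $k\in\{0,\dots,b^n-1\}$ one has $\bP\big(I_m(k)\subseteq I_\ell(k)\big)=b^{-\ell}$, where $I_\ell(k)$ is the generation-$\ell$ interval indexed by $k\bmod b^\ell$. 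Using this I would show
\[
\frac1{b^n}\sum_{k=0}^{b^n-1}\mathrm{Var}\big(b^{nH}G_{n,k}\big)\longrightarrow\sigma^2:=\frac1{1-\alpha^2b^{2H}},
\]
since the diagonal part equals $b^{-2nH}\sum_{i=0}^{n-1}(\alpha^2b^{2H})^i\to\sigma^2b^{-2nH}$ and the off-diagonal (nested) part, averaged over $k$, is $o(b^{-2nH})$ — with a second-moment estimate controlling its fluctuation across $k$ as well. It then remains to upgrade this to the $p$-th–variation statement: each $b^{nH}G_{n,k}$ being Gaussian with variance close to $\sigma^2$, a Gladyshev-type ergodic law of large numbers for the dependent Gaussian array $(b^{nH}G_{n,k})_{0\le k<b^n}$, of the kind established in \cite{HanSchiedHurst} and \cite{SchiedZZhang}, gives $\sum_{k=0}^{\lfloor tb^n\rfloor}|\Delta^n_kX|^{1/H}\to t\,\bE\big[|N(0,\sigma^2)|^{1/H}\big]$ a.s.\ for every $t$. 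Evaluating $\bE\big[|N(0,\sigma^2)|^{1/H}\big]=\sigma^{1/H}\bE\big[|N(0,1)|^{1/H}\big]=(1-\alpha^2b^{2H})^{-1/(2H)}\,2^{1/(2H)}\Gamma\big(\tfrac{H+1}{2H}\big)/\sqrt\pi$ then gives \eqref{qua}. The main difficulty here is this last step: the increments $(\Delta^n_kX)_k$ are strongly correlated (all read off the same $W$), so one must exploit quantitative decorrelation — decay of $\mathrm{Cov}(G_{n,k},G_{n,k'})$ in the $b$-adic distance between $k$ and $k'$, controlled by the nesting combinatorics above — to run a variance-summation / Borel--Cantelli argument.
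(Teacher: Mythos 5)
Your overall architecture coincides with the paper's: for (a) you condition on the sample path of $B$ and treat $X$ as a deterministic fractal function with a H\"older base of exponent $\gamma\in(K,H)$ (the situation of \Cref{p variation prop}), and for (b) you compute the limit of the expected $p$-th variation and then try to concentrate; your telescoping bound $|S_n-S_{n-1}|\le C(B)\,b^{n(K-\gamma)}$ is a legitimate, slightly more self-contained substitute for the paper's representation of $\<X\>^{(1/K)}_1$ as $\bE_R[|Z|^{1/K}]$. But the two steps you yourself flag as the genuine obstacles are exactly where the paper has to work hardest, and your proposed resolutions do not close them. For the positivity of $V$ in (a): a pathwise ``retain one well-chosen scale'' lower bound cannot work, because the positivity criterion in the deterministic H\"older result is the sign condition \eqref{cond} (all values $\phi(b^{-k})$ of one sign), which almost no Brownian-bridge path satisfies; without such a non-cancellation hypothesis the series $\sum_m\alpha^{-m}\big(\phi((r_m+1)b^{-m})-\phi(r_mb^{-m})\big)$ can vanish for particular digit sequences. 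Your fallback is also not a proof: $\bE[V]>0$ does not imply $V>0$ a.s., and $\{V>0\}$ is not visibly a zero--one event for $W$. The paper's actual argument (\Cref{fractional bridge thm lemma}) fixes the digit sequence $(r_m)$, writes the series as a fractional Wiener integral $\int_0^1f\,dW$ with $f$ nonzero on an open interval, and proves its variance is strictly positive --- which for $H\neq1/2$ needs genuinely nontrivial input (strict positive definiteness of the kernel $|u-v|^{2H-2}$ for $H>1/2$, the Fourier-multiplier description of the integrand space for $H<1/2$); Gaussianity then gives $Z\neq0$ a.s.\ and Fubini finishes. This mechanism is absent from your sketch.

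For the almost-sure upgrade in (b): there is no off-the-shelf ``Gladyshev-type LLN'' for the array $(b^{nH}G_{n,k})_k$ --- it is non-stationary, and its correlations combine a polynomially decaying top-scale part with order-one correlations between indices sharing coarse $b$-adic blocks, so the quantitative decorrelation you invoke is precisely the content to be proved. The paper establishes it via a Borell--TIS concentration inequality for $V_n^H$ (\Cref{Marcus Rosen lemma}) whose width $\sigma_n^2$ is shown to decay geometrically by a two-level induction over scales, exploiting self-similarity and a homogeneity property of the supremum functionals (\Cref{l7} and \Cref{p4}); this occupies the bulk of the proof. A second-moment/Borel--Cantelli route may be salvageable, but it would require controlling all cross-scale covariances --- the same nesting combinatorics --- and you have not carried this out. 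You would also need to treat general $t$ (linearity of $t\mapsto\<X\>^{(1/H)}_t$), which the paper handles by rerunning the whole argument on $b$-adic subintervals.
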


The factor $\frac1{\sqrt\pi}2^{1/(2H)}\Gamma(\frac{H+1}{2H})$ appearing in \eqref{qua}  is equal to $\bE[|Z|^{1/H}]$, where $Z$ is a standard normal random variable. It can be viewed as the contribution of $B$ to $\<X\>^{(1/H)}$. The term $(1-\alpha^2b^{2H})^{1/(2H)}$, on the other hand, results from the Weierstra\ss--type convolution in the construction of $X$. The random variable $V$ in \eqref{thm H>K} has a complicated structure. As we are going to see in \Cref{pth var rep remark}, $V$ can be represented as a mixture of the $(1/K)^{\text{th}}$ powers of the absolute values of certain Wiener integrals with integrator $W$. The histograms in \Cref{V figure} provide an illustration of the empirical distribution of $V$ for two sets of parameter values.
\begin{figure}
\centering
\includegraphics[width=8cm]{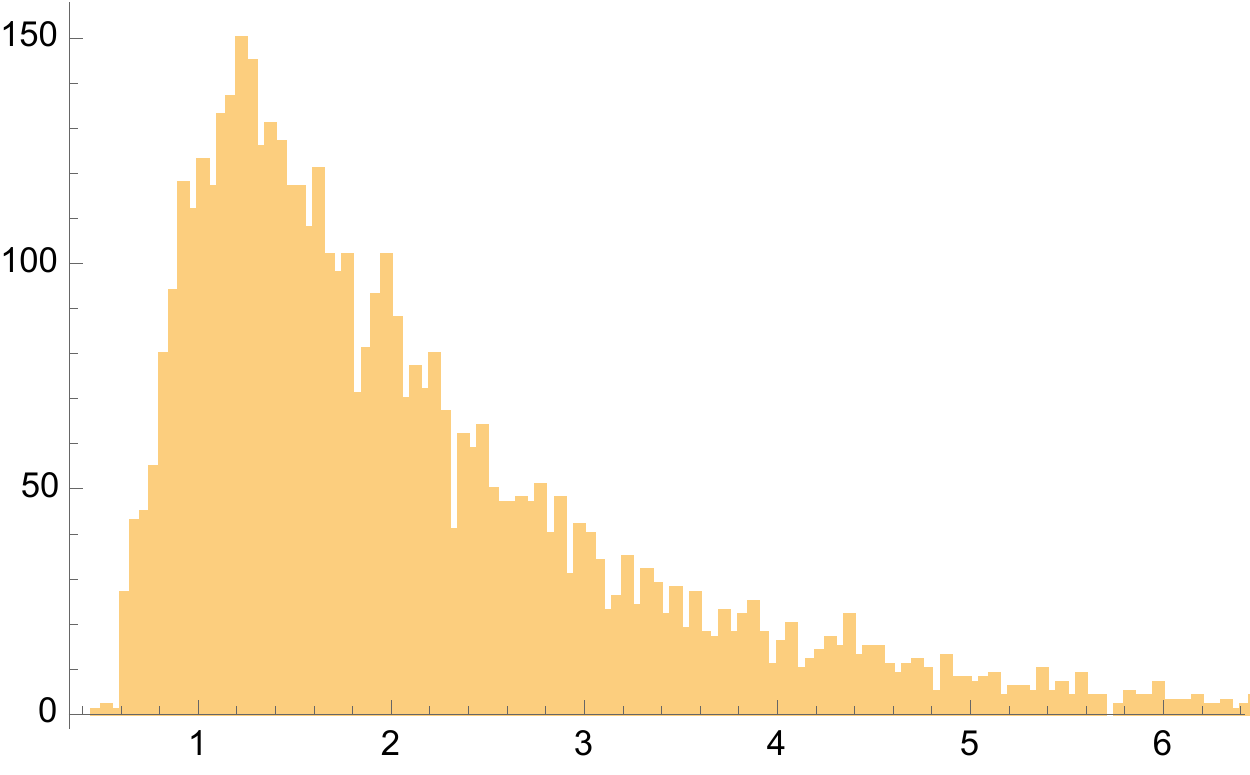}\qquad \includegraphics[width=8cm]{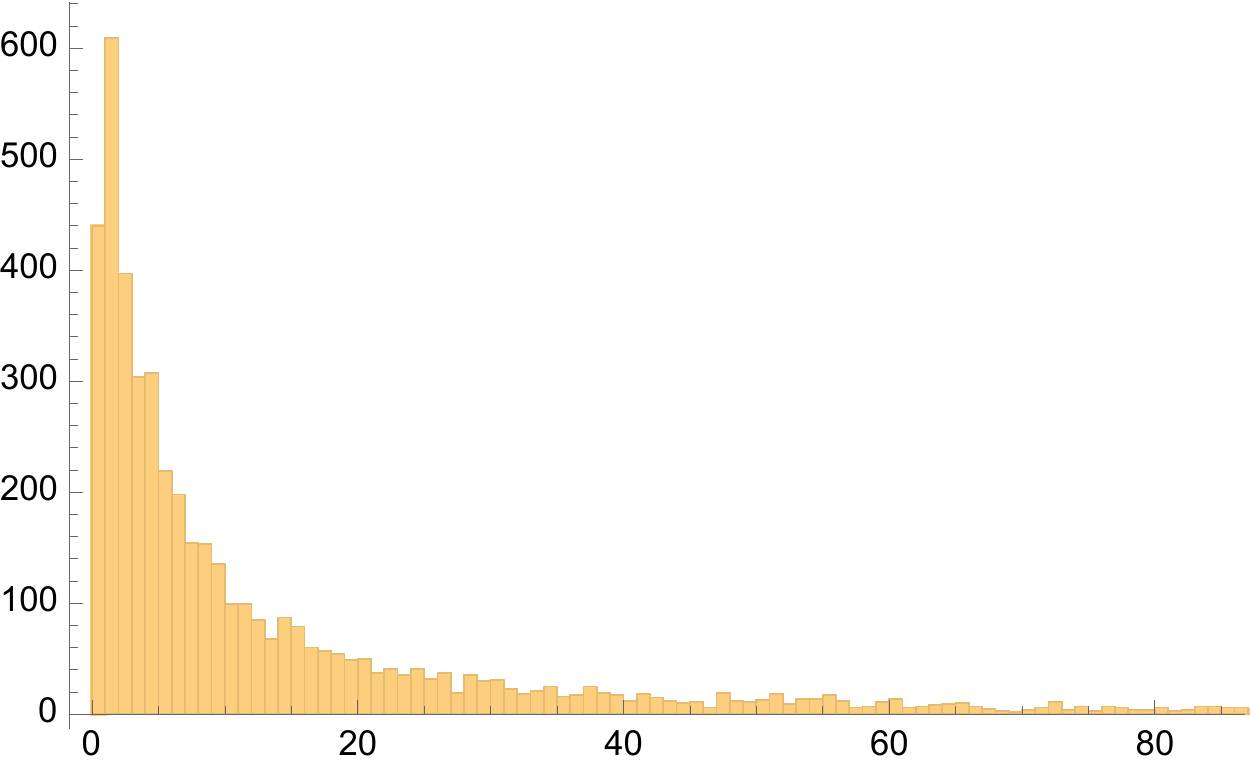}
\caption{Illustration of \Cref{main thm} (a) by means of histograms of the $(1/K)^{\text{th}}$ variation $\sum_{k=0}^{b^n}|X((k+1)b^{-n})-X(kb^{-n})\big|^{1/K}$ for 5000 sample paths of the fractional Wiener--Weierstra\ss\ bridge with $n=16$, $b=2$, and parameters $H=0.7$ and $K=0.5$ (left) versus $H=0.5$ and $K=0.2$ (right).}
\label{V figure}\end{figure}

Note that the increment process of the fractional Wiener--Weierstra\ss\ bridge is highly nonstationary. Therefore, classical results on the variation of Gaussian processes with stationary increments \cite{marcus1992p,MarcusRosen} are not applicable. 

Now we turn to the critical case  $H=K$, which we discuss for  $H=1/2$. In this case, the pattern observed in \Cref{main thm} breaks down and the quadratic variation of $X$ is infinite, even though the roughness exponent of $X$ is still equal to $1/2$. 

\begin{theorem}\label{critical case H=1/2 thm} Let $X$ be a  Wiener--Weierstra\ss\ bridge with parameters $\alpha$, $b$, and $H=1/2$ where $\alpha^2b=1$ $($in particular, $H= 1/2=K=1\wedge(-\log_b\alpha))$. Then the roughness exponent of $X$ is almost surely equal to $1/2$. Moreover,  $\bP$-a.s.~for all $t\in(0,1]$,
\begin{equation}\label{critical case H=1/2 eqn} 
\lim_{n\ua\infty}\frac1n \sum_{k=0}^{\lfloor tb^n\rfloor}\big(X((k+1)b^{-n})-X(kb^{-n})\big)^2=  t.
\end{equation}
In particular, the $p$-th variation of $X$ is almost surely infinite for $p\le2$ and zero for $p>2$.
\end{theorem}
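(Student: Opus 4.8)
The plan is to reduce everything to a self-similar recursion for the generation-$n$ quadratic sums $V_n:=\sum_{k=0}^{b^n-1}\big(X((k+1)b^{-n})-X(kb^{-n})\big)^2$. From the definition of $X$ and $B(0)=B(1)=0$ one checks that, for $t$ in the $j$-th generation-one $b$-adic interval, $X(t)=B(t)+\alpha X(bt-j)$; hence every generation-$n$ increment splits as $\Delta^n_kX=\Delta^n_kB+\alpha\,\Delta^{n-1}_{k'}X$ with $k=jb^{n-1}+k'$, where $\Delta^\ell_m$ denotes the increment over the $m$-th generation-$\ell$ interval. Squaring, summing over $k$, and invoking the critical identity $\alpha^2b=1$ gives $V_n=V_{n-1}+Q_n+2\alpha C_n$, where $Q_n:=\sum_{k}(\Delta^n_kB)^2$ is the generation-$n$ quadratic variation of the bridge and $C_n:=\sum_{j=0}^{b-1}\sum_{k'=0}^{b^{n-1}-1}\Delta^n_{jb^{n-1}+k'}B\cdot\Delta^{n-1}_{k'}X$ is a cross term. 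Since $V_0=0$, iterating yields $V_n=\sum_{m=1}^n(Q_m+2\alpha C_m)$, so \eqref{critical case H=1/2 eqn} at $t=1$ will follow once we prove $Q_m\to1$ and $C_m\to0$ $\bP$-almost surely.

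That $Q_m\to\langle B\rangle^{(2)}_1=1$ a.s.\ is classical: writing $B=W-\kappa W(1)$, the generation-$m$ quadratic variation of $W$ tends a.s.\ to $1$ and the $\kappa$-correction vanishes because $\kappa$ is H\"older of some order $\tau>1/2$; the Ces\`aro mean then also tends to $1$. The cross term is the crux. Collecting the $b$ copies of each generation-$(n-1)$ block, one rewrites $C_n=\sum_{k'=0}^{b^{n-1}-1}\hat A_{k'}\,\Delta^{n-1}_{k'}X$, where $\hat A_{k'}$ is the sum of the $b$ generation-$n$ increments of $W$ sitting at position $k'$ in the $b$ first-level blocks, up to a correction built from $\kappa$-increments of size $O(b^{-n\tau})$ that contributes only a negligible, summable remainder (and drops out exactly for the standard bridge, since $\sum_{k'}\Delta^{n-1}_{k'}X=0$); the key point is that the $\hat A_{k'}$ are i.i.d.\ centered Gaussian of variance $b^{1-n}$. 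Expanding $\Delta^{n-1}_{k'}X=\sum_{\ell=1}^{n-1}\alpha^{n-1-\ell}\Delta^\ell_{k'\bmod b^\ell}B$ and using that $C_n$ is a Gaussian quadratic form, Wick's formula reduces $\bE[C_n^2]$ to sums of products of covariances of the $\hat A_{k'}$ and of the bridge increments. Carrying out the combinatorics dictated by the self-similar index map $k\mapsto k\bmod b^\ell$ --- essentially counting, for each pair of generations $\ell\le\ell'$, how often two prescribed bridge increments get multiplied together among the $b^n$ terms --- yields $\bE[C_n]=O(b^{-n/2})$ (in fact $\bE[C_n]=0$ for the standard bridge, the relevant counts being exactly $b^{n-\ell}$) and $\operatorname{Var}(C_n)=O(n^2b^{-n})$. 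Chebyshev's inequality and Borel--Cantelli then give $C_n\to0$ a.s., hence $\tfrac1n\sum_{m\le n}C_m\to0$, establishing \eqref{critical case H=1/2 eqn} for $t=1$.

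For the localization at a $b$-adic point $t=jb^{-m}$, I iterate the self-similarity $m$ times: on each generation-$m$ block $X=X^{(m)}+\alpha^mX(\{b^m\cdot\})$ with $X^{(m)}:=\sum_{p=0}^{m-1}\alpha^pB(\{b^p\cdot\})$, and the same truncated recursion shows $\sum_{k=0}^{b^n-1}(\Delta^n_kX^{(m)})^2\to m$, so this quantity stays bounded in $n$. Writing a generation-$n$ index inside generation-$m$ block $i$ as $ib^{n-m}+k''$ and using $\alpha^{2m}=b^{-m}$, one gets $\sum_{k=0}^{jb^{n-m}-1}(\Delta^n_kX)^2=jb^{-m}V_{n-m}+O(1)+O(\sqrt n)$, the $O(\sqrt n)$ coming from a Cauchy--Schwarz bound on the block cross terms; dividing by $n$ and using $V_{n-m}/n\to1$ gives the limit $jb^{-m}$. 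For a general $t\in(0,1]$ one then squeezes $\tfrac1n\sum_{k=0}^{\lfloor tb^n\rfloor}(\Delta^n_kX)^2$ between its values along the $b$-adic points $\lfloor tb^m\rfloor b^{-m}$ and $(\lfloor tb^m\rfloor+1)b^{-m}$ and lets $m\to\infty$; intersecting the exceptional null sets over the countably many $b$-adic levels delivers \eqref{critical case H=1/2 eqn} simultaneously for all $t$.

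The remaining assertions follow at once. By \eqref{critical case H=1/2 eqn} with $t=1$ we have $\sum_k(\Delta^n_kX)^2\sim n\to\infty$, hence $\langle X\rangle^{(p)}_t=\infty$ for every $p\le2$ and $t>0$ (since $|\Delta^n_kX|<1$ eventually, so $|\Delta^n_kX|^p\ge|\Delta^n_kX|^2$ when $p<2$). For $p>2$, $\sum_{k=0}^{\lfloor tb^n\rfloor}|\Delta^n_kX|^p\le(\max_k|\Delta^n_kX|)^{p-2}\sum_{k=0}^{\lfloor tb^n\rfloor}(\Delta^n_kX)^2$, and this tends to $0$ because a standard estimate shows $X$ is a.s.\ $\gamma$-H\"older for every $\gamma<1/2$ (inherited from $W$ and $\kappa$), while the quadratic sum grows only like $n$. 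Thus the $p$-th variation of $X$ is a.s.\ infinite for $p\le2$ and zero for $p>2$, and the roughness exponent equals $1/2$. The genuinely delicate step is the second-moment bound $\operatorname{Var}(C_n)=O(n^2b^{-n})$: it rests on the i.i.d.\ structure of the $\hat A_{k'}$ and on tracking precisely how many times a fixed bridge increment is reused among the $b^n$ generation-$n$ increments of $X$, and the arithmetic of these overlap counts is where the main effort goes.
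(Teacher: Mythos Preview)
Your approach is genuinely different from the paper's and structurally quite elegant. The paper first reduces to the standard bridge via a quadratic-variation perturbation lemma, then proves $\frac1n\bE_W[V_n]\to1$ through the auxiliary random-walk $(R_m)$ representation, and obtains almost-sure convergence from the Hanson--Wright inequality by bounding $\inf_m(\mathrm{trace}\,A^m)^{1/m}$ via the row-sum estimate $\sum_j|a_{i,j}|\le L$. You instead exploit the exact identity $V_n=V_{n-1}+Q_n+2\alpha C_n$, which is available precisely at criticality $\alpha^2b=1$, and reduce everything to $C_n\to0$. This gives a cleaner structural explanation of the linear growth of $V_n$ and avoids the Hanson--Wright machinery; your $p>2$ argument via H\"older continuity is also more direct than the paper's expectation-plus-Borel--Cantelli route.

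That said, the two moment estimates you assert for $C_n$ are the entire content, and the proposal does not carry them out. The variance bound is in fact easy once one notices that the Brownian parts $\hat W_{k'}:=\sum_j\Delta^n_{jb^{n-1}+k'}W$ are \emph{orthogonal} with common variance $b^{1-n}$: writing $Y_{l'}:=\Delta^{n-1}_{l'}X$ and projecting onto $\mathrm{span}\{\hat W_{k'}\}$ gives $\sum_{k'}(\bE[\hat W_{k'}Y_{l'}])^2\le b^{1-n}\bE[Y_{l'}^2]$, whence the second Wick term is $\le b^{1-n}\bE[V_{n-1}]=O(nb^{-n})$; the first Wick term is $b^{1-n}\bE[V_{n-1}]$ by the same orthogonality. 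So $\mathrm{Var}(C_n)=O(nb^{-n})$ with no combinatorics at all.

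The more delicate point is $\bE[C_n]\to0$. Your argument for $\frac1nV_n\to1$ needs this (or at least $\frac1n\sum_m\bE[C_m]\to0$), and you cannot extract it from $\bE[V_n]/n\to1$ without circularity, since in your scheme that limit is a \emph{consequence} of the $C_m$-analysis. Your claim $\bE[C_n]=O(b^{-n/2})$ is in fact correct for the standard bridge (one can check it by computing $\bE[V_n]-\bE[V_{n-1}]-\bE[Q_n]$ via the $(R_m)$-representation, where the off-diagonal contribution and the $h_n^2$-correction each change by $O(b^{-n/2})$ between consecutive $n$), but the one-line justification ``the relevant counts being exactly $b^{n-\ell}$'' does not establish it. You should either supply a direct covariance computation for $\sum_{k'}\bE[\hat W_{k'}Y_{k'}]$ or borrow the expectation analysis from the paper's framework; otherwise this step is a genuine gap.
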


Since the quadratic variation of the Wiener--Weierstra\ss\ bridge with $H=1/2=K$ is infinite, it cannot be a semimartingale. The following theorem extends the latter observation to all parameter choices.

\begin{theorem}\label{semimartingale thm}
For any $H\in(0,1)$, $\alpha\in(0,1)$, and $b\in\{2,3,\dots\}$, the fractional Wiener--Weierstra\ss\ bridge $X$ is not a semimartingale.
\end{theorem}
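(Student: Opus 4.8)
The plan is to split into two regimes according to whether $H\wedge K<1/2$ or $H\wedge K\ge 1/2$, and to derive a contradiction with the semimartingale property in each.

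First, consider the case $H\wedge K \le 1/2$. If $H\wedge K<1/2$, then Theorem \ref{main thm} (or Theorem \ref{critical case H=1/2 thm} in the boundary subcase $H=K=1/2$, or the case $H=1/2<K$, which needs separate attention) shows that the quadratic variation $\langle X\rangle^{(2)}_t$ along $b$-adic partitions is $\bP$-a.s.\ either infinite or identically zero on $(0,1]$: indeed when $H\wedge K<1/2$ we have $2<1/(H\wedge K)$, so by \eqref{qth variation when pth variation nontrivial eq} together with the nontriviality of the $(1/(H\wedge K))$-th variation proved in Theorem \ref{main thm}, the quadratic variation is $+\infty$ for every $t>0$; when $H=K=1/2$ it is $+\infty$ by \eqref{critical case H=1/2 eqn}. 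A continuous semimartingale, however, has $\bP$-a.s.\ finite quadratic variation along \emph{any} refining sequence of partitions whose mesh tends to zero (the $b$-adic partitions have mesh $b^{-n}\to0$), equal to the quadratic variation of its continuous local martingale part. Hence in this regime $X$ cannot be a semimartingale. The subcase $H=1/2<K$ requires a short separate argument: here $K>1/2$, and one should check that the $b$-adic quadratic variation of $X$ is still infinite — I expect this follows because the roughness contributed by $B$ (roughness exponent $1/2$) dominates, so that a Gladyshev-type blow-up persists; alternatively, one can invoke \eqref{qth variation when pth variation nontrivial eq} once it is known that $\langle X\rangle^{(1/H)}=\langle X\rangle^{(2)}$ is nontrivial, which is exactly the content of part (b) of Theorem \ref{main thm} applied with $H=1/2<K$, giving a finite strictly positive linear-in-$t$ quadratic variation — and then the argument is different: a nonzero deterministic quadratic variation is compatible with being a semimartingale, so this subcase must instead be handled by the second regime's technique below.

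Second, consider the case $H\wedge K>1/2$. Now the $b$-adic $p$-th variation of $X$ is zero for every $p>1/(H\wedge K)$, and in particular the quadratic variation of $X$ vanishes identically (take $p=2>1/(H\wedge K)$), using \eqref{qth variation when pth variation nontrivial eq} and Theorem \ref{main thm}. A continuous semimartingale with vanishing quadratic variation must have zero martingale part, hence is a continuous process of bounded variation. So it suffices to show that $X$ is $\bP$-a.s.\ \emph{not} of bounded variation on $[0,1]$. Here I would argue via the $(1/(H\wedge K))$-th variation: since $1/(H\wedge K)<2$, if $X$ had paths of bounded variation then $\langle X\rangle^{(q)}_t=0$ for all $q>1$ (a $BV$ function has vanishing $q$-variation for $q>1$), contradicting the strict positivity of $\langle X\rangle^{(1/(H\wedge K))}$ asserted in Theorem \ref{main thm}. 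This disposes of the whole regime $H\wedge K>1/2$ at once, and by the same token handles any boundary subcase where Theorem \ref{main thm}(b) gives a strictly positive $(1/H)$-variation with $H>1/2$; note $1/(H\wedge K)<2$ here so $BV$ is genuinely excluded. The remaining delicate subcase $H=1/2<K$ (and symmetrically any situation with $H\wedge K=1/2$ but not both equal) falls under this $BV$-exclusion argument as well, since there $\langle X\rangle^{(2)}$ is strictly positive and finite by Theorem \ref{main thm}(b), which rules out $X$ being $BV$; but it does \emph{not} rule out $X$ being a semimartingale with a nontrivial martingale part, so one additionally needs that a continuous semimartingale whose $b$-adic quadratic variation equals $c\,t$ for a constant $c>0$ has a nonzero martingale part — and then one must separately exclude that $X$ is such a process. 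The cleanest route, which I would adopt to cover all cases uniformly, is the following: a continuous semimartingale $X$ with decomposition $X=M+A$ has, along $b$-adic partitions, $\langle X\rangle^{(p)}\equiv 0$ for every $p>2$ and $\langle X\rangle^{(p)}\equiv\infty$ on $\{t:\langle M\rangle_t>0\}$ for every $p<2$, while for $p=2$ one gets $\langle M\rangle$; in every regime above, Theorem \ref{main thm} or Theorem \ref{critical case H=1/2 thm} produces a value of $p\in[1,2]$ and a statement about $\langle X\rangle^{(p)}$ (infinite, or a strictly positive constant times $t$, or zero) that is incompatible with this dichotomy — specifically, $1/(H\wedge K)$ is either strictly below $2$ (where $X$ has a nontrivial positive $p$-variation for some $p<2$, impossible for $BV$, forcing a martingale part, which then forces infinite $p$-variation for \emph{all} $p<2$, but Theorem \ref{main thm} gives a \emph{finite} strictly positive value, contradiction) or equal to $2$ (where \eqref{critical case H=1/2 eqn} shows the quadratic variation is infinite, impossible for a semimartingale).

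The main obstacle is the bookkeeping across the critical boundary $H\wedge K=1/2$, where the $p$-th variation is finite, nonzero, and linear in $t$: this is precisely the signature of a continuous local martingale, so the non-semimartingale property in that regime is \emph{not} a soft consequence of $p$-variation alone. I expect one must use a finer structural property of $X$ — most naturally, the highly nonstationary, self-referential structure of its increments (the relation $X(t)=B(\{t\})+\alpha X(\{bt\})$ type scaling), or the explicit representation of the limiting $p$-variation as a mixture over Wiener integrals noted after Theorem \ref{main thm} — to argue that no adapted filtration can make $X$ a semimartingale. Concretely, I would try to show that the conditional variance of the $b$-adic increments of $X$, given the natural filtration, does not have the $O(b^{-n})$ behaviour that a continuous semimartingale's increments must exhibit; the Weierstra\ss\ convolution injects contributions at all scales $b^{-m}$, $m\le n$, into a single $b^{-n}$-increment, which is the mechanism destroying the semimartingale property even when the \emph{aggregate} quadratic variation happens to be finite and linear. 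This scale-mixing argument, rather than the $p$-variation dichotomy, is where the real work of the proof lies.
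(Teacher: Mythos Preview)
Your reduction is correct and matches the paper: for $H\wedge K\neq 1/2$ (and $H\neq K$) the $p$-variation dichotomy coming from Theorem~\ref{main thm} rules out the semimartingale property, and for $H=K=1/2$ the infinite quadratic variation from Theorem~\ref{critical case H=1/2 thm} does so directly. You also correctly isolate the two genuinely hard subcases, $K=1/2<H$ and $H=1/2<K$, where $X$ has finite, strictly positive, linear-in-$t$ quadratic variation along $b$-adic partitions---exactly compatible with being a continuous semimartingale---and you are right that $p$-variation alone cannot close these cases.

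The gap is that you do not actually prove either of these two subcases, and the paper's arguments are quite different from (and more concrete than) your conditional-variance sketch. For $K=1/2<H$ the paper argues as follows: suppose $X=M+A$; since $X$ is Gaussian, Stricker's theorem forces $M$ to be a Gaussian martingale with independent increments, so the random variable $V$ in $\langle X\rangle_t=Vt$ must be a deterministic constant $c>0$, and then $c^{-1/2}M$ is a Brownian motion by L\'evy's characterization. Taylor's theorem gives $v_\Psi(c^{-1/2}M)=1$ a.s.\ for $\Psi(x)=x^2/(2\log\log x^{-1})$, while $v_\Psi(A)=0$ since $A$ is of bounded variation, whence $v_\Psi(c^{-1/2}X)=1$. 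But by Proposition~\ref{Hoelder prop} the paths of $X$ are H\"older continuous with exponent $K=1/2$, which forces $v_\Psi(X)=0$---contradiction. The key idea you did not anticipate is to pass to a variation functional \emph{finer} than quadratic variation, where Brownian motion and a H\"older-$1/2$ process behave differently.

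For $H=1/2<K$ the paper's route is closer in spirit to your last paragraph but still different in substance: it shows that $X$ is not a quasi-Dirichlet process in the sense of Russo--Tudor by establishing
\[
\liminf_{n}\sum_{j=0}^{b^n-1}\bE\big[\bE[\Lambda^n_{j+1}\mid\cF_{t_j}]^2\big]>0,\qquad \Lambda^n_j:=X(jb^{-n})-X((j-1)b^{-n}).
\]
This is obtained by bounding $\bE[\Lambda^n_{j+1}\mid\cF_{t_j}]^2$ from below via the explicit covariance $\bE[\Lambda^n_{j+1}\Lambda^n_{j+1-b^{n-1}}]$, together with two-sided quasi-helix estimates $\bE[(X(t)-X(s))^2]\asymp|t-s|$ (Lemmas~\ref{MC} and~\ref{covariance lower bound lemma}). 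So the relevant quantity is the $L^2$-norm of the conditional \emph{expectation}, bounded \emph{below}, and the proof is a direct covariance computation exploiting the self-similar structure $\fp{t_jb^m}=\fp{(t_j-b^{-1})b^m}$ rather than a general scale-mixing heuristic.
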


Let us discuss another aspect of \Cref{critical case H=1/2 thm}.
The convergence of the rescaled quadratic variations in 
\eqref{critical case H=1/2 eqn} can be regarded as a Gladyshev-type theorem for the Wiener--Weierstra\ss\ bridge. As in the original work by Gladyshev \cite{Gladyshev}, for the derivation of such results it is commonly assumed that the covariance function 
\begin{equation*}
c(s,t):=\text{\rm cov}(X(s),X(t))
\end{equation*}
of the  stochastic process $X$ satisfies certain differentiability conditions; see also \cite{KleinGine}.  However, the following result states that the covariance function of the fractional Wiener--Weierstra\ss\ bridge is often itself a fractal function; see Figure \ref{cov fig} for an illustration. For the particular case $H=1/2=K$ investigated in \Cref{critical case H=1/2 thm}, the following result implies in particular that $t\mapsto c(1/2,t)$ is a nowhere differentiable Takagi--van der Waerden function. Thus, \Cref{critical case H=1/2 thm} might also be interesting as a case study for Gladyshev-type theorems without smoothness assumptions. 

\begin{proposition}\label{covthm}
Suppose that $B$ is the standard fractional Brownian bridge with $\kappa$ given by \eqref{standard kappa} and that $K={-\log_b\alpha}< (2H)\wedge 1$. Then, for all $s\in(0,1)$,  the covariance function $c(s,t)=\text{\rm cov}(X(s),X(t))$ is such that $t\mapsto c(s,t)$ has finite, nonzero, and linear $(1/K)^{\text{th}}$ variation.  Moreover, if $b$ is even,  $H=1/2$, and $s=1/2$, then the function $t\mapsto 2c(1/2,t)$ is the Takagi--van der Waerden function  with parameters $b$ and  $\alpha$, that is, the function in \eqref{f intro eq} for the tent map $\phi(t)=t\wedge(1-t)$.
\end{proposition}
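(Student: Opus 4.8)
The plan is to compute $c(s,t)$ explicitly in terms of the covariance structure of the underlying fractional Brownian motion $W$, and then recognize the resulting function of $t$ as a Weierstrass-type convolution to which the earlier deterministic results apply. First I would write, using $B(u)=W(u)-\kappa(u)W(1)$ and the double sum defining $X$,
\begin{equation*}
c(s,t)=\sum_{m,n=0}^\infty\alpha^{m+n}\,\mathrm{cov}\big(B(\{b^m s\}),B(\{b^n t\})\big),
\end{equation*}
and observe that $\mathrm{cov}(B(u),B(v))=\rho_H(u,v)$ for an explicit bilinear-in-$\rho_H$ expression, where $\rho_H(u,v)=\frac12(u^{2H}+v^{2H}-|u-v|^{2H})$. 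The key structural point is that for fixed $s$ and fixed $m$, the map $t\mapsto \mathrm{cov}(B(\{b^m s\}),B(\{b^n t\}))$, after summing over $n$, should reorganize into $\sum_{n\ge0}\alpha^n\psi_{m,s}(\{b^n t\})$ for a H\"older-continuous, $1$-periodic function $\psi_{m,s}$ with $\psi_{m,s}(0)=\psi_{m,s}(1)$. Then $c(s,\cdot)$ is itself of the form \eqref{f intro eq} with base function $\phi=\sum_m\alpha^m\psi_{m,s}$, provided the regularity of $\phi$ can be controlled uniformly in $m$.

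The H\"older regularity of $\phi$ is the crux. Since $u\mapsto\rho_H(u,v)$ has H\"older exponent $2H\wedge 1$ (it is $C^1$ when $H>1/2$, and exactly $2H$-H\"older when $H\le1/2$, with a cusp at $u=v$), each $\psi_{m,s}$ is $(2H\wedge 1)$-H\"older, and the $\alpha^m$ weights give summability of the $m$-series in the appropriate H\"older norm as long as $\alpha$ is small enough — but here one must be careful because the relevant statement needs $\phi$ to be $\theta$-H\"older for some $\theta>K$. Given the hypothesis $K=-\log_b\alpha<(2H)\wedge1$, one has $\alpha b^\theta<1$ for some $\theta\in(K,(2H)\wedge1)$, which is exactly what makes the convolution series for $c(s,\cdot)$ converge in $C^\theta$ and, crucially, what lets me invoke the deterministic results of \Cref{Hoelder section}: those results (the H\"older analogue of Theorem 2.1 of \cite{SchiedZZhang}) should give that a function of the form \eqref{f intro eq} with $\phi$ being $\theta$-H\"older, $\theta>K$, and $\phi$ not too degenerate has finite, nonzero, linear $(1/K)$-th variation. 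I would need to check the nondegeneracy condition on $\phi=\sum_m\alpha^m\psi_{m,s}$ — i.e. that it is not one of the exceptional functions whose $(1/K)$-th variation vanishes — which should follow by isolating the $m=0$ term $\psi_{0,s}(u)=\mathrm{cov}(B(\{s\}),B(u))=\rho_H(s,u)-\kappa(s)\rho_H(1,u)-\ldots$ and arguing it genuinely oscillates.

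For the second, more concrete assertion, I would specialize: $H=1/2$, $s=1/2$, $b$ even. Then $W$ is Brownian motion, $\rho_{1/2}(u,v)=u\wedge v$, and with $\kappa(t)=t$ the bridge covariance is the familiar $\mathrm{cov}(B(u),B(v))=(u\wedge v)-uv$. A direct computation of $\mathrm{cov}(B(1/2),B(v))$ gives $\tfrac12 v$ for $v\le1/2$ and $\tfrac12(1-v)$ for $v\ge1/2$, i.e. exactly $\tfrac12$ times the tent map $\phi(v)=v\wedge(1-v)$ — here I use that $b$ even makes $\{b^n/2\}=0$ for $n\ge1$, so only the $m=0$ term in the outer sum survives. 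Hence $c(1/2,t)=\sum_{n\ge0}\alpha^n\,\tfrac12\,\phi(\{b^n t\})$, which is precisely $\tfrac12$ times the Takagi–van der Waerden function with parameters $b$ and $\alpha$. The main obstacle overall is the first part: verifying that the bilinear reorganization really does produce a single Weierstrass-type convolution in $t$ with a base function of the required H\"older regularity and nondegeneracy, so that the deterministic machinery of \Cref{Hoelder section} applies cleanly; the Brownian special case is then essentially a one-line check.
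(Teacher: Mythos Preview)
Your approach is essentially the same as the paper's: both represent $c(s,\cdot)$ as a Weierstra\ss\ convolution $\sum_{n\ge0}\alpha^n\phi(\{b^nt\})$ with base function $\phi(t)=\sum_{m\ge0}\alpha^m\mathbb{E}[B(\{b^ms\})B(t)]$, verify that $\phi$ is $(2H)\wedge1$-H\"older, and apply \Cref{p variation prop}. The paper derives this representation slightly more slickly via the telescoping identity $\phi(t)=c(s,t)-\alpha c(s,\{bt\})$, but the resulting $\phi$ is the same as yours. Your Takagi--van der Waerden computation is correct and matches the paper's argument.

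Two points to tighten. First, your inequality is backwards: from $K=-\log_b\alpha<(2H)\wedge1$ one gets $\alpha b^\theta>1$ for $\theta\in(K,(2H)\wedge1]$, not $\alpha b^\theta<1$; it is precisely $\alpha b^\gamma>1$ (with $\gamma=(2H)\wedge1$ the H\"older exponent of $\phi$) that is the hypothesis of \Cref{p variation prop}. The convergence of the $m$-series defining $\phi$ in $C^{(2H)\wedge1}$ requires only $\alpha<1$, since each $\psi_{m,s}(v)=\rho_H(\{b^ms\},v)-\kappa(\{b^ms\})\kappa(v)$ has a $(2H)\wedge1$-H\"older constant bounded uniformly in $m$. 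Second, your nondegeneracy check (``genuinely oscillates'') is too vague for the machinery: \Cref{p variation prop} gives nontriviality under the explicit sign condition \eqref{cond}, and the paper verifies this by noting that for the standard fractional Brownian bridge, $t\mapsto\mathbb{E}[B(u)B(t)]$ is nonnegative and vanishes only at $t\in\{0,1\}$, so that $\phi(b^{-k})>0$ for all $k$.
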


\begin{figure}[h]
\begin{center}
\includegraphics[width=7.5cm]{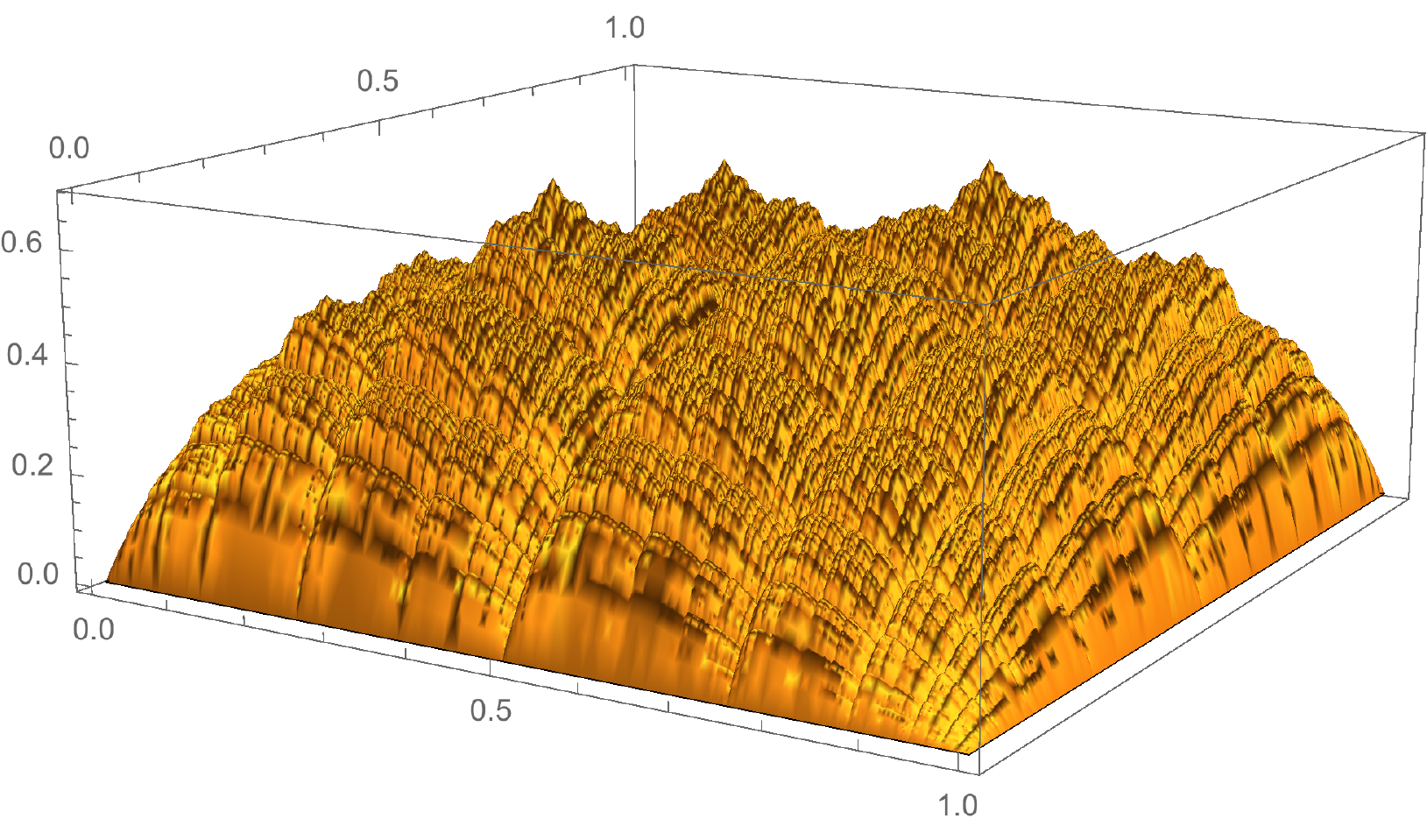}\qquad\qquad \includegraphics[width=7.5cm]{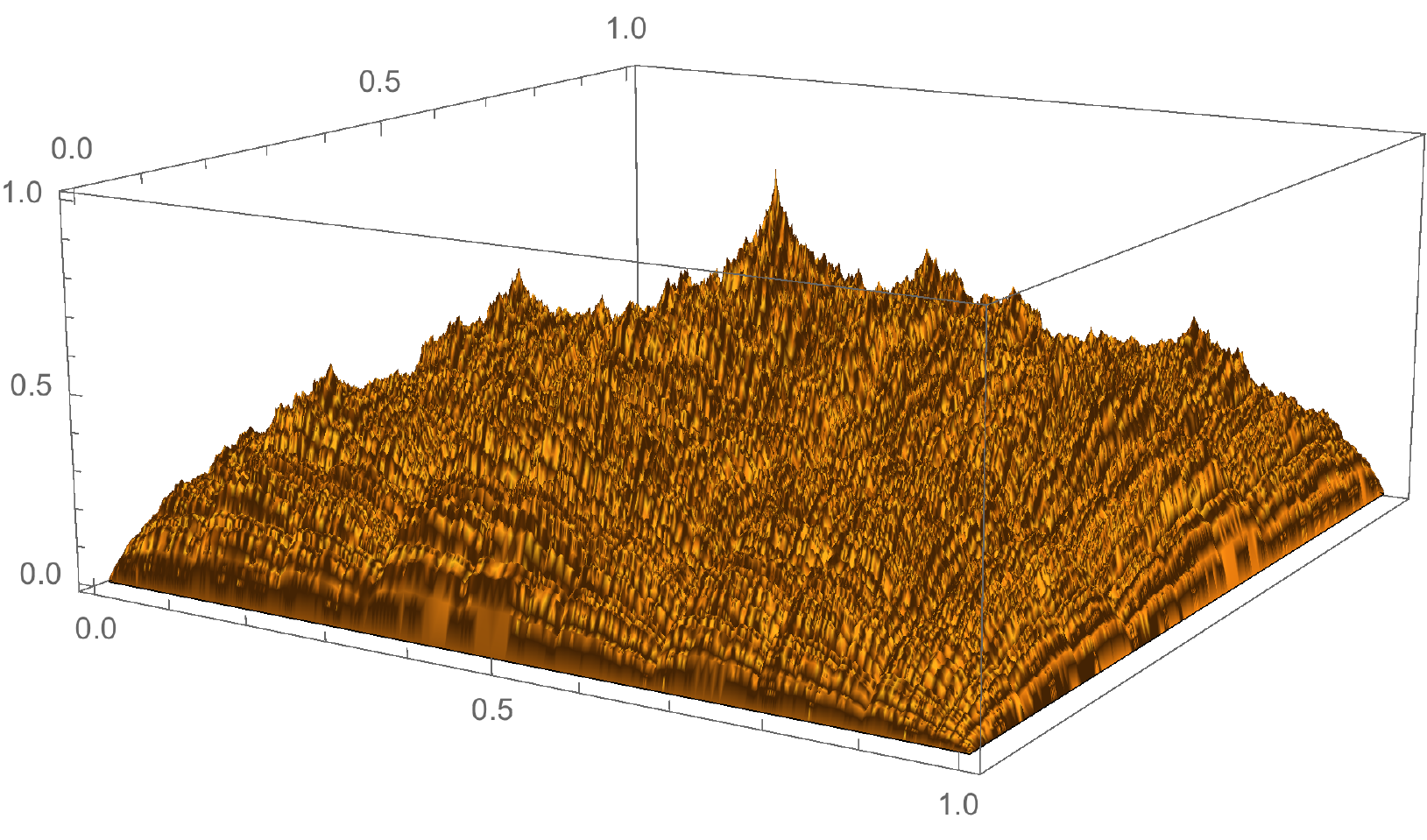}
\end{center}
\caption{Covariance functions of the Wiener--Weierstra\ss\ bridge for $H=1/2$, $\alpha=1/2$, and $b=2$ (left) and $b=3$ (right).}
\label{cov fig} \end{figure}

A remarkable consequence of  Theorem \ref{main thm} and Proposition \ref{covthm}  is that for every $K\in(0,1)$ there is a Gaussian process whose sample paths and covariance function both admit the roughness exponent $K$.  On the other hand, there exists no centered Gaussian process whose sample paths have a strictly lower roughness exponent than its covariance function. A precise statement of these facts is given in the following corollary.

 \begin{corollary}\label{co}
Suppose that $p>1$ and $b\in\{2,3,\dots\}$. \begin{enumerate}
\item There exists a centered Gaussian process $Y$ indexed by $[0,1]$ whose sample paths have $\bP$-a.s.~finite, nontrivial, and linear $p$-th variation and, for any $s\in(0,1)$, the covariance $t\mapsto c(s,t):=\bE[Y(s)Y(t)]$ has finite, nontrivial, and linear  $p$-th variation.
\item Suppose that $Y$ is any centered Gaussian process  indexed by $[0,1]$ whose sample paths have $\bP$-a.s.~finite (though not necessarily nonzero) $p$-th variation.  Then, for every $s\in[0,1]$, 
\begin{equation}\label{cov pth var eq}
\limsup_{n\ua\infty}\sum_{k=0}^{b^n-1}\Big|c\big(s,(k+1)b^{-n}\big)-c\big(s,kb^{-n}\big)\Big|^p<\infty.\end{equation}
\end{enumerate}
\end{corollary}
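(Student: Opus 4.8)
For part (a), the plan is to \emph{construct} such a process explicitly by feeding the right parameters into the earlier results. Given $p>1$, set $K:=1/p\in(0,1)$ and choose $\alpha\in(0,1)$ and $b$ so that $\alpha=b^{-K}$, i.e.\ $-\log_b\alpha=K<1$; to make the covariance part of the statement available we also want $K<(2H)\wedge 1$, so we pick $H\in(K/2\vee\text{(anything)},1)$ large enough, say $H:=(K+1)/2$, which gives $2H=K+1>K$ and $H>K$. Then $Y:=X$, the Wiener--Weierstra\ss\ bridge with these parameters and with $\kappa$ as in \eqref{standard kappa}, is a centered Gaussian process. By \Cref{main thm}(a) (the regime $H>K$), $\bP$-a.s.\ $\<X\>^{(1/K)}_t=V\cdot t$ with $V$ finite and strictly positive, so the sample paths have finite, nontrivial, linear $p$-th variation. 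By \Cref{covthm}, since $K=-\log_b\alpha<(2H)\wedge1$, for every $s\in(0,1)$ the map $t\mapsto c(s,t)$ has finite, nonzero, linear $(1/K)^{\text{th}}$ variation, which is exactly the claimed $p$-th variation property. This settles (a); essentially it is just a matter of bookkeeping to check all the inequalities are simultaneously satisfiable, which they are.

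For part (b), the goal is to bound the $p$-th variation of $t\mapsto c(s,t)$ by that of the sample paths. The key observation is that for a centered Gaussian process, $c(s,(k+1)b^{-n})-c(s,kb^{-n})=\bE\big[Y(s)\,\big(Y((k+1)b^{-n})-Y(kb^{-n})\big)\big]$, so by Cauchy--Schwarz each increment of $c(s,\cdot)$ is bounded by $\|Y(s)\|_{L^2}$ times the $L^2$-norm of the corresponding increment of $Y$. Raising to the $p$-th power and summing over $k$,
\begin{equation*}
\sum_{k=0}^{b^n-1}\big|c(s,(k+1)b^{-n})-c(s,kb^{-n})\big|^p\le \|Y(s)\|_{L^2}^p\sum_{k=0}^{b^n-1}\bE\big[\big(Y((k+1)b^{-n})-Y(kb^{-n})\big)^2\big]^{p/2}.
\end{equation*}
If $p\ge 2$, then $x\mapsto x^{p/2}$ is convex, and by Jensen's inequality applied to the increment (or directly by $\bE[Z^2]^{p/2}\le \bE[|Z|^p]$ via Lyapunov) each summand is at most $\bE\big[|Y((k+1)b^{-n})-Y(kb^{-n})|^p\big]$. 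Hence the right-hand side is bounded by $\|Y(s)\|_{L^2}^p\,\bE\big[\sum_k |Y((k+1)b^{-n})-Y(kb^{-n})|^p\big]$, and one wants to pass the $\limsup_n$ inside the expectation. If $1<p<2$ the inequality $\bE[Z^2]^{p/2}\le\bE[|Z|^p]$ fails pointwise, and one instead uses $\bE[Z^2]^{p/2}=\|Z\|_{L^2}^p\le \|Z\|_{L^p}^p\cdot(\text{const})$? — that is false in general, so here the correct route is Gaussian hypercontractivity: for a centered Gaussian $Z$, all $L^q$ norms are comparable, $\|Z\|_{L^2}\le C_p\|Z\|_{L^p}$ with $C_p$ depending only on $p$, so $\bE[Z^2]^{p/2}\le C_p^p\,\bE[|Z|^p]$ still holds with a universal constant. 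This repairs the $1<p<2$ case and makes the bound uniform.

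The remaining, and genuinely delicate, step is the interchange of $\limsup_n$ and $\bE$: we have shown
\begin{equation*}
\sum_{k=0}^{b^n-1}\big|c(s,(k+1)b^{-n})-c(s,kb^{-n})\big|^p\le C_p^p\,\|Y(s)\|_{L^2}^p\;\bE\Big[\sum_{k=0}^{b^n-1}\big|Y((k+1)b^{-n})-Y(kb^{-n})\big|^p\Big],
\end{equation*}
and we are told only that $S_n:=\sum_k |Y((k+1)b^{-n})-Y(kb^{-n})|^p$ converges $\bP$-a.s.\ to a finite limit, \emph{not} that $\sup_n\bE[S_n]<\infty$. The plan is to obtain the latter as a consequence of the former via a Fatou/uniform-integrability argument tailored to Gaussian chaos: the $S_n$ live in a fixed Wiener chaos of order $\lceil p\rceil$ (being sums of $p$-th powers of jointly Gaussian increments), and on a fixed chaos $L^r$-norms are all equivalent, so $\bP$-a.s.\ convergence of $S_n$ together with, say, tightness forces boundedness of $\bE[S_n]$; alternatively one argues that if $\bE[S_n]\to\infty$ along a subsequence then, by the Paley--Zygmund inequality and chaos equivalence of moments, $S_n$ could not stay bounded in probability, contradicting a.s.\ convergence. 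I expect this chaos-based uniform-integrability argument to be the main obstacle, since it is where the ``finite but possibly zero'' hypothesis has to be leveraged carefully; once $\limsup_n\bE[S_n]<\infty$ is in hand, combining it with the displayed inequality yields \eqref{cov pth var eq} immediately, and the $s\in\{0,1\}$ boundary cases are trivial because $c(0,\cdot)=c(1,\cdot)\equiv0$ for a bridge.
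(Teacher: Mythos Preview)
Part (a) is correct and matches the paper's construction. For part (b), your initial bound works but is roundabout; the paper applies H\"older's inequality with conjugate exponents $p,q$ directly to $c(s,t')-c(s,t)=\bE[Y(s)(Y(t')-Y(t))]$, obtaining $\sum_k|c(s,t_{k+1})-c(s,t_k)|^p\le(\bE[|Y(s)|^q])^{p/q}\,\bE[S_n]$ in one step, with no case distinction on $p$ and no appeal to Gaussian moment equivalence.

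The substantive gap is in your control of $\limsup_n\bE[S_n]$. Your claim that $S_n=\sum_k|Y(t_{k+1})-Y(t_k)|^p$ lies in a fixed Wiener chaos of order $\lceil p\rceil$ is false unless $p$ is an even integer: for general $p$, the map $z\mapsto|z|^p$ is not a polynomial, so $|Z|^p$ has nonzero projection onto every even chaos and the finite-chaos hypercontractivity you invoke does not apply. The paper sidesteps this entirely: since $S_n$ converges $\bP$-a.s., the quantity $N(Y):=\sup_n S_n^{1/p}$ is a.s.\ finite, and as $N$ is a measurable seminorm on the Gaussian process $Y$, Fernique's theorem gives $\bE[e^{\lambda N(Y)^2}]<\infty$ for some $\lambda>0$, whence $\sup_n\bE[S_n]\le\bE[N(Y)^p]<\infty$. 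Your Paley--Zygmund alternative can in fact be salvaged without any chaos structure---the bound $\bE[S_n^2]\le C_p\,\bE[S_n]^2$ follows from Cauchy--Schwarz together with the single-variable identity $\bE[|Z|^{2p}]=(m_{2p}/m_p^2)\,\bE[|Z|^p]^2$ for centered Gaussians---but the justification you wrote is not the right one. A minor point: your closing remark that $s\in\{0,1\}$ is trivial ``for a bridge'' is misplaced, since part (b) concerns an arbitrary centered Gaussian process $Y$, not a bridge.
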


\begin{remark}We will see in \Cref{Hoelder prop} that the sample paths of the fractional Wiener--Weierstra\ss\ bridge are $\bP$-a.s.~H\"older continuous with exponent $K$ for $K<H$. If $K\ge H$, then the trajectories are $\bP$-a.s.~H\"older continuous with exponent $\gamma$ for every 
$\gamma<H$. 
\end{remark}

Weierstra\ss\ bridges are not limited to fractional Brownian bridges. They can be defined and studied for a large class of underlying bridges. In the sequel, we are going to illustrate this by the Gaussian bridges arising from a continuous Gaussian martingale $M=(M(t))_{t\in[0,1]}$ with $M(0)=0$ and $\text{var}(M(1))=\bE[(M(1))^2]>0$. When letting 
$$\kappa(t):=\frac{\text{cov}(M(t),M(1))}{\text{var}(M(1))},
$$
 the law of the Gaussian bridge
$$B(t)=M(t)-\kappa(t)M(1),\qquad 0\le t\le1,
$$
is (at least informally) equal to the distribution of $M$ conditional on $\{M(1)=0\}$ (by~\cite{GasbarraSottinenValkeila}, this holds for any continuous Gaussian process; \eqref{standard kappa} is an example). Since Gaussian martingales have deterministic quadratic variation,  $\kappa$ can also be represented as follows,
$$\kappa(t)=\frac{\<M\>_t}{\<M\>_1}.
$$
Now we take $\alpha\in(0,1)$ and $b\in\{2,3,\dots\}$ and let 
\begin{equation}\label{Gaussian Weierstrass bridge eq}
X(t):=
\sum_{n=0}^\infty \alpha^nB(\fp{b^n t}),\qquad 0\le t\le 1.
\end{equation}
The process $X$ will be called the \emph{Gaussian Weierstra\ss\ bridge} associated with $B$ and with parameters $\alpha$ and $b$.

\begin{proposition}\label{Gaussian martingale bridge prop}Suppose that  $\<M\>_t=\int_0^t\varphi(s)\,ds$ for a bounded measurable function $\varphi:[0,1]\to[0,\infty)$ for which there exists a nonempty open interval $I\subset[0,1]$ such that $\varphi>0$ on $I$. Suppose moreover that $K={-\log_b\alpha}<1/2$.  Then the corresponding Gaussian Weierstra\ss\ bridge \eqref{Gaussian Weierstrass bridge eq} has finite, nonzero, and linear $(1/K)$-th variation along the $b$-adic partitions.
\end{proposition}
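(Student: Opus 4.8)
The plan is to reduce the statement to a deterministic fact about Weierstra\ss-type functions with a H\"older profile, to obtain existence, finiteness and linearity of the $(1/K)$-th variation via a base-$b$ digit decomposition of the $b$-adic increments, and to treat positivity of the limit separately using the Gaussian structure of $M$. For the reduction: since $\varphi$ is bounded, $\bE[(M(t)-M(s))^2]=\int_s^t\varphi(r)\,dr\le\|\varphi\|_\infty|t-s|$, so by Kolmogorov--Chentsov the centered Gaussian process $M$ is $\bP$-a.s.\ $\gamma$-H\"older for every $\gamma<1/2$; as $\kappa(t)=\<M\>_t/\<M\>_1$ is Lipschitz, $B=M-\kappa M(1)$ is $\bP$-a.s.\ $\gamma$-H\"older for every $\gamma<1/2$, with $B(0)=B(1)=0$. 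Because $K<1/2$ we may fix $\gamma\in(K,1/2)$. It then suffices to show: for a deterministic continuous $\phi:[0,1]\to\bR$ with $\phi(0)=\phi(1)$ that is $\gamma$-H\"older with $\gamma>K$, the function $f(t):=\sum_{n\ge0}\alpha^n\phi(\fp{b^nt})$ has a finite and linear $(1/K)$-th variation along the $b$-adic partitions, which moreover is nonzero when $\varphi>0$ on $I$ (the last point being the only place the probabilistic input enters). This deterministic part is of the type developed in \Cref{Hoelder section}; I indicate the argument.

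Let $\wt\phi$ be the $1$-periodic extension of $\phi$. Writing $\wt\phi(b^jkb^{-n})=\phi\big((k\bmod b^{\,n-j})b^{-(n-j)}\big)$ and using $\phi(0)=\phi(1)$ to absorb the wrap-around at integer arguments, one gets the increment formula
\[
f\big((k+1)b^{-n}\big)-f\big(kb^{-n}\big)=\alpha^n\sum_{\ell=1}^{n}\alpha^{-\ell}\,d_{\ell,\,k\bmod b^\ell},\qquad d_{\ell,i}:=\phi\big((i+1)b^{-\ell}\big)-\phi\big(ib^{-\ell}\big).
\]
Since $\alpha^{1/K}=b^{-1}$, the level-$n$ sum of $|\,\cdot\,|^{1/K}$ over $\{0,\dots,\lfloor tb^n\rfloor\}$ equals $b^{-n}\sum_{k=0}^{\lfloor tb^n\rfloor}\big|\sum_{\ell=1}^{n}\alpha^{-\ell}d_{\ell,k\bmod b^\ell}\big|^{1/K}$. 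Encode $k$ by its base-$b$ digits $(e_0,\dots,e_{n-1})$, which are i.i.d.\ uniform when $k$ is uniform; then $k\bmod b^\ell$ depends only on $(e_0,\dots,e_{\ell-1})$, whereas $\{k\le\lfloor tb^n\rfloor\}$ depends, with overwhelming probability, on the few top digits and has probability $\to t$. The H\"older bound $|d_{\ell,i}|\le Cb^{-\gamma\ell}$ together with $\alpha b^{\gamma}=b^{\gamma-K}>1$ shows that $\sum_{\ell=1}^n\alpha^{-\ell}d_{\ell,(e_0\cdots e_{\ell-1})}$ converges, as $n\to\infty$, uniformly in the digits, to $S_\infty:=\sum_{\ell\ge1}\alpha^{-\ell}d_{\ell,(E_0\cdots E_{\ell-1})}$, with a bound independent of the digits; since the truncation event is asymptotically independent of the bottom digits that govern $S_\infty$, dominated convergence yields $\<f\>^{(1/K)}_t=V\cdot t$ for all $t$, with $V:=\bE\big[|S_\infty|^{1/K}\big]\in[0,\infty)$. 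This settles existence, finiteness and linearity simultaneously. (Linearity alone also follows from $f(t)=\sum_{j<m}\alpha^j\wt\phi(b^jt)+\alpha^mf(\fp{b^mt})$, the first summand being $\gamma$-H\"older with $\gamma>K$ and hence of vanishing $(1/K)$-th variation, so removable from the increments.)

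The remaining, and main, point is $V>0$ $\bP$-a.s. Freeze $M$ and condition on the digits $E$. Then $S_\infty^{(E)}=\sum_{\ell\ge1}\alpha^{-\ell}d_{\ell,(E_0\cdots E_{\ell-1})}$ is an $L^2$-limit of increments of $M$, hence centered Gaussian; in fact $S_\infty^{(E)}=\int_0^1 g_E(s)\,dM(s)$, with $g_E=\sum_{\ell\ge1}\alpha^{-\ell}\Ind{[u_\ell,\,u_\ell+b^{-\ell})}-c(E)$, where $u_\ell:=b^{-\ell}\sum_{i<\ell}E_ib^i$ and $c(E):=\sum_{\ell\ge1}\alpha^{-\ell}\big(\kappa(u_\ell+b^{-\ell})-\kappa(u_\ell)\big)$ depend on $E$ and on the deterministic $\kappa$ only. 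Hence $\var\big(S_\infty^{(E)}\big)=\int_0^1 g_E(s)^2\varphi(s)\,ds=:\sigma_E^2$ is deterministic, and a centered Gaussian with $\sigma_E>0$ has no atom at $0$, so by Fubini
\[
\bP(V=0)=\bP\big(S_\infty^{(E)}=0\text{ for }\bP_E\text{-a.e.\ }E\big)\le\bE_E\big[\bP\big(S_\infty^{(E)}=0\big)\big]=\bP_E(\sigma_E=0).
\]
Thus it suffices that $\sigma_E>0$ for $\bP_E$-a.e.\ $E$, and since $\varphi>0$ on $I$ this holds as soon as $g_E$ fails to be a.e.\ constant on $I$ for a.e.\ $E$. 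Here one exploits the fractal structure of $g_E$: $I$ contains a $b$-adic interval $J$ of some generation $r$; the ``bumps'' $\alpha^{-\ell}\Ind{[u_\ell,u_\ell+b^{-\ell})}$ with $\ell\le r$ are $b$-adic intervals of generation $\le r$, each either containing $J$ or disjoint from it, hence constant on $J$; and with positive conditional probability the bump of generation $\ell>r$ falls strictly inside $J$, these events being independent enough across $\ell$, so $\bP_E$-a.s.\ infinitely many bumps lie strictly inside $J$; the one of largest generation occurring in $J$ has width below $|J|$ and an edge at a $b$-adic point of its own generation that no coarser bump can cancel, forcing $g_E$ to jump inside $J$. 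Making this cancellation analysis precise — and ruling out the degenerate alternative that no small bump ever meets $\bar I$, which has probability zero as an intersection of infinitely many events with probabilities bounded away from $1$ — is the technical heart of the proof; it gives $\sigma_E>0$ $\bP_E$-a.s., hence $\bP(V=0)=0$. Applying the deterministic statement to $\phi=B(\cdot,\omega)$, which is $\bP$-a.s.\ admissible ($B$ is $\bP$-a.s.\ $\gamma$-H\"older with $B(0)=B(1)=0$, and $\varphi>0$ on $I$), we conclude that the Gaussian Weierstra\ss\ bridge \eqref{Gaussian Weierstrass bridge eq} has $\bP$-a.s.\ finite, nonzero, and linear $(1/K)$-th variation along the $b$-adic partitions.
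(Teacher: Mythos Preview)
Your argument follows essentially the same route as the paper's: reduce to the deterministic $p$-th variation result for H\"older base functions (the paper's \Cref{p variation prop}), represent the limiting random variable $S_\infty^{(E)}$ as a Wiener integral $\int_0^1 g_E\,dM$, and then argue that $\int_I g_E^2\varphi>0$ for $\bP_E$-a.e.\ digit sequence $E$. The paper obtains H\"older continuity of $M$ via the DDS time change rather than Kolmogorov--Chentsov, but either works.

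Your positivity step, however, is more convoluted than necessary and contains a slip: once you have shown (via Borel--Cantelli, exactly as the paper does for the density of $\{R_m b^{-m}\}$) that $\bP_E$-a.s.\ infinitely many bump intervals $[u_\ell,u_\ell+b^{-\ell})$ lie inside $J\subset I$, there is no ``one of largest generation'' to speak of, and your jump-cancellation analysis is awkward because finer bumps carry \emph{larger} amplitudes $\alpha^{-\ell}$, not smaller. The paper's shortcut avoids this entirely: since all bump indicators are nonnegative and $c(E)\le\sum_{\ell\ge1}\alpha^{-\ell}\|\varphi\|_\infty b^{-\ell}<\infty$ (using $\alpha b>1$, which follows from $K<1$), one has $g_E\ge\alpha^{-\ell}-c(E)$ on any bump interval $[u_\ell,u_\ell+b^{-\ell})\subset I$, and this is strictly positive for $\ell$ large. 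Hence $\int_I g_E^2\varphi>0$ directly, without any discussion of jumps or edge cancellation.
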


\paragraph{Outlook and some open questions.} Let us conclude this section by pointing out some interesting open questions and directions for future research that are beyond the scope of this paper. 
\begin{enumerate}[{\rm 1.}]
\item \Cref{V figure} suggests that the random variable $V$ in part (a) of \Cref{main thm}  is a nondegenerate random variable with nonzero variance. Establishing this claim would provide a distinctive feature between the two regimes $H<K$ and $H>K$.
\item \Cref{critical case H=1/2 thm} analyzes only the case $H=1/2$, and it would be interesting to obtain a similar result for all  $H\in(0,1)$. Based on the argument used to establish the convergence of expectation in \Cref{proof of critical case H=1/2 thm section}, we expect that  the following convergence holds for arbitrary $H\in(0,1)$ and  $\bP$-a.s.~for all $t\in[0,1]$,
\begin{equation*}
\lim_{n\ua\infty}\frac1{n^{1/(2H)}} \sum_{k=0}^{\lfloor tb^n\rfloor}\big|X((k+1)b^{-n})-X(kb^{-n})\big|^{1/H}= \frac{2^{1/(2H)}\Gamma(\frac{H+1}{2H})}{\sqrt{\pi}}\cdot t.
\end{equation*}
\item \Cref{main thm}  shows that the sample paths of $X$ have finite, nonzero, and linear $(H\wedge K)^{-1}$-th variation if $H\neq K$. \Cref{critical case H=1/2 thm}, on the other hand, shows that this relation breaks down in the critical case $H=K$. A similar phenomenon appears already in the deterministic case \eqref{f intro eq} with Lipschitz continuous $\phi$. In this case, $H=1$ and we pick  $\alpha=1/2$ and $b=2$ in \eqref{f intro eq}, so that also $K=1$. Yet, the corresponding function \eqref{f intro eq} is typically not of bounded variation and, in the case where $\phi$ is the tent map, has been used as a classical example of a nowhere differentiable function \cite{Takagi}. It was shown in \cite{HanSchiedZhang1} that Weierstra\ss\ and Takagi--van der Waerden functions with critical roughness possess finite, nonzero, and linear $\Phi$-variation  for the function $\Phi(x)=x(-\log x)^{-1/2}$, where $\Phi$-variation is understood in the Wiener--Young sense and taken along the $b$-adic partitions. We expect that a similar effect is in play for fractional Wiener--Weierstra\ss\ bridges and conjecture that for $H=K$ and  $\Phi(x)=x^{1/H}(-\log x)^{-1/(2H)}$,  $\bP$-a.s.~for all $t\in[0,1]$,
 \begin{equation*}
 \lim_{n\ua\infty}\sum_{k=0}^{\lfloor tb^n\rfloor}\Phi\big(\big|X((k+1)b^{-n})-X(kb^{-n})\big|\big)=\frac{2^{1/(2H)}\Gamma\big(\frac{H+1}{2H}\big)}{\sqrt\pi(-\log\alpha)^{1/(2H)}}\cdot t.
 \end{equation*}
 \item As pointed out by an anonymous referee, an interesting problem would be studying the fluctuations of the limits \eqref{qua} and \eqref{critical case H=1/2 eqn}, as a function of $n$, the number of steps taken in the $b$-adic partition. For example, we may ask whether central limit theorems and large deviation principles can be established.

 

\end{enumerate}

\section{Proofs}\label{Proofs section}

Let $X$ denote the fractional Wiener--Weierstra\ss\ bridge with parameters $\alpha$, $b$, and $H$ and recall that the underlying fractional Brownian bridge is of the form $B(t)=W(t)-\kappa(t)W(1)$ for a fractional Brownian motion $W$ with Hurst parameter $H$ and a function $\kappa:[0,1]\to [0,1]$ that is H\"older continuous with exponent $\tau\in(H,1]$ and satisfies $\kappa(0)=0$ and $\kappa(1)=1$. Moreover, the parameters $K$ and $p$ are defined as
$$K=1\wedge(-\log_b\alpha)\quad\text{and}\quad p=\frac1{(K\wedge H)}.
$$

Let us first discuss the  $p$-th variation of a general function $f$ of the form  \eqref{f intro eq}. In \eqref{pth variation},  this  $p$-th variation  is defined as the limit of the terms
\begin{align}
V_n&:=\sum_{k=0}^{b^n-1}\big|f((k+1)b^{-n})-f(kb^{-n})\big|^p=\sum_{k=0}^{b^n-1}\bigg|\sum_{m=0}^{n-1} \alpha^{m}\Big(\phi(\fp{(k+1)b^{m-n}})-\phi(\fp{kb^{m-n}})\Big)\bigg|^p\nonumber\\
&=\alpha^{np}\sum_{k=0}^{b^n-1}\bigg|\sum_{m=1}^{n} \alpha^{-m}\Big(\phi(\fp{(k+1)b^{-m}})-\phi(\fp{kb^{-m}})\Big)\bigg|^p,\label{Vn rep}
\end{align}
where we have used the assumption $\phi(0)=\phi(1)$ in the second step.
Following \cite{SchiedZZhang}, 
we now let $(\Omega_R,\mathscr{F}_R,\mathbb{P}_R)$ be a probability space supporting an independent sequence $U_1,U_2,\dots$  of random variables with 
a uniform distribution on $\{0,1,\dots, b-1\}$ and define the stochastic process
\begin{equation}\label{R and X}
R_m:=\sum_{i=1}^mU_ib^{i-1},\qquad m\in\mathbb{N}.
\end{equation}
The importance of the random variables $R_m$ and the need to have the random variables $R_m$ defined independently of our underlying Gaussian processes explain the subscript \lq$R$' in our notation $(\Omega_R,\mathscr{F}_R,\mathbb{P}_R)$.
Note that each $R_m$ has a uniform distribution on $\{0,\dots, b^m-1\}$. Moreover, \eqref{R and X}
 ensures that
\begin{equation}\label{RnRm}
\fp{b^{-m}R_n}=b^{-m}R_m\qquad\text{ for $m\le n$.}
\end{equation} 
Following \cite{SchiedZZhang}, we can now express the sum over $k$  in \eqref{Vn rep} through an expectation over $(R_m)_{1\leq m\leq n}$ and then use \eqref{RnRm} to obtain
\begin{align}
V_n&=(\alpha^pb)^n\bE_R\bigg[\bigg|\sum_{m=1}^n \alpha^{-m}(\phi((R_m+1)b^{-m})-\phi(R_mb^{-m}))\bigg|^p\bigg].\label{Vn Rm eq}
\end{align}
 
 For notational clarity, the probability space on which the fractional Brownian motion $W$ and its corresponding bridge $B$ are defined will henceforth be denoted by  $(\Omega_W,\mathscr{F}_W,\mathbb{P}_W)$. The product space of these two probability spaces will be denoted by $(\Omega,\mathscr{F},\mathbb{P})$. That is, $\Omega=\Omega_W\times \Omega_R$, $\cF=\cF_W\otimes\cF_R$, and $\bP=\bP_W\otimes\bP_R$.

\subsection{Proof of \Cref{main thm} (a)}

In this section, we will deploy \Cref{p variation prop} in \Cref{Hoelder section} to prove part (a) of \Cref{main thm}. Recall that this part addresses the situation in which $K<H$, which is equivalent to either $\alpha b^H>1$ or $\alpha^pb>1$. 

\begin{lemma}\label{fractional bridge thm lemma}
In the context of \Cref{main thm} {\rm (a)}, let $(r_m)_{m\in\bN}$ be an arbitrary sequence of integers such that $r_m\in\{0,\dots,b^m-1\}$. Then the expectation
\begin{equation}\label{sum B rm eq}
\bE_W\bigg[\bigg(\sum_{m=1}^\infty\alpha^{-m}\Big(B\Big(\frac{r_m+1}{b^m}\Big)-B\Big(\frac{r_m}{b^m}\Big)\Big)\bigg)^2\bigg]
\end{equation}
is finite and strictly positive.
\end{lemma}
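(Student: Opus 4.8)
The plan is to write the expression inside the expectation in \eqref{sum B rm eq} as a single Wiener integral with respect to $W$ and then to control its variance from above and below. Using $B(t)=W(t)-\kappa(t)W(1)$, the increment $B((r_m+1)/b^m)-B(r_m/b^m)$ is a linear combination of increments of $W$; since $W$ is a fractional Brownian motion with Hurst parameter $H$, each such increment can be represented as $\int_0^1 \psi_m(u)\,dW(u)$ for a suitable deterministic kernel $\psi_m$ (in the sense of the Wiener integral with respect to fBm, using the covariance structure of $W$). Summing over $m$ with the weights $\alpha^{-m}$, the random variable in \eqref{sum B rm eq} becomes $\int_0^1 \Psi(u)\,dW(u)$ with $\Psi=\sum_{m\ge1}\alpha^{-m}\psi_m$, provided this series converges in the relevant Hilbert space $\mathfrak H$ of integrands. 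The finiteness of \eqref{sum B rm eq} is then the statement that $\Psi\in\mathfrak H$, i.e. $\n{\Psi}_{\mathfrak H}<\infty$, and strict positivity is the statement that $\Psi\neq 0$ in $\mathfrak H$. First I would set up this Wiener-integral framework carefully and record the Hilbert-space norm estimates for increments of $W$ over $b$-adic intervals; the key quantitative input is that the length of the $m$-th interval is $b^{-m}$, so an increment of $W$ over it has $L^2$-norm of order $b^{-mH}$, and the Hölder continuity of $\kappa$ with exponent $\tau>H$ makes the $\kappa(t)W(1)$ correction terms negligible by comparison (they contribute $O(b^{-m\tau})$).

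For the \emph{finiteness} (upper bound), I would estimate $\n{\Psi}_{\mathfrak H}\le \sum_{m\ge1}\alpha^{-m}\n{\psi_m}_{\mathfrak H}$ by the triangle inequality, bound each $\n{\psi_m}_{\mathfrak H}$ by a constant times $b^{-mH}$ (plus the $O(b^{-m\tau})$ correction), and observe that the geometric series $\sum_m \alpha^{-m}b^{-mH}=\sum_m (\alpha b^H)^{-m}$ converges because we are in regime (a), where $K<H$ forces $\alpha b^H>1$, hence $(\alpha b^H)^{-1}<1$. (Equivalently one can use $\bE_W[(\sum_m \cdots)^2]^{1/2}\le \sum_m \alpha^{-m}\bE_W[(B((r_m+1)/b^m)-B(r_m/b^m))^2]^{1/2}$ directly, without invoking the abstract Wiener-integral space, and this is probably the cleaner route: each $L^2$-norm is $\le C b^{-mH}$, and summability follows as above.) This gives finiteness for \emph{every} choice of the sequence $(r_m)$, uniformly.

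For \emph{strict positivity} I would argue that the sum cannot vanish $\bP_W$-a.s. The clean way is to isolate the first term $m=1$: condition on, or rather project onto, the randomness that is "new" at scale $b^{-1}$ relative to coarser scales. More concretely, write $S:=\sum_{m\ge1}\alpha^{-m}(B((r_m+1)/b^m)-B(r_m/b^m))$; since $S$ is a (centered) Gaussian random variable, $\bE_W[S^2]=0$ would force $S=0$ a.s., which is impossible because $S$ has a nontrivial Gaussian component — for instance, the increment of $W$ over the interval $[r_1/b,(r_1+1)/b]$ has a part orthogonal (in $L^2(\bP_W)$) to all increments of $W$ over strictly finer or strictly coarser dyadic-type intervals appearing in the other terms, or one can simply note that the covariance matrix of finitely many increments of fBm over non-degenerate intervals is nonsingular. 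Thus $\operatorname{Var}(S)>0$. The main obstacle, and the part needing genuine care rather than soft arguments, is making the orthogonality/non-degeneracy argument for strict positivity fully rigorous in the presence of infinitely many overlapping terms and the $\kappa W(1)$ corrections: one must exhibit an explicit linear functional of $W$ (e.g. a suitable finite-dimensional projection, or a carefully chosen test increment) against which $S$ has nonzero inner product, so that $\operatorname{Var}(S)$ is bounded below by a strictly positive constant. I expect the upper bound to be routine and this lower bound to be where the real work lies; a convenient trick is to use that $W$ restricted to $[0,b^{-1}]$ contributes independently of $W(1)-W(b^{-1})$-type quantities only approximately for fBm, so one may instead lower-bound $\operatorname{Var}(S)$ by $\operatorname{Var}(\bE_W[S\mid \mathcal G])$ for a cleverly chosen sub-$\sigma$-field $\mathcal G$, reducing to a finite and explicitly computable Gaussian quantity.
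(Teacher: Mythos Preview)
Your upper-bound argument is correct and essentially coincides with the paper's: write the partial sums as Wiener integrals of step functions $f_n$, use the triangle inequality together with $\bE_W[(W((r_m+1)b^{-m})-W(r_mb^{-m}))^2]^{1/2}=b^{-mH}$ and the $O(b^{-m\tau})$ bound on the $\kappa$-correction, and sum the geometric series $\sum_m(\alpha b^H)^{-m}$, which converges because $\alpha b^H>1$ in regime (a).

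Your lower-bound argument, however, has a genuine gap. The suggestions you list---an orthogonal component of the $m=1$ increment, non-singularity of the covariance matrix of finitely many fBm increments, a cleverly chosen conditioning $\sigma$-field---are all plausible-sounding but none is actually carried out, and for fractional Brownian motion with $H\neq 1/2$ the first one is false as stated: the increment of $W$ over $[r_1/b,(r_1+1)/b]$ is \emph{not} orthogonal to increments over finer nested intervals, and there is no exact independence structure to exploit. Non-singularity of finite covariance matrices gives positive variance for each partial sum $S_n$ but does not prevent $\operatorname{Var}(S_n)\to 0$; producing a uniform lower bound is exactly the work you have not done. You correctly identify this as the crux, but the proposal stops at the level of ``one should be able to find a test functional,'' which is not yet a proof.

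The paper's route is cleaner and avoids all of this. It keeps track of the explicit limiting integrand
\[
f=\sum_{m=1}^\infty \alpha^{-m}\Ind{[r_m/b^m,(r_m+1)/b^m]}-\sum_{m=1}^\infty\alpha^{-m}\Delta_m\kappa,
\]
and observes that on the interval $[r_n/b^n,(r_n+1)/b^n]$ one has $f\ge \alpha^{-n}-\sum_m\alpha^{-m}|\Delta_m\kappa|$, which is strictly positive for $n$ large because $\alpha^{-n}\to\infty$ while the sum is finite. Thus $f$ is nonzero on a nonempty open interval. Strict positivity of the variance then follows from the Wiener-integral isometry, treated separately for $H=1/2$ (standard It\^o isometry), $H>1/2$ (the kernel $|u-v|^{2H-2}$ is strictly positive definite), and $H<1/2$ (the Fourier representation of the norm via the operator $M_-^H$ and Parseval). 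This reduces the probabilistic non-degeneracy question to the elementary observation that the integrand does not vanish identically---precisely the ``explicit linear functional'' you were looking for, but obtained for free from the structure of $f$.
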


\begin{proof}For $m,n\in\mathbb N$, let us define  $\Delta_m\kappa:=\kappa(\frac{r_m+1}{b^m})-\kappa(\frac{r_m}{b^m})$ and
$$
f_n:=\sum_{m=1}^n\alpha^{-m}\Ind{[\frac{r_m}{b^m},\frac{r_m+1}{b^m}]}(t) -\sum_{m=1}^n \alpha^{-m}\Delta_m\kappa.
$$
Using $B(t)=W(t)-\kappa(t)W(1)$, we get
\begin{align}
\sum_{m=1}^n\alpha^{-m}\Big(B\Big(\frac{r_m+1}{b^m}\Big)-B\Big(\frac{r_m}{b^m}\Big)\Big)
&=\sum_{m=1}^n\alpha^{-m}\Big(W\Big(\frac{r_m+1}{b^m}\Big)-W\Big(\frac{r_m}{b^m}\Big)\Big)-\sum_{m=1}^n \alpha^{-m}\Delta_m\kappa W(1)\nonumber\\
&=\int_0^1f_n(t)\,dW(t),\label{fractional bridge thm eq 1}
\end{align}
Here we have used the fact that the Wiener integral of the step function  $f_n$ with respect to the fractional Brownian motion $W$ is given by the standard Riemann-type sum
(see, e.g., p.~16 in~\cite{Mishura}). 

 Since $\kappa$ is H\"older continuous with an exponent strictly larger than $H$, our assumption $\alpha b^H>1$ implies that the series $\sum_{m=1}^\infty \alpha^{-m}|\Delta_m\kappa|$ is finite. Moreover, for $k,\,n\in\bN$ with $k<n$ and $p=1/H$,
 \begin{equation}\label{fH in L1H eq}
 \|f_n-f_k\|_{L^{p}[0,1]}\le \sum_{m=k+1}^n\alpha^{-m}b^{-mH}+ \sum_{m=k+1}^n \alpha^{-m}|\Delta_m\kappa|,
 \end{equation}
 and the right-hand side can be made arbitrarily small by making $k$ large. 
 Therefore, the sequence $(f_n)$ converges in $L^{p}[0,1]$ to 
\begin{equation}\label{fnHtofH in L1H eq}
f:=\sum_{m=1}^\infty\alpha^{-m}\Ind{[\frac{r_m}{b^m},\frac{r_m+1}{b^m}]} -\sum_{m=1}^\infty \alpha^{-m}\Delta_m\kappa.
\end{equation}
We claim next that there exists a nonempty open interval on which $f$ is nonzero. Indeed, we have
\begin{equation}\label{fH>0 eq}
 f(x)\ge \alpha^{-n}-\sum_{m=1}^\infty \alpha^{-m}|\Delta_m\kappa|\qquad\text{for }x\in\Big[\frac{r_n}{b^n},\frac{r_n+1}{b^n}\Big],
\end{equation}
 and when $n$ is sufficiently large, the right-hand side of the preceding inequality will be strictly positive.

Our next goal is to show that the Wiener integral $\int_0^1 f(t)\,dW(t)$ exists and that
\begin{equation}\label{fnHtofH eq}
\sum_{m=1}^n\alpha^{-m}\Big(B\Big(\frac{r_m+1}{b^m}\Big)-B\Big(\frac{r_m}{b^m}\Big)\Big)=\int_0^1f_n(t)\,dW(t)\longrightarrow\int_0^1 f(t)\,dW(t)\quad\text{in $L^2$.}
\end{equation}
To this end, we will separately consider the cases $H=1/2$, $1/2<H<1$, and $0<H<1/2$.

 First, we consider the case $H=1/2$. In this case, $W$ is a standard Brownian motion, and so  \eqref{fractional bridge thm eq 1}, \eqref{fnHtofH in L1H eq}, and the standard It\^o isometry yield our claim. In particular,
 \begin{equation}\label{Ito isometry aux eq}
 \bE_W\bigg[\bigg(\sum_{m=1}^\infty\alpha^{-m}\Big(B\Big(\frac{r_m+1}{b^m}\Big)-B\Big(\frac{r_m}{b^m}\Big)\Big)\bigg)^2\bigg]=\mathbb E_W\bigg[\bigg(\int_0^1 f(t)\,dW(t)\bigg)^2\bigg]=\| f\|_{L^2[0,1]}^2,
 \end{equation}
where the right-hand side is finite and strictly positive.

Next, we consider the case $1/2<H<1$. It follows from \eqref{fnHtofH in L1H eq} in conjunction with Theorem~1.9.1~(ii) and Equation (1.6.3) in~\cite{Mishura} that $\int_0^1 f(t)\,dW(t)$ exists and \eqref{fnHtofH eq} holds. In particular, the first identity in \eqref{Ito isometry aux eq} holds also in our current case $1/2<H<1$, and another application of Theorem~1.9.1~(ii) in~\cite{Mishura} yields that the expectation \eqref{sum B rm eq} is finite. To show that it is also strictly positive, 
we combine \eqref{fnHtofH eq} and
the fact that $ f\in L^{p}[0,1]$ with Lemma~1.6.6, Theorem~1.9.1~(ii), and Equation~(1.6.14) in~\cite{Mishura} so as to obtain a constant $C>0$ such that
\begin{align}
\bE_W\bigg[\bigg(\sum_{m=1}^\infty\alpha^{-m}\Big(B\Big(\frac{r_m+1}{b^m}\Big)-B\Big(\frac{r_m}{b^m}\Big)\Big)\bigg)^2\bigg]
&=C \int_0^1\int_0^1 f(u) f(v)|u-v|^{2H-2}\,du\,dv.\label{fH kernel int eq}
\end{align}
 Since $H<1$, the function $t\mapsto t^{2H-2}$ is strictly convex and decreasing on $(0,\infty)$. The integral kernel $K(u,v):=|u-v|^{2H-2}$  is hence strictly positive definite; see Proposition 2 in  \cite{ASS}. Since we have already seen that $ f(t)$ is nonzero on a nonempty open interval, the integral on the right-hand side of  
\eqref{fH kernel int eq} must hence be strictly positive.

Finally, we consider the case $0<H<1/2$. 
Recall from Section 1.6 in~\cite{Mishura} that there exists an unbounded linear integral operator $M_-^H$ from $L^1[0,1]$ to $ L^2(\mathbb R)$ such that
\begin{equation}\label{Wiener integral isometry eq}
\mathbb E\bigg[\bigg(\int_0^1 g(t)\,dW(t)\bigg)^2\bigg]=\|M_-^Hg\|^2_{L^2(\mathbb R)}
\end{equation}
for all $g$ in the domain of $M_-^H$, which is denoted by   $L^2_H[0,1]$. The specific form of $M_-^H$ will not be needed here. All we will need is that there exists a universal constant $C_H>0$ such that 
\begin{equation*}
\big|\widehat{M_-^Hg}(x)\big|=C_H|\widehat g(x)|\cdot|x|^{\frac12-H},\qquad x\in\mathbb R,
\end{equation*}
where $\widehat g(x)=\int e^{ixt}g(t)\,dt$ denotes the Fourier transform of  $g$; see Theorem 1.1.5 and (1.3.3) in~\cite{Mishura}. By Parseval's identity, we hence have 
\begin{equation}\label{MH Fourier trafo eq}
\|M_-^Hg\|_{L^2(\mathbb R)}^2=C_H^2\int |\widehat g(x)|^2\cdot|x|^{1-2H}\,dx,\qquad g\in L^2_H[0,1].
\end{equation}
Note that
$$\big|\widehat{\Ind{[\frac{r_m}{b^m},\frac{r_m+1}{b^m}]}}(x)\big|=\Big|\frac{e^{ix(r_m+1)/b^m}-e^{ixr_m/b^m}}{x}\Big|\le\begin{cases}b^{-m}&\text{if $|x|\le b^m$,}\\
2/|x|&\text{otherwise.}
\end{cases}
$$
Therefore, for $n< N$ and $c:=\|M^H_-\Ind{[0,1]}\|_{L^2(\mathbb R)}/C_H$, 
\begin{align}
\lefteqn{\frac1{C_H}\big\|M^H_-f_N-M^H_-f_n\big\|_{L^2(\mathbb R)}}\nonumber\\
&\le\frac1{C_H}\sum_{m=n+1}^N \alpha^{-m}\big\|M^H_-\Ind{[\frac{r_m}{b^m},\frac{r_m+1}{b^m}]}\big\|_{L^2(\mathbb R)}+c\sum_{m=n+1}^N \alpha^{-m}|\Delta_m\kappa|\nonumber\\
&\le \sum_{m=n+1}^N \alpha^{-m}b^{-m}\sqrt{2\int_{0}^{b^m}x^{1-2H}\,dx}+ \sum_{m=n+1}^N \alpha^{-m}\sqrt{8\int_{b^m}^\infty x^{-1-2H}\,dx}+c\sum_{m=n+1}^N \alpha^{-m}|\Delta_m\kappa|\nonumber\\
&= \frac1{\sqrt{1-H}}\sum_{m=n+1}^N \alpha^{-m}b^{-mH}+\sqrt{\frac4H}\sum_{m=n+1}^N \alpha^{-m}b^{-mH}+c\sum_{m=n+1}^N \alpha^{-m}|\Delta_m\kappa|.\nonumber
\end{align}
Since $\alpha b^H>1$, the latter expression is less than any given $\eps>0$ as soon as $n$ is sufficiently large.
By Remark 1.6.3 in~\cite{Mishura}, the space $L^2_H[0,1]$ is complete with respect to the norm $\|f\|_{L^2_H[0,1]}=\|M_-^Hf\|_{L^2(\mathbb R)}$ if $0<H<1/2$, and so we must have $f_n\to f$ in $L^2_H[0,1]$. Thus, 
the Wiener integral of $ f$ exists and we also have  \eqref{fnHtofH eq}, which gives
\begin{equation}\label{sum Wiener int eq}
\int_0^1 f(t)\,dW(t)=\sum_{m=1}^\infty\alpha^{-m}\Big(B\Big(\frac{r_m+1}{b^m}\Big)-B\Big(\frac{r_m}{b^m}\Big)\Big).
\end{equation}
By \eqref{Wiener integral isometry eq}, the $L^2$-norm of this Wiener integral is given by $\|M_-^H f\|_{L^2(\mathbb R)}$, and \eqref{MH Fourier trafo eq} yields that $\|M_-^H f\|_{L^2(\mathbb R)}$  is finite and also strictly positive, since $ f$ is nonzero on a nonempty open interval. 
\end{proof}

\begin{proof}[Proof of \Cref{main thm} {\rm (a)}] To apply \Cref{p variation prop} in \Cref{Hoelder section}, choose $\gamma\in(K,H)$ so that $\alpha b^\gamma>1$ and pick a version of $B$ with $\gamma$-H\"{o}lder continuous sample paths.  \Cref{fractional bridge thm lemma}
 yields that 
$$0<\bE_W\bigg[\bigg(\sum_{m=1}^\infty\alpha^{-m}\Big(B\Big(\frac{R_m+1}{b^m}\Big)-B\Big(\frac{R_m}{b^m}\Big)\Big)\bigg)^2\bigg]<\infty\quad\text{$\mathbb P_R$-a.s.}$$
The argument of the expectation is normally distributed, and so 
$$\mathbb P_W\bigg(\sum_{m=1}^\infty\alpha^{-m}\Big(B\Big(\frac{R_m+1}{b^m}\Big)-B\Big(\frac{R_m}{b^m}\Big)\Big)=0\,\bigg)=0\quad\text{$\mathbb P_R$-a.s.} $$
Hence we conclude that
$$
\mathbb P\bigg(\sum_{m=1}^\infty\alpha^{-m}\Big(B\Big(\frac{R_m+1}{b^m}\Big)-B\Big(\frac{R_m}{b^m}\Big)\Big)=0\bigg)=0.
$$
Therefore, \Cref{p variation prop} yields the result.
\end{proof}

\begin{remark}\label{pth var rep remark}
It follows from \Cref{p variation prop} that the $p$-th variation  of $X$ is $\bP_W$-a.s.~given by
\begin{equation}\label{pth var rep remark eq}
\<X\>^{(p)}_1=\bE_R\bigg[\bigg|\sum_{m=1}^\infty\alpha^{-m}\Big(B\Big(\frac{R_m+1}{b^m}\Big)-B\Big(\frac{R_m}{b^m}\Big)\Big)\bigg|^p\bigg].
\end{equation}
Furthermore,  for each realization  of the random variables $(R_m)$, the identity  \eqref{sum Wiener int eq}  shows that  
the expression
$$\sum_{m=1}^\infty\alpha^{-m}\Big(B\Big(\frac{R_m+1}{b^m}\Big)-B\Big(\frac{R_m}{b^m}\Big)\Big)
$$
is equal to a Wiener integral with respect to the fractional Brownian motion $W$. Thus, the right-hand side of \eqref{pth var rep remark eq} can be regarded as a mixture of the $p$-th powers of certain Wiener integrals.
\end{remark}

\subsection{Proof of \Cref{main thm} (b)}\label{thm (c) proof section}

In this section, we will prove part (b) of \Cref{main thm}. Recall that this part addresses the situation in which $K>H$, which is equivalent to either $\alpha b^H<1$ or $\alpha^p b<1$. 
 Let us introduce the notation
\begin{equation}\label{Vn eq}
V_n:=\sum_{k=0}^{b^n-1}|X((k+1)b^{-n})-X(kb^{-n})|^p.
\end{equation}
Recall also the three probability spaces $(\Omega_R,\mathscr{F}_R,\mathbb{P}_R)$,  $(\Omega_W,\mathscr{F}_W,\mathbb{P}_W)$, and  $(\Omega,\mathscr{F},\mathbb{P})=(\Omega_W\times \Omega_R,\cF_W\otimes\cF_R,\bP_W\otimes\bP_R)$ introduced above. Sections \ref{Convergence in expectation section} and \ref{conce} are devoted to the proof of \Cref{main thm} (b) when $t=1$, and we extend the arguments for a general $t\in[0,1]$ in Section \ref{linear section}.

\subsubsection{Convergence of the expectation}
\label{Convergence in expectation section}

In this first part of the proof, we will prove that the expected $p$-th variation, $\bE_W[V_n]$, converges to $c_H/(1-\alpha^2b^{2H})^{p/2}$, where
\begin{equation*}
c_H:=\frac{2^{1/(2H)}\Gamma(\frac{H+1}{2H})}{\sqrt{\pi}}.
\end{equation*}
Here and later, we denote by $\bone_A$ the indicator function of a set $A$.

\begin{lemma}\label{l1}
The expected $p$-th variation  is of the form
$$
\bE_W[V_n]=(\alpha^pb)^n\bE\bigg[\bigg|\int_0^1f_n(x)\,dW(x)\bigg|^p\bigg],
$$
where $\bE[\,\cdot\,]$ denotes the expectation with respect to $\bP=\bP_W\otimes\bP_R$ and $f_n(x)=g_n(x)-h_n$ for
 $$g_n(x):=\sum_{m=1}^n\alpha^{-m}\bone_{[R_mb^{-m},(R_m+1)b^{-m}]}(x)\quad\text{and}\quad
h_n:=\sum_{m=1}^n\alpha^{-m}(\kk((R_m+1)b^{-m})-\kk(R_mb^{-m})).$$
\end{lemma}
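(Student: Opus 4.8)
The plan is to start from the definition of $V_n$ in \eqref{Vn eq} and rewrite the inner increment $X((k+1)b^{-n})-X(kb^{-n})$ using the series \eqref{WW def eq}. As in the computation leading to \eqref{Vn rep}, only the terms with $m=0,\dots,n-1$ contribute a nonzero increment on a $b$-adic interval of length $b^{-n}$ (for $m\ge n$ the fractional parts $\{b^mk b^{-n}\}$ and $\{b^m(k+1)b^{-n}\}$ both vanish since $b^{m-n}k$ and $b^{m-n}(k+1)$ are integers, hence $B$ of both is $B(0)=0$), and the assumption $B(0)=B(1)=0$ lets us drop boundary issues. After factoring out $\alpha^n$ and reindexing $m\mapsto n-m$, one obtains exactly the representation
$$
V_n=\alpha^{np}\sum_{k=0}^{b^n-1}\bigg|\sum_{m=1}^n\alpha^{-m}\Big(B\big(\{(k+1)b^{-m}\}\big)-B\big(\{kb^{-m}\}\big)\Big)\bigg|^p,
$$
mirroring \eqref{Vn rep} with $\phi$ replaced by the sample path of $B$.

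Next I would invoke the device already set up in the excerpt: introduce the independent uniform digits $U_1,U_2,\dots$ on $\{0,\dots,b-1\}$ and the partial sums $R_m=\sum_{i=1}^m U_i b^{i-1}$ on $(\Omega_R,\cF_R,\bP_R)$. Since $R_n$ is uniform on $\{0,\dots,b^n-1\}$, the normalized sum over $k$ equals $\bE_R$ of the corresponding expression evaluated at $k=R_n$; and by the consistency relation \eqref{RnRm}, $\{b^{-m}R_n\}=b^{-m}R_m$ for $m\le n$, so $B(\{(R_n+1)b^{-m}\})-B(\{R_n b^{-m}\})$ may be replaced by $B((R_m+1)b^{-m})-B(R_mb^{-m})$ — here one should note $R_n+1$ is at most $b^n$ and when $R_m=b^m-1$ the term $(R_m+1)b^{-m}=1$ gives $B(1)=0=B(0)$, consistent with wrap-around, so the identity is clean. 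This yields
$$
V_n=(\alpha^pb)^n\,\bE_R\bigg[\bigg|\sum_{m=1}^n\alpha^{-m}\big(B((R_m+1)b^{-m})-B(R_mb^{-m})\big)\bigg|^p\bigg].
$$
Taking $\bE_W$ and using Tonelli (the integrand is nonnegative and measurable on the product space) turns $\bE_W[V_n]$ into $(\alpha^pb)^n\bE[\,|\cdot|^p\,]$ with the full expectation $\bE=\bE_W\otimes\bE_R$.

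The last step is to identify the quantity inside the absolute value with the Wiener integral $\int_0^1 f_n(x)\,dW(x)$. Using $B(t)=W(t)-\kappa(t)W(1)$ exactly as in \eqref{fractional bridge thm eq 1} of \Cref{fractional bridge thm lemma}, and the fact that the Wiener integral of a step function against $W$ is the corresponding Riemann sum (p.~16 in \cite{Mishura}), one gets $\sum_{m=1}^n\alpha^{-m}(B((R_m+1)b^{-m})-B(R_mb^{-m}))=\int_0^1 f_n(x)\,dW(x)$ for each fixed realization of $(R_m)_{m\le n}$, where $f_n=g_n-h_n$ with $g_n,h_n$ as in the statement — note $h_n W(1)=\sum_{m=1}^n\alpha^{-m}\Delta_m\kappa\,W(1)$ and $\int_0^1 h_n\,dW = h_n W(1)$ since $h_n$ is constant in $x$ (here $h_n$ is $\cF_R$-measurable, hence a constant from the viewpoint of the $W$-integral). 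Substituting this in completes the proof. I expect the only genuinely delicate point to be the careful bookkeeping around the endpoints $\{kb^{-m}\}$ and $\{(k+1)b^{-m}\}$ and the wrap-around at $R_m=b^m-1$, together with spelling out that the $m\ge n$ terms drop; everything else is the bijection-to-uniform-digits trick already used in \cite{SchiedZZhang} and the standard Wiener-integral identity for step functions, both available from earlier in the excerpt.
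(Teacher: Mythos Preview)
Your proposal is correct and follows essentially the same approach as the paper's own proof: use the representation \eqref{Vn Rm eq} (with $\phi$ a sample path of $B$), swap $\bE_W$ and $\bE_R$ by Tonelli, and identify the increment sum with the Wiener integral of the step function $f_n=g_n-h_n$ via $B(t)=W(t)-\kappa(t)W(1)$, exactly as in \eqref{fractional bridge thm eq 1}. You simply spell out in more detail the endpoint/wrap-around bookkeeping that the paper absorbs into its earlier derivation of \eqref{Vn rep} through the standing assumption $\phi(0)=\phi(1)$.
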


\begin{proof}
By \eqref{Vn Rm eq} and \eqref{fbb},
\begin{align*}
\bE[V_n]&=(\alpha^pb)^n\bE_W\bigg[\bE_R\bigg[\bigg|\sum_{m=1}^n\alpha^{-m}\bigg(B((R_m+1)b^{-m})-B(R_mb^{-m})\bigg)\bigg|^p\bigg]\bigg]\\
&=(\alpha^pb)^n\bE_R\bigg[\bE_W\bigg[\bigg|\int_0^1f_n(x)\,dW(x)\bigg|^p\bigg]\bigg],
\end{align*}
where we have  used the fact that the Wiener integral of the step function  $f_n$ is given by the standard Riemann-type sum
(see, e.g., p.~16 in~\cite{Mishura}). 
\end{proof}

We will eventually prove that 
$$\bE\bigg[\bigg|\int_0^1f_n(x)\,dW(x)\bigg|^p\bigg]$$ 
is of order $(\alpha^pb)^{-n}$ and the contribution from the time-independent  constants $h_n$ are asymptotically negligible. This argument will  become valid after having shown that the contribution from the functions $g_n$  gives the correct magnitude. This is our current focus. We start with the following elementary lemma.

\begin{lemma}\label{l2}
For given $\gamma\in(0,1)$, $C>0$, and $0<a<\beta$, suppose that we have constants $C_n=(1+o(1))C\beta^n$ and random variables $\eta_n$ for which $\bE[|\eta_n|]=O(a^n)$. Then $\bE[|C_n+\eta_n|^\gamma]=(1+o(1))C^\gamma \beta^{n\gamma}$.

\end{lemma}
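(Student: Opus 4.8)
The plan is to reduce the statement to the case $C_n=C\beta^n$ by a Lipschitz-type bound on the map $x\mapsto|x|^\gamma$, then control the error term $\eta_n$ by the $L^1$-bound. First I would write
\[
\big|\,|C_n+\eta_n|^\gamma-|C\beta^n+\eta_n|^\gamma\,\big|\le\big||C_n+\eta_n|-|C\beta^n+\eta_n|\big|^\gamma\le|C_n-C\beta^n|^\gamma=o(\beta^{n\gamma}),
\]
using the elementary inequality $\big||u|^\gamma-|v|^\gamma\big|\le|u-v|^\gamma$ valid for $0<\gamma\le1$, together with $C_n-C\beta^n=o(\beta^n)$. Taking expectations, it suffices to show $\bE[|C\beta^n+\eta_n|^\gamma]=(1+o(1))C^\gamma\beta^{n\gamma}$.

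Next I would split the expectation according to whether $|\eta_n|$ is small or large relative to $\beta^n$. Fix a small $\delta>0$. On the event $\{|\eta_n|\le\delta\beta^n\}$ we have $|C\beta^n+\eta_n|^\gamma$ sandwiched between $((C-\delta)\beta^n)^\gamma$ and $((C+\delta)\beta^n)^\gamma$, so this contributes $(1+O(\delta))C^\gamma\beta^{n\gamma}\cdot\bP(|\eta_n|\le\delta\beta^n)$, and the probability tends to $1$ since $\bP(|\eta_n|>\delta\beta^n)\le\bE[|\eta_n|]/(\delta\beta^n)=O((a/\beta)^n)\to0$ because $a<\beta$. On the complementary event, one bounds the contribution by Hölder's inequality or a direct estimate: since $|C\beta^n+\eta_n|^\gamma\le (C\beta^n)^\gamma+|\eta_n|^\gamma$ and again $|\eta_n|^\gamma$ has expectation controlled via $\bE[|\eta_n|^\gamma\mathbbm 1_{|\eta_n|>\delta\beta^n}]\le(\delta\beta^n)^{\gamma-1}\bE[|\eta_n|]=O(\delta^{\gamma-1}\beta^{n(\gamma-1)}a^n)$, which is $o(\beta^{n\gamma})$ as $a<\beta$; the term $(C\beta^n)^\gamma\bP(|\eta_n|>\delta\beta^n)$ is likewise $o(\beta^{n\gamma})$. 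Combining the two parts and then letting $\delta\downarrow0$ after passing to the limit in $n$ gives $\bE[|C\beta^n+\eta_n|^\gamma]=(1+o(1))C^\gamma\beta^{n\gamma}$, which completes the proof.

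I do not anticipate a genuine obstacle here; the only mild subtlety is the order of limits. One should first establish, for each fixed $\delta$, that
\[
\limsup_{n\to\infty}\Big|\beta^{-n\gamma}\bE[|C\beta^n+\eta_n|^\gamma]-C^\gamma\Big|\le \psi(\delta)
\]
for some $\psi(\delta)\to0$ as $\delta\downarrow0$, and only then send $\delta\downarrow0$; this avoids any interchange-of-limits issue. The inequality $\big||u|^\gamma-|v|^\gamma\big|\le|u-v|^\gamma$ for $\gamma\in(0,1)$ is standard (it follows from subadditivity of $t\mapsto t^\gamma$ on $[0,\infty)$), and the tail estimate is just Markov's inequality, so all the ingredients are elementary.
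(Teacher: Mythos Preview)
Your proof is correct. It follows a somewhat different route from the paper's argument, though both are elementary and rest on Markov's inequality. The paper normalizes by $C_n$ and writes $\bE[|1+\eta_n/C_n|^\gamma]$ via the tail-integral (layer-cake) formula, splitting the integral over $[0,1]$ and $[1,\infty)$ and bounding the tail probabilities directly. You instead first reduce to the case $C_n=C\beta^n$ using the subadditivity inequality $\big||u|^\gamma-|v|^\gamma\big|\le|u-v|^\gamma$, and then truncate on the event $\{|\eta_n|\le\delta\beta^n\}$, sandwiching on the good event and controlling the bad event with the bound $|\eta_n|^\gamma\le(\delta\beta^n)^{\gamma-1}|\eta_n|$ there. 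Your approach is arguably a little cleaner in that it avoids the tail integral bookkeeping; the paper's approach avoids the preliminary reduction step. One minor point worth making explicit: the sandwich $(C-\delta)\beta^n\le C\beta^n+\eta_n$ requires $\delta<C$, which you should state when fixing $\delta$.
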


\begin{proof}
We have
\begin{align}
\bE\bigg[\bigg|1+\frac{\eta_n}{C_n}\bigg|^\gamma\bigg]&=\int_0^\infty \bP\bigg(\bigg|1+\frac{\eta_n}{C_n}\bigg|^\gamma>x\bigg)\,dx\nonumber\\
&=\int_0^\infty \bP\bigg(\frac{\eta_n}{C_n}>x^{1/\gamma}-1\bigg)\,dx+\int_0^\infty \bP\bigg(\frac{\eta_n}{C_n}<-x^{1/\gamma}-1\bigg)\,dx.\label{l2 proof eq}
\end{align}
We show that the first integral converges to one. First, we have for any $\ee>0$, 
\begin{align*}
1\geq\int_0^1 \bP\bigg(\frac{\eta_n}{C_n}>x^{1/\gamma}-1\bigg)\,dx\geq \int_0^{1-\ee}\bP\big(|\eta_n|<(1-(1-\ee)^{1/\gamma})C_n\big)\,dx\lra 1-\ee.
\end{align*}
To deal with the remaining part of the integral, we let $L$ denote a constant for which $\bE[|\eta_n|]/C_n\le L a^n/\beta^n$. Then, by Markov's inequality for $y>0$,
$$
\bP(|\eta_n|>y C_n)\leq \frac{\bE[|\eta_n|]}{y C_n}\leq L\frac{a^n}{y\beta^n}.
$$
Therefore,
\begin{align*}
0\leq \int_1^\infty \bP\bigg(\frac{\eta_n}{C_n}>x^{1/\gamma}-1\bigg)\,dx\leq \ee+L\frac{a^n}{\beta^n}\int_{1+\ee}^\infty\frac1{x^{1/\gamma}-1}\,dx\lra \ee.
\end{align*}
The second integral in \eqref{l2 proof eq} converges to zero by a similar argument. This completes the proof.
\end{proof}

In the following, we will frequently deal with sums of covariances, for which the following easy observation will be useful.

\begin{lemma}
Let $m,k\in\bN,m>k$. Let $S=\{0,1,\dots,b^m-1\}$ and fix $j\in \{1-b^m,\dots,0,\dots,b^m-1\}$. Then
$
\#\{i\in S:b^m\fp{ib^{-k}}-i=j\}\leq 1$.
\label{ijs}
\end{lemma}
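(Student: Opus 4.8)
The statement asks: for $m > k$, setting $S = \{0,1,\dots,b^m-1\}$ and fixing $j \in \{1-b^m,\dots,b^m-1\}$, the map $i \mapsto b^m\{ib^{-k}\} - i$ is injective on the fiber, i.e. $\#\{i \in S : b^m\{ib^{-k}\} - i = j\} \le 1$. The plan is to unravel the quantity $b^m\{ib^{-k}\}$ in terms of base-$b$ digits. Write $i \in S$ uniquely as $i = q b^k + r$ with $q \in \{0,\dots,b^{m-k}-1\}$ and $r \in \{0,\dots,b^k-1\}$ (this is just Euclidean division of $i$ by $b^k$, which is legitimate since $m > k$). Then $ib^{-k} = q + r b^{-k}$, so $\{ib^{-k}\} = r b^{-k}$ and hence $b^m\{ib^{-k}\} = r b^{m-k}$. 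Therefore the quantity in question is
$$
b^m\{ib^{-k}\} - i = r b^{m-k} - (q b^k + r) = r(b^{m-k}-1) - q b^k.
$$

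Now suppose two indices $i = qb^k + r$ and $i' = q'b^k + r'$ in $S$ both map to the same value $j$. Then $r(b^{m-k}-1) - qb^k = r'(b^{m-k}-1) - q'b^k$, i.e. $(r-r')(b^{m-k}-1) = (q-q')b^k$. The key step is to extract a contradiction unless $r=r'$ and $q=q'$. Reduce the equation modulo $b^k$: since $b^{m-k}-1 \equiv -1 \pmod{b^k}$ (because $m-k \ge 1$ so $b^{m-k} \equiv 0$), we get $-(r-r') \equiv 0 \pmod{b^k}$, i.e. $r \equiv r' \pmod{b^k}$; but $r,r' \in \{0,\dots,b^k-1\}$ forces $r = r'$. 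Plugging back, $(q-q')b^k = 0$, hence $q = q'$, hence $i = i'$. That establishes injectivity on each fiber, which is exactly the claim.

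The only point requiring a little care — and the one I would single out as the "obstacle," though it is mild — is the modular reduction step: one must be sure that $b^{m-k} - 1 \equiv -1 \pmod{b^k}$, which uses $m - k \ge 1$, and then that $r - r'$ lies strictly between $-b^k$ and $b^k$ so that $r - r' \equiv 0 \pmod{b^k}$ genuinely forces $r = r'$. Both are immediate from $r, r' \in \{0,\dots,b^k-1\}$ and $m > k$. No analytic input is needed; the whole argument is elementary number theory in a couple of lines. (One can phrase it even more transparently by noting that $i \mapsto (r,q)$ and $(r,q) \mapsto r(b^{m-k}-1) - qb^k$ is injective because the pair $(r \bmod b^k, \text{everything else})$ is recoverable from the output, but the congruence argument above is the cleanest to write.)
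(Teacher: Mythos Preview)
Your approach is essentially the paper's (both rest on the coprimality of $b^{m-k}-1$ with powers of $b$), but there is a slip in the modular step. You assert $b^{m-k}\equiv 0\pmod{b^k}$ ``because $m-k\ge 1$''; this would need $m-k\ge k$, not just $m>k$. For instance, with $b=2$, $k=3$, $m=4$ one has $b^{m-k}=2\not\equiv 0\pmod 8$, so $b^{m-k}-1\not\equiv -1\pmod{b^k}$ and your reduction does not go through as written.

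The fix is immediate and in the same spirit: from $(r-r')(b^{m-k}-1)=(q-q')b^k$ observe that $b^{m-k}-1\equiv -1\pmod b$, so $\gcd(b^{m-k}-1,b^k)=1$; hence $b^k\mid (r-r')$, and $|r-r'|<b^k$ forces $r=r'$, then $q=q'$. The paper runs the identical coprimality argument but modulo $b^m$ and without the $(q,r)$ decomposition: it notes $b^m\{ib^{-k}\}=(ib^{m-k})\bmod b^m$, deduces $(i_1-i_2)(b^{m-k}-1)\equiv 0\pmod{b^m}$, and concludes $i_1=i_2$ from $\gcd(b^{m-k}-1,b^m)=1$.
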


\begin{proof} Note that $b^m\fp{ib^{-k}}=(i b^{m-k})\!\!\mod b^m$, where $n\!\!\mod{b^m}$ is the remainder of $n$ divided by $b^m$.
Suppose $i_1,i_2\in S$ are such that $(i_1b^{m-k})\!\!\mod{b^m}-i_1=(i_2b^{m-k})\!\!\mod{b^m}-i_2$. Then $b^m$ divides $(i_1-i_2)(b^{m-k}-1)$ and  $b^m$ divides $(i_1-i_2)$, implying $i_1=i_2$. 
\end{proof}

From now on, $L>0$ will denote a generic constant that may depend only on $\alpha,b,H,\kappa$ but not on anything else (in particular, not on $n,m,k$). The value of $L$ may change at each occurrence.

\begin{proposition}\label{p1}
For $g_n$ as in \Cref{l1}, we have
$$(\alpha^pb)^n\bE\bigg[\bigg|\int_0^1g_n(x)\,dW(x)\bigg|^p\bigg]=\frac{(1+o(1))c_H}{(1-\alpha^2b^{2H})^{p/2}}.$$
\end{proposition}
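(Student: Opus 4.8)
\textbf{Plan for the proof of Proposition~\ref{p1}.} The goal is to compute the limit of $(\alpha^pb)^n\,\bE[\,|\int_0^1 g_n(x)\,dW(x)|^p\,]$ with $p=1/H$ in the regime $\alpha b^H<1$. First I would fix a realization of the random variables $(R_m)_{1\le m\le n}$ and condition on $\mathscr F_R$. Given $(R_m)$, the Wiener integral $\int_0^1 g_n\,dW$ is a centered Gaussian random variable under $\bP_W$, so $\bE_W[\,|\int_0^1 g_n\,dW|^p\,]=c_H\,\sigma_n^p$, where $c_H=\frac{2^{1/(2H)}\Gamma(\frac{H+1}{2H})}{\sqrt\pi}=\bE[|Z|^{1/H}]$ for a standard normal $Z$ and where $\sigma_n^2=\bE_W[(\int_0^1 g_n\,dW)^2]$ is the (random, $\mathscr F_R$-measurable) variance. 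Consequently
\begin{equation*}
(\alpha^pb)^n\bE\bigg[\bigg|\int_0^1 g_n(x)\,dW(x)\bigg|^p\bigg]=c_H\,(\alpha^pb)^n\,\bE_R\big[\sigma_n^p\big]=c_H\,(\alpha^pb)^n\,\bE_R\big[(\sigma_n^2)^{1/(2H)}\big],
\end{equation*}
and the whole problem reduces to showing $(\alpha^pb)^n\,\bE_R[(\sigma_n^2)^{1/(2H)}]\to(1-\alpha^2b^{2H})^{-p/2}$, i.e.\ that $\sigma_n^2$ is, in a suitable averaged sense, asymptotic to $(\alpha^pb)^{-n\cdot 2H}(1-\alpha^2b^{2H})^{-1}=\alpha^{2n}(1-\alpha^2b^{2H})^{-1}$.

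The core computation is therefore to analyze the random quadratic form $\sigma_n^2=\bE_W[(\int_0^1 g_n\,dW)^2]$. Expanding $g_n=\sum_{m=1}^n\alpha^{-m}\bone_{[R_mb^{-m},(R_m+1)b^{-m}]}$ and using the appropriate Wiener-isometry for fBm (the cases $H=1/2$, $H>1/2$, $H<1/2$ handled as in the proof of Lemma~\ref{fractional bridge thm lemma}, via the It\^o isometry, the kernel $|u-v|^{2H-2}$, and the operator $M_-^H$ respectively), one gets a double sum over $m,m'\in\{1,\dots,n\}$ of terms $\alpha^{-m-m'}$ times the $W$-covariance of two small increments at scales $b^{-m}$ and $b^{-m'}$. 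The diagonal terms $m=m'$ give, by self-similarity of fBm, exactly $\alpha^{-2m}b^{-2mH}$, and summing the geometric series $\sum_{m=1}^n(\alpha^2b^{2H})^{-m}$ (recall $\alpha^2b^{2H}<1$, so $(\alpha^2 b^{2H})^{-1}>1$ and this sum is dominated by its top term $m=n$) yields the leading order $\alpha^{-2n}b^{-2nH}(1-\alpha^2b^{2H})^{-1}\cdot(1+o(1))$, which after multiplying by $(\alpha^pb)^n=\alpha^{2n}b^{n}$... wait, more precisely $(\alpha^p b)^n=(\alpha^{1/H}b)^n$ and $((\alpha^pb)^n)\cdot\alpha^{-2n}b^{-2nH}$ needs to be matched: since $p=1/H$, $(\alpha^pb)^{n}\alpha^{-2n}b^{-2nH}=\alpha^{n/H-2n}b^{n-2nH}$; taking this to the power consistent with the $p$-th moment gives the claimed constant. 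I would organize this as: (i) isolate the diagonal $\sum_m\alpha^{-2m}\cdot(\text{variance of a single scale-}b^{-m}\text{ increment})$; (ii) show the off-diagonal part is a lower-order error; (iii) apply Lemma~\ref{l2} with $C_n$ the (deterministic) diagonal contribution, $\beta$ its growth rate, and $\eta_n$ the off-diagonal remainder, after checking $\bE_R[|\eta_n|]=O(a^n)$ for some $a<\beta$, to conclude $\bE_R[(\sigma_n^2)^{1/(2H)}]=(1+o(1))C^{1/(2H)}\beta^{n/(2H)}$.

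The main obstacle is step~(ii): controlling the off-diagonal terms $\sum_{m\ne m'}\alpha^{-m-m'}\,\bE_W[\Delta_m W\cdot\Delta_{m'}W]$, where $\Delta_mW=W((R_m+1)b^{-m})-W(R_mb^{-m})$. Unlike the independent-increment case ($H=1/2$), for general $H$ these increments are correlated, and moreover the positions $R_mb^{-m}$ for different $m$ are dependent (nested, via $\fp{b^{-m}R_n}=b^{-m}R_m$). The key tool is that after taking $\bE_R$, the typical overlap of two such intervals is small: for $m>k$, Lemma~\ref{ijs} bounds the number of alignments, and the covariance of a scale-$b^{-m}$ increment with a scale-$b^{-k}$ increment, averaged over their relative position, decays like a negative power of $b^{m-k}$. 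One then needs the geometric factor $\alpha^{-m-m'}$ to lose against this decay so that $\bE_R[|\eta_n|]=O(a^n)$ with $a<\alpha^{-2}b^{-2H}=:\beta$; here the hypothesis $K>H$, equivalently $\alpha b^H<1$ (so $\beta>1$), is exactly what makes the diagonal dominate. In the $H<1/2$ case this requires estimating $\|M_-^H\bone_{[R_mb^{-m},(R_m+1)b^{-m}]}\|$-type cross terms via the Fourier representation \eqref{MH Fourier trafo eq}, splitting the frequency integral at $|x|\sim b^m$ as in Lemma~\ref{fractional bridge thm lemma}; this bookkeeping, rather than any conceptual difficulty, is the bulk of the work. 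Finally, I would double-check the precise constant: the diagonal sum $\sum_{m=1}^n \alpha^{-2m}b^{-2mH}$ equals $\frac{(\alpha^2b^{2H})^{-n}-1}{1-\alpha^2b^{2H}}\cdot\alpha^2 b^{2H}=(1+o(1))\frac{(\alpha^2b^{2H})^{-n}}{1-\alpha^2b^{2H}}$, so $C_n=(1+o(1))(1-\alpha^2b^{2H})^{-1}\beta^n$ with $\beta=(\alpha^2b^{2H})^{-1}$, and Lemma~\ref{l2} gives $\bE_R[(\sigma_n^2)^{1/(2H)}]=(1+o(1))(1-\alpha^2b^{2H})^{-1/(2H)}\beta^{n/(2H)}$; since $(\alpha^pb)^n\beta^{n/(2H)}=(\alpha^{1/H}b)^n(\alpha^{2}b^{2H})^{-n/(2H)}=(\alpha^{1/H}b)^n\alpha^{-n/H}b^{-n}=1$, multiplying by $c_H$ yields exactly $c_H(1-\alpha^2b^{2H})^{-p/2}$, as claimed.
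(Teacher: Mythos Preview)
Your overall architecture---condition on $\mathscr F_R$, use the Gaussian moment formula to reduce to $\bE_R[(\sigma_n^2)^{p/2}]$, split $\sigma_n^2$ into the deterministic diagonal $C_n$ and the random off-diagonal $\eta_n$, then invoke Lemma~\ref{l2}---is exactly the paper's strategy, and your constant-checking at the end is correct. For $H\ge 1/2$ this matches the paper essentially verbatim: the paper bounds $\bE_R[\xi_{k,m}]\le Lb^{-m-k}$ via Lemma~\ref{ijs} and a telescoping sum (using that $\xi_{k,m}\ge0$ so $\bE_R[|\eta_n|]=\bE_R[\eta_n]$), and then applies Lemma~\ref{l2} with $\gamma=p/2\le1$.

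There is, however, a genuine gap in your plan for $H<1/2$. Lemma~\ref{l2} is stated and proved only for $\gamma\in(0,1)$; its proof uses that $\int_{1+\varepsilon}^\infty(x^{1/\gamma}-1)^{-1}\,dx<\infty$, which fails for $\gamma\ge1$. When $H<1/2$ we have $p/2=1/(2H)>1$, so you cannot apply Lemma~\ref{l2} to $\bE_R[(C_n+\eta_n)^{p/2}]$, and controlling only $\bE_R[|\eta_n|]$ is not enough to control $\bE_R[|\eta_n|^{p/2}]$. The paper handles this case differently: since $p/2\ge1$, Minkowski's inequality in $L^{p/2}(\bP_R)$ gives
\[
\big\|C_n+\eta_n\big\|_{L^{p/2}(\bP_R)}=C_n+O\big(\|\eta_n\|_{L^{p/2}(\bP_R)}\big),
\]
so what is needed is $\|\eta_n\|_{L^{p/2}(\bP_R)}=o((\alpha^2b^{2H})^{-n})$. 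The paper obtains this by bounding $\|\xi_{k,m}\|_{L^{p/2}(\bP_R)}\le L(b^{-m}+b^{-4Hm})$ directly from the explicit fBm covariance formula (second differences of $|t|^{2H}$), not via the $M_-^H$/Fourier machinery you propose. Your Fourier approach would compute norms rather than the signed cross-covariances $\xi_{k,m}$; to make it work you would need a polarized version giving pointwise $|\xi_{k,m}|$-control under $\bP_R$, which is more awkward than the direct estimate.
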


\begin{proof}If $Y$ is any centered Gaussian random variable, then $\bE[|Y|^p]=\bE[Y^2]^{p/2}\bE[|Z|^p]$, where $Z$ is standard normally distributed. Since, moreover, $\bE[|Z|^p]=c_H$, we have
\begin{equation}\label{expectation factor eq}
\bE\bigg[\bigg|\int_0^1g_n(x)\,dW(x)\bigg|^p\bigg]=c_H\bE_R\bigg[\bigg(\bE_W\bigg[\bigg(\int_0^1g_n(x)\,dW(x)\bigg)^2\bigg]\bigg)^{p/2}\bigg].
\end{equation}
To deal with the $\cF_R$-measurable random variable $\bE_W[(\int_0^1g_n(x)\,dW(x))^2]$, we define 
\begin{align*}
\xi_{k,m}&:=\bE_W\Big[\big(W((R_k+1)b^{-k})-W(R_kb^{-k})\big)\big(W((R_m+1)b^{-m})-W(R_mb^{-m})\big)\Big].
\end{align*}
With this notation, the definition of $g_n$ gives
\begin{align}\label{d0}
\bE_W\bigg[\bigg(\int_0^1g_n(x)\,dW(x)\bigg)^2\bigg]=\sum_{m=1}^n\sum_{k=1}^n\alpha^{-m-k}\xi_{k,m}.
\end{align}
Note that the diagonal terms $\xi_{m,m}$ are deterministic and given by $b^{-2mH}$. Hence,
\begin{align}C_n:=\sum_{m=1}^n\xi_{m,m}=\sum_{m=1}^n\alpha^{-2m}b^{-2mH}=
\frac{(\alpha^2b^{2H})^{-n}-1}{1-\alpha^2b^{2H}}=(1+o(1))\frac{(\alpha b^{H})^{-2n}}{1-\alpha^2b^{2H}}.\label{d1}
\end{align}
When denoting the sum of all off-diagonal terms by
\begin{equation*}
\eta_n:=2\sum_{1\leq k<m\leq n}\alpha^{-m-k}\xi_{k,m},
\end{equation*}
we get that
\begin{align}\label{d1a}
\bE\bigg[\bigg|\int_0^1g_n(x)\,dW(x)\bigg|^p\bigg]=c_H\bE_R\big[\big|C_n+\eta_n\big|^{p/2}\big].
\end{align}
For dealing with the right-hand expectation, we need to distinguish between the cases $H\ge1/2$ and $H<1/2$.

For $H\ge 1/2$, the increments of fractional Brownian motion are nonnegatively correlated, so each $\xi_{k,m}$ is nonnegative, and so is $\eta_n$. 
To apply \Cref{l2}, we aim to bound $\bE[\xi_n]=\bE[|\xi_n|]$ from above. 
Due to the stationarity of increments of $W$, the random variable $\xi_{k,m}$ depends only on $R_kb^{-k}-R_mb^{-m}$. Note that since $k<m$, we have $R_kb^{-k}=\fp{R_mb^{-k}}$. So using the fact that $R_m$ is uniformly distributed on $\{0,1,\dots, b^m-1\}$ and  applying \Cref{ijs}  and a telescoping argument yields
\begin{align}
\bE_R[\xi_{k,m}]&=b^{-m}\sum_{i=0}^{b^m-1}\bE_W\big[(W(\fp{ib^{-k}}+b^{-k})-W(\fp{ib^{-k}}))(W((i+1)b^{-m})-W(ib^{-m}))\big]\nonumber\\
&=b^{-m}\sum_{i=0}^{b^m-1}\bE_W\big[W(b^{-k})(W((i+1)b^{-m}-\fp{ib^{-k}})-W(ib^{-m}-\fp{ib^{-k}}))\big]\nonumber\\
&\leq b^{-m}\sum_{j=-b^m}^{b^m-1}\bE_W\big[W(b^{-k})(W((j+1)b^{-m})-W(jb^{-m}))\big]\label{tel}\\
&=b^{-m}\bE_W\big[W(b^{-k})(W(1)-W(-1))\big]=b^{-m}((1+b^{-k})^{2H}-(1-b^{-k})^{2H})\nonumber\\
&\leq Lb^{-m-k},\nonumber
\end{align}where the last step follows from the mean-value theorem. Combining the above yields
\begin{equation}\label{etan expectation eq}
\bE_R[|\eta_n|]=\bE_R[\eta_n]=2\sum_{1\leq k<m\leq n}\alpha^{-m-k}\bE_R[\xi_{k,m}]\leq L\sum_{1\leq k<m\leq n}\alpha^{-m-k}b^{-m-k}=O(a^{n}),
\end{equation}
where $a=(\alpha b)^{-2}$ for $\alpha b<1$, and $a$ is an arbitrary number in $ (1,(\alpha b^H)^{-2})$ for $\alpha b\geq 1$.
Since $a<(\alpha^2b^{2H})^{-1}=:\beta$, it follows from \Cref{l2} with $C=(1-\alpha^2b^{2H})^{-1}$,  and $\gamma=p/2$ that  $\bE[|C_n+\eta_n|^{p/2}]=(1+o(1))C^{p/2} (\alpha^pb)^{-n}$. In view of \eqref{d1a}, this completes the proof for $1/2\le H<1$.

We now turn to the case $0<H< 1/2$, for which  $p/2\ge1$. We will  prove below that for $k<m$,
\begin{equation}\label{newl eq}
\Vert \xi_{m,k}\Vert_{L^{p/2}(\bP_R)}\leq L(b^{-m}+b^{-4Hm}).
\end{equation}
Then Minkowski's inequality and $0<H<1/2$ yields that
\begin{align*}
\Vert \eta_n\Vert_{L^{p/2}(\bP_R)}&\leq 2\sum_{1\leq k<m\leq n}\alpha^{-m-k}\Vert \xi_{k,m}\Vert_{L^{p/2}(\bP_R)}\leq L\sum_{1\leq k<m\leq n}\alpha^{-m-k}(b^{-m}+b^{-4Hm})=o((\alpha^2 b^{2H})^{-n}).
\end{align*}
Thus, by
 \eqref{d1a}  and another application of Minkowski's inequality,\begin{align*}\bE\bigg[\bigg|\int_0^1g_n(x)\,dW(x)\bigg|^p\bigg]^{2/p}&=c_H^{2/p}\big(C_n+O(\Vert \eta_n\Vert_{L^{p/2}(\bP_R)})\big)=c_H^{2/p}C_n+o((\alpha^2 b^{2H})^{-n}).
\end{align*}
Multiplying the preceding identity with $(\alpha^pb)^{2n/p}=(\alpha^2b^{2H})^n$ and using \eqref{d1} will then yield the assertion for $0<H< 1/2$.

It remains to establish \eqref{newl eq}. By arguing as in \eqref{tel}, we write
\begin{align*}
\lefteqn{\bE_R\big[|\xi_{m,k}|^{p/2}\big]}\\
&\leq b^{-m}\sum_{j=-b^{m}}^{b^m-1}\big|\bE_W\big[W(b^{-k})(W((j+1)b^{-m})-W(jb^{-m}))\big]\big|^{p/2}\\
&=b^{-m}\sum_{j=-b^{m}}^{b^m-1}\left|\big|(j+1)b^{-m}\big|^{2H}+\big|jb^{-m}-b^{-k}\big|^{2H}-\big|jb^{-m}\big|^{2H}-\big|(j+1)b^{-m}-b^{-k}\big|^{2H}\right|^{p/2}\\
&\leq Lb^{-m}\sum_{j=-b^{m}}^{b^m-1}\left(\left|\big|(j+1)b^{-m}\big|^{2H}-\big|jb^{-m}\big|^{2H}\right|^{p/2}+\left|\big|jb^{-m}-b^{-k}\big|^{2H}-\big|(j+1)b^{-m}-b^{-k}\big|^{2H}\right|^{p/2}\right)\\
&\leq Lb^{-m}\sum_{j=0}^{2b^m-1}\left(\big((j+1)b^{-m}\big)^{2H}-\big(jb^{-m}\big)^{2H}\right)^{p/2}\\
&\leq Lb^{-2m}+Lb^{-m}\sum_{j=2}^{2b^m-1}\left(\big((j+1)b^{-m}\big)^{2H}-\big(jb^{-m}\big)^{2H}\right)^{p/2}.
\end{align*}
By the mean-value theorem, for $j\geq 1$,
$$
\big((j+1)b^{-m}\big)^{2H}-\big(jb^{-m}\big)^{2H}\leq L\big(jb^{-m}\big)^{2H-1}b^{-m}=Lj^{2H-1}b^{-2mH}.
$$
Therefore, since $p>2$,
$$
b^{-m}\sum_{j=2}^{2b^m-1}\left(\big((j+1)b^{-m}\big)^{2H}-\big(jb^{-m}\big)^{2H}\right)^{p/2}\leq Lb^{-2m}\sum_{j=2}^{2b^m-1}j^{1-p/2}\leq Lb^{-pm/2}.
$$
Combining the above we obtain
$
\Vert \xi_{m,k}\Vert_{L^{p/2}(\bP_R)}\leq L(b^{-2m}+b^{-pm/2})^{2/p}\leq L(b^{-4Hm}+b^{-m})$ and thus \eqref{newl eq}.
\end{proof}

\begin{proposition}\label{c1}
\Cref{p1} holds with $g_n$ replaced by $f_n$, i.e.,
$$(\alpha^pb)^n\bE\bigg[\bigg|\int_0^1f_n(x)\,dW(x)\bigg|^p\bigg]=\frac{(1+o(1))c_H}{(1-\alpha^2b^{2H})^{p/2}}.$$
\end{proposition}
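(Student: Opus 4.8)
The plan is to leverage the decomposition $f_n=g_n-h_n$ from \Cref{l1}. By linearity of the Wiener integral and $W(0)=0$,
\begin{equation*}
\int_0^1f_n(x)\,dW(x)=\int_0^1g_n(x)\,dW(x)-h_nW(1),
\end{equation*}
where $h_n$ is $\cF_R$-measurable (hence a constant once the $R_m$ are fixed) and $W(1)$ is $\cF_W$-measurable. The idea is to show that the correction term $h_nW(1)$ is negligible in $L^p(\bP)$ relative to $\int_0^1g_n(x)\,dW(x)$, and then to transfer the asymptotics in \Cref{p1} from $g_n$ to $f_n$ by the reverse triangle inequality.

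First I would control $h_n$ deterministically. Since $\kappa$ is H\"older continuous with exponent $\tau\in(H,1]$, we have $|\kappa((R_m+1)b^{-m})-\kappa(R_mb^{-m})|\le Lb^{-m\tau}$ pointwise in $\omega_R$, hence
\begin{equation*}
|h_n|\le L\sum_{m=1}^n(\alpha b^\tau)^{-m}=:Lc_n.
\end{equation*}
In the present regime $K>H$ we have $\alpha b^H<1$, so $Q:=(\alpha b^H)^{-1}>1$. If $\alpha b^\tau\le1$ then $c_n\le n$; if $\alpha b^\tau>1$ then $c_n=O((\alpha b^\tau)^{-n})$; and since $\tau>H$ we have $(\alpha b^\tau)^{-1}<Q$, so in either case $c_n=o(Q^n)=o\big((\alpha b^H)^{-n}\big)$. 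Because $W(1)$ is Gaussian, $\|W(1)\|_{L^p(\bP_W)}<\infty$, and since $h_n$ depends only on the $R_m$,
\begin{equation*}
\big\|h_nW(1)\big\|_{L^p(\bP)}\le Lc_n\,\|W(1)\|_{L^p(\bP_W)}=o\big((\alpha b^H)^{-n}\big)=o\big((\alpha^pb)^{-n/p}\big),
\end{equation*}
using $p=1/H$, so that $(\alpha^pb)^{1/p}=\alpha b^H$.

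To finish, note that \Cref{p1} gives $(\alpha^pb)^n\bE\big[\big|\int_0^1g_n(x)\,dW(x)\big|^p\big]\to c_H(1-\alpha^2b^{2H})^{-p/2}$, which is a finite and strictly positive constant since $\alpha^2b^{2H}=(\alpha b^H)^2<1$; hence $\big\|\int_0^1g_n(x)\,dW(x)\big\|_{L^p(\bP)}=\Theta\big((\alpha^pb)^{-n/p}\big)$, i.e., of strictly larger order than $\|h_nW(1)\|_{L^p(\bP)}$. The reverse triangle inequality in $L^p(\bP)$ (valid as $p=1/H>1$) then yields
\begin{equation*}
\Big\|\int_0^1f_n(x)\,dW(x)\Big\|_{L^p(\bP)}=(1+o(1))\Big\|\int_0^1g_n(x)\,dW(x)\Big\|_{L^p(\bP)},
\end{equation*}
and raising this identity to the $p$-th power and multiplying by $(\alpha^pb)^n$, the claim follows from \Cref{p1}. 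I do not anticipate a genuine obstacle here; the only step requiring any care is the verification $c_n=o((\alpha b^H)^{-n})$, which is precisely where the hypothesis $\tau>H$ on the regularity of $\kappa$ enters.
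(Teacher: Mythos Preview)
Your argument is correct and essentially identical to the paper's: both bound $\|h_nW(1)\|_{L^p(\bP)}$ via the $\tau$-H\"older continuity of $\kappa$ to get $o((\alpha b^H)^{-n})$, then transfer the asymptotics from $g_n$ to $f_n$ by Minkowski's inequality. One small slip: your two cases for $c_n$ are swapped---when $\alpha b^\tau>1$ the series is bounded (each term $<1$), whereas when $\alpha b^\tau<1$ it grows like $(\alpha b^\tau)^{-n}$; either way $(\alpha b^\tau)^{-1}<(\alpha b^H)^{-1}$ since $\tau>H$, so your conclusion $c_n=o((\alpha b^H)^{-n})$ stands.
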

\begin{proof}
Recall from \Cref{l1} that $f_n=g_n-h_n$ where $h_n=\sum_{m=1}^n\alpha^{-m}(\kk((R_m+1)b^{-m})-\kk(R_mb^{-m}))$. By \eqref{expectation factor eq}, 
Minkowski's inequality and the $\tau$-H\"older continuity of $\kappa$, 
\begin{align*}\bigg\Vert \int_0^1h_n\,dW(x)\bigg\Vert_{L^p(\bP)}&=\Big(\bE_R\big[|h_n|^p\cdot\bE_W[|W(1)|^{p}]\big]\Big)^{1/p}\\
&=c_H^{1/p}\n{\sum_{m=1}^n\alpha^{-m}(\kk((R_m+1)b^{-m})-\kk(R_mb^{-m}))}_{L^p(\bP_R)}\\
&\leq c_H^{1/p}\sum_{m=1}^n\alpha^{-m}\n{(\kk((R_m+1)b^{-m})-\kk(R_mb^{-m}))}_{L^p(\bP_R)}\\
&\leq L\sum_{m=1}^n\alpha^{-m}b^{-m\tau}=o((\alpha b^H)^{-n}).
\end{align*}
Thus the assertion follows by applying Minkowski's inequality to $\Vert \int f_n\,dW\Vert_{L^p(\bP)}=\Vert \int (g_n-h_n)\,dW\Vert_{L^p(\bP)}$ and using \Cref{p1}. 
\end{proof}

\subsubsection{A concentration bound}\label{conce}

Having proved the convergence of the expected $p$-th variation, we now turn to the second part of the proof, which establishes a  concentration inequality and thus $\bP_W$-a.s.~convergence. Recall the notation $V_n$ from \eqref{Vn eq}. We also introduce the shorthand notation 
$$t^{(n)}_i=ib^{-n},\qquad \text{$n\in\bN$ and $i=0,\dots, b^n$,}
$$
and we will simply write $t_i$ in place of $t_i^{(n)}$ if the value of $n$ is clear from the context.
The following lemma can be proved analogously as  Lemma 10.2.2 in~\cite{MarcusRosen}; all one needs is to replace \cite[Equation (5.152)]{MarcusRosen} with the Borell--TIS inequality in the form of Theorem 2.1.1 in \cite{AdlerTaylor}.

\begin{lemma}\label{Marcus Rosen lemma}
Let $q>1$ with $p^{-1}+q^{-1}=1$ and define
\begin{equation*}
M_n:=\bigg\{(\mu_1,\dots,\mu_{b^n})\in\bR^{b^n}:\sum_{j=1}^{b^n}|\mu_j|^q\le1\bigg\}
\end{equation*}
and 
\begin{align}
\sigma_n^2:=\sup_{(\mu_k)\in M_n}\sum_{i=1}^{b^n}\sum_{j=1}^{b^n}\mu_i\mu_j\bE[(X(t_i)-X(t_{i-1}))(X(t_j)-X(t_{j-1}))].\label{s}
\end{align}
Then
\begin{align}\label{ci1}
\bP\left(|V_n^H-\bE[V_n^H]|>s\right)\leq 2e^{-\frac{s^2}{2\sigma_n^2}}.
\end{align}
\end{lemma}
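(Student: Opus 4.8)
The plan is to represent $V_n^H$ as the supremum of an explicit centered Gaussian process over the compact index set $M_n$, and then to invoke the Borell--TIS inequality. Recall that in part~(b) of \Cref{main thm} we are in the regime $K>H$, so $p=1/H$ and in particular $pH=1$. Hence, writing $\Delta_i:=X(t_i)-X(t_{i-1})$ for $i=1,\dots,b^n$,
$$V_n^H=\Big(\sum_{i=1}^{b^n}|\Delta_i|^p\Big)^{1/p}=\n{(\Delta_1,\dots,\Delta_{b^n})}_{\ell^p}.$$
By the $\ell^p$--$\ell^q$ duality (H\"older's inequality together with its sharpness), for each fixed sample point this equals
$$V_n^H=\sup_{(\mu_k)\in M_n}\sum_{i=1}^{b^n}\mu_i\Delta_i=\sup_{\mu\in M_n}Y_\mu,\qquad Y_\mu:=\sum_{i=1}^{b^n}\mu_i\big(X(t_i)-X(t_{i-1})\big).$$

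The family $(Y_\mu)_{\mu\in M_n}$ is a centered Gaussian process depending linearly, hence continuously, on the index $\mu$ ranging over the compact set $M_n\subset\bR^{b^n}$; it is therefore separable, its supremum over $M_n$ is attained and equals $V_n^H$, and this supremum is $\bP$-a.s.\ finite with moments of all orders because the underlying Gaussian vector $(\Delta_1,\dots,\Delta_{b^n})$ lives in a finite-dimensional space. Its variance function is $\var(Y_\mu)=\sum_{i,j}\mu_i\mu_j\,\bE[(X(t_i)-X(t_{i-1}))(X(t_j)-X(t_{j-1}))]$, so that $\sup_{\mu\in M_n}\var(Y_\mu)=\sigma_n^2<\infty$, with $\sigma_n^2$ exactly as in \eqref{s}.

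Now \eqref{ci1} follows directly from the Borell--TIS inequality (Theorem~2.1.1 in \cite{AdlerTaylor}) applied to the centered Gaussian process $(Y_\mu)_{\mu\in M_n}$: it yields $\bE[\sup_\mu Y_\mu]<\infty$ together with $\bP(|\sup_\mu Y_\mu-\bE[\sup_\mu Y_\mu]|>s)\le 2e^{-s^2/(2\sigma_n^2)}$ for every $s>0$, which upon substituting $\sup_\mu Y_\mu=V_n^H$ is precisely the claimed bound. This is the same scheme as in the proof of Lemma~10.2.2 of \cite{MarcusRosen}, with the Gaussian concentration estimate used there replaced by Borell--TIS. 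I do not expect any genuine obstacle here; the only points deserving a line of comment are the identity $pH=1$ in the regime of part~(b) — which is what turns $V_n^H$ into an honest $\ell^p$-norm and hence a supremum of a Gaussian process — and the (immediate, finite-dimensional) verification of the hypotheses of Borell--TIS.
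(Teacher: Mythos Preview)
Your proposal is correct and follows essentially the same approach as the paper: the paper simply remarks that the proof goes as in Lemma~10.2.2 of \cite{MarcusRosen} with the Gaussian concentration estimate there replaced by the Borell--TIS inequality, and your write-up spells out exactly this argument (the $\ell^p$--$\ell^q$ duality representation of $V_n^H$ as a supremum of the linear Gaussian process $Y_\mu$, followed by Borell--TIS).
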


The preceding lemma will be needed in the proof of the following proposition. In the sequel,  $\lambda$ will denote a generic constant in $(0,1)$ that may depend only on $\alpha,b,H$ and that may differ from occurrence to occurrence.

\begin{proposition}
Suppose that \eqref{ci1} holds and $\sigma_n=O(\lambda^n)$ for some $\lambda\in(0,1)$. Then $V_{n}$ converges almost surely to
$$C_p:=\frac{c_H}{(1-\alpha^2b^{2/p})^{p/2}}=\frac{c_H}{(1-\alpha^2b^{2H})^{1/(2H)}}.$$ That is, \Cref{main thm} holds for $t=1$.\label{l6}
\end{proposition}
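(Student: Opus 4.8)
The plan is to upgrade the already-established convergence of the mean to almost sure convergence of $V_n$, using the concentration estimate \eqref{ci1} together with a uniform $L^2$-bound that it will supply for free. Recall that in part (b) we have $p=1/H$, so the two displayed expressions for $C_p$ coincide, and that \Cref{l1} together with \Cref{c1} already gives $\bE_W[V_n]=(\alpha^pb)^n\bE[|\int_0^1 f_n(x)\,dW(x)|^p]\to c_H/(1-\alpha^2b^{2H})^{p/2}=C_p$; since $V_n$ is $\cF_W$-measurable, $\bE[V_n]=\bE_W[V_n]\to C_p$. First I would record two consequences of \eqref{ci1} with $\sigma_n=O(\lambda^n)$, obtained by integrating the sub-Gaussian tail: the moment bounds $\bE[|V_n^H-\bE[V_n^H]|^r]=O(\sigma_n^r)$ for every $r\ge1$ (in particular the variance of $V_n^H$ is $O(\lambda^{2n})$); and, since $\sum_n 2e^{-\eps^2/(2\sigma_n^2)}<\infty$ for each $\eps>0$, the Borel--Cantelli conclusion that $V_n^H-\bE[V_n^H]\to0$ $\bP$-a.s.

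It then remains to show that the deterministic sequence $\bE[V_n^H]$ converges to $C_p^H$; once this is known, $V_n^H\to C_p^H$ $\bP$-a.s., and since $t\mapsto t^{1/H}$ is continuous on $[0,\infty)$ we obtain $V_n\to C_p$ $\bP$-a.s., which is the claim for $t=1$. The upper bound is immediate from concavity of $t\mapsto t^H$ and Jensen's inequality: $\bE[V_n^H]\le(\bE[V_n])^H\to C_p^H$; in particular $\bE[V_n^H]$ is bounded. Plugging this back, together with $(x+y)^{2/H}\le2^{2/H-1}(x^{2/H}+y^{2/H})$ (valid since $2/H>1$) and the moment bound above, yields
\[
\bE[V_n^2]=\bE\big[(V_n^H)^{2/H}\big]\le2^{2/H-1}\Big((\bE[V_n^H])^{2/H}+\bE\big[|V_n^H-\bE[V_n^H]|^{2/H}\big]\Big)=O(1),
\]
so $\{V_n\}$ is bounded in $L^2$ and hence uniformly integrable.

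For the matching lower bound I would use this uniform integrability to transfer the control we have on $\bE[V_n]$ to $\bE[V_n^H]$. Since $\bE[V_n^H]$ is bounded and $V_n^H-\bE[V_n^H]\to0$ in probability, uniform continuity of $t\mapsto t^{1/H}$ on compact sets gives $V_n-(\bE[V_n^H])^{1/H}\to0$ in probability; as $\{V_n\}$ is uniformly integrable and $(\bE[V_n^H])^{1/H}$ is a bounded deterministic sequence, the convergence also holds in $L^1$, so $\bE[V_n]-(\bE[V_n^H])^{1/H}\to0$. Combined with $\bE[V_n]\to C_p$ this forces $(\bE[V_n^H])^{1/H}\to C_p$, i.e.\ $\bE[V_n^H]\to C_p^H$, as required. (Equivalently one can argue along subsequences: any subsequence of the bounded sequence $(\bE[V_n^H])$ has a further subsequence converging to some $\ell$, along which $V_n\to\ell^{1/H}$ in probability; uniform integrability and $\bE[V_n]\to C_p$ then force $\ell^{1/H}=C_p$, so the full sequence tends to $C_p^H$.)

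The step I expect to be the \emph{main obstacle} is this last transfer: Jensen's inequality only delivers $\limsup_n\bE[V_n^H]\le C_p^H$, and reversing it is not elementary in isolation. The point that makes it go through is that the sub-Gaussian concentration of $V_n^H$ from \eqref{ci1} produces not merely a vanishing variance but a genuine uniform $L^2$-bound on $V_n$ itself, which is precisely the uniform integrability needed to commute the limit with the nonlinear change of normalization between $V_n$ and $V_n^H$.
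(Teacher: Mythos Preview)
Your proposal is correct and follows essentially the same strategy as the paper: combine the already-proved convergence $\bE[V_n]\to C_p$ with the sub-Gaussian concentration \eqref{ci1} to obtain (i) uniform integrability of $(V_n)$, (ii) $V_n^H-\bE[V_n^H]\to0$ $\bP$-a.s.\ via Borel--Cantelli, and (iii) identification of the limit of $\bE[V_n^H]$ by transferring the known limit of $\bE[V_n]$. The only cosmetic differences are that the paper establishes uniform integrability by a direct tail-integration of $\bP(V_n>c)$ rather than via your $L^2$-bound, and it phrases the limit identification as the subsequence argument you mention parenthetically rather than your direct ``$V_n-(\bE[V_n^H])^{1/H}\to0$ in $L^1$'' version; neither of these constitutes a genuinely different route.
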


\begin{proof}
Combining \Cref{c1} and \Cref{l1} yields 
$
\lim_{n}\bE[V_{n}]= C_p$. We also claim that the sequence $(V_n)_{n\in\bN}$ is uniformly integrable. To see why, choose $n_0\in\bN$ such that $\bE[V_n]\le C_p+1$ and $\sigma_n\le \sqrt{2\pi}$ for all $n\ge n_0$. Then, for $n\ge n_0$ and $c>(C_p+2)^p$,
\begin{align*}
\bE[V_n\Ind{\{V_n>c\}}]&=\int_c^\infty\bP(V_n>r)\,dr=p\int_{c^{1/p}-C_p-2}\bP(V_n>(C_p+2+s)^p)(C_p+2+s)^{p-1}\,ds\\
&\le p\int_{c^{1/p}-C_p-2}^\infty \bP\left(|V_n^H-\bE[V_n^H]|>s+\frac{\sigma_n}{\sqrt{2\pi}}\right)(C_p+2+s)^{p-1}\,ds\\
&\le 2p\int_{c^{1/p}-C_p-2}^\infty e^{-s^2/4\pi}(C_p+2+s)^{p-1}\,ds,
\end{align*}
where we have used \eqref{ci1} in the final step. Clearly, the latter integral can be made arbitrarily small by increasing  $c$, which proves the claimed uniform integrability.

 Next, since $\bE[V_{n}^H]\leq \bE[V_{n}]^H$, the sequence $(\bE[V_{n}^H])_{n\in\bN}$ is bounded. Suppose there is a subsequence $(n_k)$ such that $\bE[V_{n_k}^H]$ converges to the finite limit $\ell$  as $k\ua\infty$. Then \eqref{ci1} with the choice {$s_n=n\sigma_n$} and the  Borel--Cantelli lemma  give $V_{n_k}^H\to \ell$ $\bP$-a.s.~and in turn $V_{n_k}\to \ell^p$ $\bP$-a.s. Due to the established uniform integrability, the latter convergence also holds in $L^1$, and we obtain $\ell^p=C_p$. It follows that $\ell$ is the unique accumulation point of the sequence $(\bE[V^H_n])_{n\in\bN}$  and  equal to $C_p^{1/p}$. Therefore, we can replace the above subsequence $(n_k)$ by $\bN$, so that $V_{n}\to C_p$ $\bP$-a.s.~as required. 
\end{proof}

In the remainder of this section, we prove that $\sigma_n^2=O(\lambda^n)$ for some $\lambda\in(0,1)$. The first obvious step is to plug \eqref{WW def eq} into \eqref{s}. Fixing $n\in\bN$ and using the shorthand notation
\begin{equation}\label{rhomkij}
\rho^{(m,k)}_{i,j}:=\bE[(B(\fp{b^m t_i})-B(\fp{b^m t_{i-1}}))(B(\fp{b^k t_j})-B(\fp{b^k t_{j-1}}))],
\end{equation}
this gives
\begin{align}
\sigma_n^2&=\sup_{(\mu_k)\in M_n}\sum_{i=1}^{b^n}\sum_{j=1}^{b^n}\mu_i\mu_j\sum_{m=0}^{n-1}\sum_{k=0}^{n-1}\alpha^{m+k}\rho^{(m,k)}_{i,j}=\sup_{(\mu_k)\in M_n}\sum_{m=0}^{n-1}\sum_{k=0}^{n-1}\alpha^{m+k}\sum_{i=1}^{b^n}\sum_{j=1}^{b^n}\mu_i\mu_j\rho^{(m,k)}_{i,j}\nonumber\\
&\leq \sup_{(\mu_i)\in M_n}\sup_{(\nu_j)\in M_n}\sum_{m=0}^{n-1}\sum_{k=0}^{n-1}\alpha^{m+k}\sum_{i=1}^{b^n}\sum_{j=1}^{b^n}\mu_i\nu_j\rho^{(m,k)}_{i,j}.\label{s2}
\end{align}

\begin{lemma}\label{cov}
Let $1/2\le H<1$ and consider two disjoint intervals of lengths $b^{-m},b^{-k}$ in $[0,1]$ that are apart by the distance $\delta$. Then the covariance of the increments of $W$ on these two intervals is bounded by $Lb^{-m-k}\delta^{2H-2}$.
\end{lemma}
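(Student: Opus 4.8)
The plan is to reduce the statement to a direct estimate on the covariance of fractional Brownian increments over two well-separated intervals. Write the first interval as $[s,s+b^{-m}]$ and the second as $[u,u+b^{-k}]$, with $\delta=\operatorname{dist}([s,s+b^{-m}],[u,u+b^{-k}])$, so that without loss of generality $u\ge s+b^{-m}$ and $u-s-b^{-m}=\delta$. Using the covariance structure $\bE_W[W(a)W(b)]=\tfrac12(a^{2H}+b^{2H}-|a-b|^{2H})$, the covariance of the two increments equals a second-order difference of the function $r\mapsto r^{2H}$:
\begin{equation*}
\bE_W\big[(W(s+b^{-m})-W(s))(W(u+b^{-k})-W(u))\big]=-\tfrac12\Big(\psi(u-s-b^{-m})-\psi(u-s)-\psi(u+b^{-k}-s-b^{-m})+\psi(u+b^{-k}-s)\Big),
\end{equation*}
where $\psi(r)=r^{2H}$. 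Every argument appearing here lies in $[\delta,\delta+b^{-m}+b^{-k}]$, hence is bounded away from $0$ by $\delta$.

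Next I would recognize the right-hand side as an increment-of-increments and bound it by the supremum of $|\psi''|$ over the relevant range times the product of the interval lengths. Concretely, for a $C^2$ function $\psi$ one has the identity
\begin{equation*}
\psi(x+h_1+h_2)-\psi(x+h_1)-\psi(x+h_2)+\psi(x)=\int_0^{h_1}\!\!\int_0^{h_2}\psi''(x+y_1+y_2)\,dy_1\,dy_2,
\end{equation*}
applied with $x=u-s-b^{-m}=\delta$ (up to relabeling), $h_1=b^{-m}$, $h_2=b^{-k}$. Since $\psi''(r)=2H(2H-1)r^{2H-2}$ and $1/2\le H<1$ gives $2H-2\in[-1,0)$, the function $|\psi''|$ is decreasing on $(0,\infty)$, so on the integration region $\psi''(\delta+y_1+y_2)$ is bounded in absolute value by $|2H(2H-1)|\,\delta^{2H-2}$. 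Carrying out the double integral yields a bound of the form $L\,b^{-m}b^{-k}\delta^{2H-2}$ with $L=2H(2H-1)$ (or simply $L=2$), which is exactly the claimed estimate. One should also note the case $H=1/2$ separately, where $\psi''\equiv 0$ and the covariance of increments over disjoint intervals vanishes, so the bound holds trivially.

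The only mild subtlety is bookkeeping about which endpoint configuration is relevant: depending on the relative position of the two intervals, the ``base point'' of the second-order difference is $\delta$ rather than, say, $u-s$, and one must check that in all cases every argument of $\psi$ that survives in the second difference is at least $\delta$ so that the monotonicity of $|\psi''|$ can be invoked with the worst-case value $\delta^{2H-2}$. This is routine, and I expect no real obstacle — the main (and only) point is the second-order Taylor/mean-value argument for $r\mapsto r^{2H}$. I would state the lemma's proof in essentially two lines: express the covariance as the second difference of $r\mapsto r^{2H}$, then apply the integral representation of the second difference together with monotonicity of $|\psi''|$ on $(0,\infty)$.
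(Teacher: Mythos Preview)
Your proposal is correct and follows essentially the same approach as the paper: both recognize the covariance as a second-order difference of $\psi(r)=r^{2H}$ and bound it via the second derivative $\psi''(r)=2H(2H-1)r^{2H-2}$, using that $|\psi''|$ is decreasing so the worst case occurs at $r=\delta$. The paper applies the mean-value theorem twice (first to extract a factor $b^{-m}$, then to bound the resulting difference of first derivatives), whereas you use the double-integral representation of the second difference; these are equivalent formulations of the same second-order Taylor argument, with your version slightly cleaner in that it does not require the preliminary reduction to $m>k$. (There is a harmless sign slip in your displayed formula---the covariance equals $+\tfrac12$ times the second difference, not $-\tfrac12$---but since $\psi''\ge0$ for $H\ge1/2$ this does not affect the bound.)
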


\begin{proof}
We assume that $m>k$ and the two intervals are denoted $[u,v],[s,t]$ with $0\leq u<v=u+b^{-m}<s<t=s+b^{-k}\leq 1$. The proofs for the other cases are analogous. 

Since $H\ge 1/2$, the function $x\mapsto x^{2H}$ is convex and its derivative is bounded by $L$ on $[0,1]$. We also record here the standard fact that
\begin{align}
\bE_W[(W(v)-W(u))(W(t)-W(s))]=\frac{1}{2}(|v-s|^{2H}+|u-t|^{2H}-|v-t|^{2H}-|u-s|^{2H}).\label{cov!}
\end{align}Observe that $\delta=s-v<s-u<t-v<t-u$. By the mean-value theorem, there are $x_1\in(s-v,s-u),x_2\in(t-v,t-u)$ such that
\begin{align*}
\bE_W[(W(v)-W(u))(W(t)-W(s))]&=Lb^{-m}(x_2^{2H-1}-x_1^{2H-1})\nonumber\\
&\leq Lb^{-m}(b^{-k}+b^{-m})\delta^{2H-2}\leq Lb^{-k-m}\delta^{2H-2},
\end{align*}completing the proof.
\end{proof}

For the case $0<H< 1/2$, the following lemma, in a similar sense as \Cref{cov}, gives estimates of covariances of increments of $W$ that are sufficiently apart.

\begin{lemma}\label{l5}
For $0<H\leq 1/2$ there exists a constant $L>0$ such that for all $1\leq i\leq b^n$,
\begin{align*}
\sum_{j=1\atop |i-j|>2}^{b^n}\Big|\bE_W\big[(W(t_i)-W(t_{i-1}))(W(t_j)-W(t_{j-1}))\big]\Big|\leq Lb^{-2nH}.\label{ij}
\end{align*}
\end{lemma}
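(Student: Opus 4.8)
The plan is to exploit the standard small-$H$ decay of the covariance of non-adjacent fractional Brownian increments. Write $a_\ell := \bE_W[(W(t_i) - W(t_{i-1}))(W(t_{i+\ell}) - W(t_{i+\ell-1}))]$, so that by stationarity of increments $a_\ell$ depends only on $\ell$ (and on $n$ through the mesh $b^{-n}$). Using the explicit covariance formula \eqref{cov!}, one has $a_\ell = \tfrac12 b^{-2nH}\bigl(|\ell+1|^{2H} + |\ell-1|^{2H} - 2|\ell|^{2H}\bigr)$; thus $a_\ell = b^{-2nH}\psi(\ell)$ where $\psi(\ell) = \tfrac12(|\ell+1|^{2H} + |\ell-1|^{2H} - 2|\ell|^{2H})$ is, up to sign, the discrete second difference of the concave function $x\mapsto |x|^{2H}$. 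For $H\le 1/2$ the map $x\mapsto |x|^{2H}$ is concave on $(0,\infty)$, so $\psi(\ell)\le 0$ for $\ell\ge 1$, and a second-order Taylor estimate gives $|\psi(\ell)| \le L\,\ell^{2H-2}$ for $\ell\ge 2$.

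First I would reduce the sum over $j$ with $|i-j|>2$ to a sum over $\ell = |i-j| \ge 3$, bounding the number of $j\in\{1,\dots,b^n\}$ with $|i-j| = \ell$ by $2$. This yields
\begin{equation*}
\sum_{j=1\atop |i-j|>2}^{b^n}\bigl|\bE_W\bigl[(W(t_i)-W(t_{i-1}))(W(t_j)-W(t_{j-1}))\bigr]\bigr|
\;\le\; 2\sum_{\ell=3}^{b^n} |a_\ell|
\;=\; 2 b^{-2nH}\sum_{\ell=3}^{b^n} |\psi(\ell)|.
\end{equation*}
Next I would invoke the bound $|\psi(\ell)|\le L\ell^{2H-2}$ and use $2H-2 < -1$ (strict since $H<1$, and here $H\le 1/2$ so in fact $2H-2\le -1$; the case $H=1/2$ requires a touch of care since then $\psi(\ell)=0$ for all $\ell\ge 1$, so the sum vanishes outright). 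For $H<1/2$ one has $\sum_{\ell\ge 3}\ell^{2H-2} < \infty$, hence $\sum_{\ell=3}^{b^n}|\psi(\ell)| \le L$ uniformly in $n$, giving the claimed bound $Lb^{-2nH}$. For the borderline $H=1/2$ the left-hand side is $0\le Lb^{-2nH}$ trivially.

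The only mildly delicate point is the Taylor estimate $|\psi(\ell)|\le L\ell^{2H-2}$: one writes $\psi(\ell) = \tfrac12\int_{-1}^{1}(1-|u|)\,g''(\ell+u)\,du$ with $g(x)=x^{2H}$ and $g''(x) = 2H(2H-1)x^{2H-2}$, noting $g''$ has constant sign and $|g''(\ell+u)|\le L\ell^{2H-2}$ for $\ell\ge 2$, $|u|\le 1$, since then $\ell+u\ge \ell/2$. This is entirely routine. I do not anticipate a real obstacle here; the lemma is essentially the observation that, for $H\le 1/2$, fBm increments are \emph{summably} correlated and the per-increment variance is $b^{-2nH}$, so the total off-diagonal contribution is of the same order $b^{-2nH}$ as a single diagonal term.
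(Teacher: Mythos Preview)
Your proposal is correct and follows essentially the same route as the paper. The paper reduces by stationarity and symmetry to the case $i=1$ and then bounds the second difference $2(j-1)^{2H}-j^{2H}-(j-2)^{2H}$ by $L(j-2)^{2H-2}$ via the mean-value theorem, while you keep general $i$, bound by twice the one-sided sum, and obtain the same $\ell^{2H-2}$ decay via the integral-remainder form of Taylor's theorem; both conclude by summability of $\ell^{2H-2}$ for $H<1/2$ and by the vanishing of off-diagonal covariances for $H=1/2$.
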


\begin{proof}
By the symmetry and stationarity of the increments, it suffices to consider the case $i=1$. That is, it suffices to show that
$$
T_n:=\sum_{j=3}^{b^n}\Big|\bE_W\big[W(t_1)(W(t_j)-W(t_{j-1}))\big]\Big|\leq Lb^{-2nH}.
$$
This obviously holds for $H=1/2$. For $0<H<1/2$, we use \eqref{cov!} and the mean-value theorem to get
\begin{align*}
T_{n}&=\frac{1}{2}\sum_{j=3}^{b^n}b^{-2nH}\left(2(j-1)^{2H}-j^{2H}-(j-2)^{2H}\right)\leq Lb^{-2nH}\sum_{j=3}^{b^n}(j-2)^{2H-2}\le Lb^{-2nH}.
\end{align*}This concludes the proof.
\end{proof}

For a function $f:[0,1]\to\bR$ and $0\le s< t$ with $(s,t)\notin\bN_0\times\bN$, we introduce the notation
\begin{equation}\label{Delta def}
\Delta f(s,t)=\begin{cases}f(1)-f(\fp{s})&\text{if $t\in\bN$,}\\
f(\fp{t})-f(0)&\text{if $s\in\bN_0$,}\\
f(\fp{t})-f(\fp{s})&\text{otherwise.}
\end{cases}
\end{equation}
Then we have the relations
\begin{equation}\label{Delta B Delta W relation}
\Delta B(s,t)=B(\fp{t})-B(\fp{s})\qquad \text{and}\qquad \Delta W(s,t)=\Delta B(s,t)+\Delta \kappa(s,t)W(1).
\end{equation}
So $\rho^{(m,k)}_{i,j}$ from \eqref{rhomkij} has the alternative expression
$$\rho^{(m,k)}_{i,j}=\bE_W[\Delta B(b^m t_{i-1}, b^m t_i)\cdot  \Delta B(b^k t_{j-1},b^k t_j)].
$$
In the same way, we let
\begin{equation*}
\tilde\rho^{(m,k)}_{i,j}:=\bE_W[\Delta W(b^m t_{i-1}, b^m t_i)\cdot  \Delta W(b^k t_{j-1},b^k t_j)].
\end{equation*}
These quantities are well defined as long as $i,j\ge1$ and  $m,k<n$, because then $(b^m t_{i-1}, b^m t_i)$ and $(b^k t_{j-1},b^k t_j)$ do not belong to $\bN_0\times\bN$.

\begin{lemma}\label{l9}
For $H\in(0,1)$, let $h:=(2H)\wedge1$. Then the following inequalities hold.
\begin{enumerate}
\item For $k=0,\dots, n-1$,
\begin{equation}\label{l9 eq1}
\big|\rho^{(0,k)}_{i,j}-\tilde \rho^{(0,k)}_{i,j}\big|\le L\big(b^{(k-2n)\tau}+b^{-n\tau+(k-n)h}+b^{(k-n)\tau-nh}\big).
\end{equation}
\item As $n\ua\infty$, we have for some $\lambda\in(0,1)$ independent of $n$,
\begin{align}\label{lq asser eq}
\sum_{k=0}^{n-1}\alpha^{k}\sup_{(\mu_i)\in M_n}\sup_{(\nu_j)\in M_n}\sum_{i=1}^{b^n}\sum_{j=1}^{b^n}|\mu_i||\nu_j|\big|\rho^{(0,k)}_{i,j}-\tilde \rho^{(0,k)}_{i,j}\big|=O(\lambda^n).
\end{align}
\end{enumerate}

\end{lemma}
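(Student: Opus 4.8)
\textbf{Proof plan for \Cref{l9}.}

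The key identity is the bilinear expansion of $\tilde\rho$ in terms of $\rho$ coming from the second relation in \eqref{Delta B Delta W relation}: since $\Delta W(s,t)=\Delta B(s,t)+\Delta\kappa(s,t)W(1)$, expanding the product and using $\bE_W[\Delta B(s,t)W(1)]=\bE_W[(W(t)-W(s)-(\kappa(t)-\kappa(s))W(1))W(1)]$ (with the obvious modifications at the boundary encoded in $\Delta$) gives
\begin{equation*}
\tilde\rho^{(0,k)}_{i,j}-\rho^{(0,k)}_{i,j}=\Delta\kappa(b^0 t_{i-1},t_i)\,\bE_W[W(1)\Delta W(b^k t_{j-1},b^k t_j)]+\Delta\kappa(b^k t_{j-1},b^k t_j)\,\bE_W[W(1)\Delta B(t_{i-1},t_i)]+\Delta\kappa(t_{i-1},t_i)\Delta\kappa(b^k t_{j-1},b^k t_j)\,\bE_W[W(1)^2].
\end{equation*}
For part (a), I would bound each of the three terms separately. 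The $\kappa$-increments are controlled by $\tau$-H\"older continuity: $|\Delta\kappa(t_{i-1},t_i)|\le Lb^{-n\tau}$ (an increment over a length-$b^{-n}$ interval), and $|\Delta\kappa(b^k t_{j-1},b^k t_j)|\le Lb^{-(n-k)\tau}$ (an increment over a length-$b^{k-n}$ interval, possibly wrapping around, but H\"older continuity on $[0,1]$ with $\kappa(0)=\kappa(1)$ still gives the bound). The Gaussian covariances $\bE_W[W(1)(W(a+\ell)-W(a))]=\tfrac12(|1-a|^{2H}-|1-a-\ell|^{2H}-|a|^{2H}+|a+\ell|^{2H})$ are bounded, by the mean-value theorem, by $L\ell^{h}$ with $h=(2H)\wedge1$ (the worst case being $a$ near $0$ or $1$, where one gets $\ell^{2H}$ rather than $\ell$), so $|\bE_W[W(1)\Delta W(b^k t_{j-1},b^k t_j)]|\le Lb^{(k-n)h}$ and similarly $|\bE_W[W(1)\Delta B(t_{i-1},t_i)]|\le Lb^{-nh}$. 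Multiplying the three pairs of bounds yields exactly the three terms $b^{(k-2n)\tau}$, $b^{-n\tau+(k-n)h}$, and $b^{(k-n)\tau-nh}$ in \eqref{l9 eq1}.

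For part (b), the double sum over $i,j$ has $b^n\times b^n$ terms, and $|\mu_i||\nu_j|$ is a product of $\ell^q$-unit vectors; the crude bound $\sum_{i,j}|\mu_i||\nu_j|\le (\sum_i|\mu_i|)(\sum_j|\nu_j|)\le (b^n)^{2/p}\cdot$const (by H\"older, since $\|\mu\|_{\ell^1}\le (b^n)^{1/p}\|\mu\|_{\ell^q}$) turns the uniform bound from part (a) into
\begin{equation*}
\sum_{k=0}^{n-1}\alpha^k\cdot b^{2n/p}\cdot L\big(b^{(k-2n)\tau}+b^{-n\tau+(k-n)h}+b^{(k-n)\tau-nh}\big).
\end{equation*}
One then checks that each of the three resulting geometric sums in $k$ is $O(\lambda^n)$ for some $\lambda<1$. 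Recall $p=1/(K\wedge H)=1/H$ here (since we are in case (b), $K>H$), so $2/p=2H\le h$ and $2/p<1$; also $\alpha<1$ and $\tau>H$. For the first term, $\sum_k\alpha^k b^{2n/p-2n\tau+k\tau}$: the $k$-sum is dominated by its largest term, and since $\alpha b^\tau$ may exceed $1$ one bounds $\alpha^k b^{k\tau}\le (\alpha b^\tau)^n$, giving $b^{2n/p}(\alpha b^\tau)^n b^{-2n\tau}=(\alpha)^n b^{2n/p-n\tau}$, and $2/p=2H<2\tau\le\dots$; more carefully, since $\alpha b^H<1$ in case (b) and $\tau>H$, the exponent base works out to be $<1$. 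The same kind of bookkeeping handles the other two terms using $2/p\le h$ and $\alpha b^H<1$. Summing finitely many $O(\lambda^n)$ terms (with a common $\lambda<1$) gives \eqref{lq asser eq}.

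The main obstacle is purely the bookkeeping in part (b): one must verify that, \emph{simultaneously} in all three terms and uniformly over the allowed ranges, the combination of the loss factor $b^{2n/p}$ from the crude $\ell^1$–$\ell^q$ bound, the geometric weights $\alpha^k$, and the $\tau$- and $h$-dependent exponents produces a base strictly below $1$. This uses in an essential way that we are in the regime $K>H$ (so $p=1/H$, $2/p=2H$, and $\alpha b^H<1$) together with $\tau>H$; nothing here is deep, but the inequalities must be chased carefully, and it is worth isolating the worst case $k=n-1$ and the boundary indices $i,j\in\{1,b^n\}$ (where the $2H$ rather than $1$ exponent appears) as the governing cases.
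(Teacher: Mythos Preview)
Your proposal is correct and follows essentially the same route as the paper: expand $\tilde\rho-\rho$ into the three cross terms coming from $\Delta W=\Delta B+\Delta\kappa\cdot W(1)$, bound the $\kappa$-increments by $\tau$-H\"older continuity and the covariances $\bE_W[W(1)\,\Delta W]$ by $h$-H\"older continuity of \eqref{cov!}, then for part (b) use the crude H\"older bound $\sum_i|\mu_i|\le b^{nH}$ and check the three geometric sums via $\tau\wedge h>H$ together with $\alpha b^H<1$.

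One correction: you justify $|\Delta\kappa(b^k t_{j-1},b^k t_j)|\le Lb^{(k-n)\tau}$ in the wrap-around case by writing ``$\kappa(0)=\kappa(1)$'', but in fact $\kappa(0)=0$ and $\kappa(1)=1$. The bound is nonetheless correct, because the definition \eqref{Delta def} never actually wraps around: if $b^k t_j\in\bN$ it sets $\Delta\kappa=\kappa(1)-\kappa(\{b^k t_{j-1}\})$ with $\{b^k t_{j-1}\}=1-b^{k-n}$, and symmetrically at the other boundary, so the increment is always taken over a genuine sub-interval of $[0,1]$ of length $b^{k-n}$. (Also, your displayed identity for $\tilde\rho-\rho$ double-counts the $\Delta\kappa\,\Delta\kappa'$ term---if the first summand carries $\Delta W$ rather than $\Delta B$, the third summand is already absorbed into it---but this is harmless for the bound.)
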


\begin{proof} (a) We get from \eqref{Delta B Delta W relation} that $|\rho^{(0,k)}_{i,j}-\tilde \rho^{(0,k)}_{i,j}|\le I+J$, where 
\begin{align*}
I&:=|(\kk(t_i)-\kk(t_{i-1}))\Delta\kk(b^k t_{j-1},b^k t_j)|,\\
J&:=\big|(\kk(t_i)-\kk(t_{i-1}))\bE_W[W(1) \Delta W(b^k t_{j-1},b^k t_j)]\big|+\big|\Delta\kk(b^k t_{j-1},b^k t_j)\bE_W[W(1)(W(t_i)-W(t_{i-1}))]\big|.
\end{align*}
The definition \eqref{Delta def} and the $\tau$-H\"older continuity of $\kappa$ imply that  $|\Delta\kk(b^k t_{j-1},b^k t_j)|\le Lb^{(k-n)\tau}$ and in turn $I\leq Lb^{(k-2n)\tau}$. To deal with $J$, note  that the covariance \eqref{cov!} is H\"older continuous with exponent $h$ in each of its arguments. This gives $J\le L(b^{-n\tau+(k-n)h}+b^{(k-n)\tau-nh})$ and proves (a).
 
 To prove part (b), we note first that for $(\mu_i)\in M_n$, due to H\"older's inequality, 
 \begin{align}\label{Mn Hoelder eq}
  \sum_{i=1}^{b^n}|\mu_i|\le \bigg(\sum_{i=1}^{b^n}|\mu_i|^q\bigg)^{1/q}\bigg(\sum_{i=1}^{b^n}1\bigg)^{1/p}\le b^{nH}.
 \end{align}
For the purpose of this proof, let us denote the expression on the left-hand side of \eqref{lq asser eq}
 by $S_n$ and the right-hand side of \eqref{l9 eq1} by $K_{n,k}$. 
Then \eqref{Mn Hoelder eq} and part (a) yield that
$$S_n\leq L\sum_{k=0}^{n-1}\alpha^{k}\bigg(\sup_{(\mu_i)\in M_n}\sum_{i=1}^{b^n}|\mu_i|\bigg)^2K_{n,k}\leq Lb^{2nH}\sum_{k=0}^{n-1}\alpha^{k}\left(b^{(k-2n)\tau}+b^{-n\tau+(k-n)h}+b^{(k-n)\tau-nh}\right).$$
By evaluating the geometric sum and using $\tau\wedge h>H$, we conclude that $S_n=O(\lambda^n)$ for some $\lambda\in(0,1)$.
\end{proof}

The following basic estimate is a consequence of the above lemmas and serves as the base case for an induction proof.

\begin{lemma}\label{ind1} There exist $\lambda\in(0,1)$ and $L>0$, depending only on $b$, $H$, and $\tau$, such that for all $n$,
\begin{align}
\sup_{(\mu_i)\in M_n}\sup_{(\nu_j)\in M_n}\sum_{i=1}^{b^n}\sum_{j=1}^{b^n}|\mu_i||\nu_j|\big|\rho^{(0,0)}_{i,j}\big|\le L\lambda^n.
\label{bbb}
\end{align}
\end{lemma}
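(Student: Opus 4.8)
The goal of \Cref{ind1} is to show that the quadratic form built from the covariances $\rho^{(0,0)}_{i,j}$ of increments of the underlying bridge $B$ on $b$-adic intervals of scale $b^{-n}$ is exponentially small in the operator sense on the $\ell^q$-unit ball. My plan is to reduce the bridge covariance $\rho^{(0,0)}_{i,j}$ to the martingale covariance $\tilde\rho^{(0,0)}_{i,j}$ of increments of $W$ using \Cref{l9}(a)--(b) with $k=0$, which already contributes a term of order $\lambda^n$, so that it suffices to bound the same quadratic form with $\tilde\rho^{(0,0)}_{i,j}=\bE_W[(W(t_i)-W(t_{i-1}))(W(t_j)-W(t_{j-1}))]$ in place of $\rho^{(0,0)}_{i,j}$.

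For the $W$-part I would split into near-diagonal and off-diagonal terms. The near-diagonal terms $|i-j|\le 2$ contribute at most $L b^{-2nH}\sum_i |\mu_i|\sum_{j:\,|i-j|\le2}|\nu_j|$, and by the H\"older bound \eqref{Mn Hoelder eq} (i.e.\ $\sum_i|\mu_i|\le b^{nH}$ for $(\mu_i)\in M_n$) this is $\le L b^{-2nH}b^{nH}b^{nH}\cdot(\text{const})=O(1)$ — which is not yet small, so one must be slightly more careful: actually $\sum_{j:\,|i-j|\le2}|\nu_j|$ should be estimated pointwise in $i$ by a bounded number of terms each $\le 1$, giving a bound $L b^{-2nH}\cdot b^{nH}\cdot C$, still only $O(b^{-nH})$ which is $O(\lambda^n)$ since $H>0$ and $b>1$. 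Wait—one needs $\lambda<1$: indeed $b^{-nH}=(b^{-H})^n$ with $b^{-H}\in(0,1)$, so this is of the required form. For the off-diagonal terms $|i-j|>2$, I would use \Cref{l5} (valid for $0<H\le 1/2$) and \Cref{cov} (for $1/2\le H<1$) to get $\sum_{j:\,|i-j|>2}|\tilde\rho^{(0,0)}_{i,j}|\le L b^{-2nH}$ uniformly in $i$; combining with $\sum_i|\mu_i|\le b^{nH}$ and $\sup_j|\nu_j|\le 1$ again yields an $O(b^{-nH})$ bound. Summing the two regimes gives \eqref{bbb} with $\lambda=b^{-H}\vee(\text{the }\lambda\text{ from \Cref{l9}})<1$.

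The main obstacle I anticipate is the bookkeeping at the boundary where the fractional-part wrapping in the definition of $\rho^{(0,0)}_{i,j}$ via $\Delta B$ and $\Delta W$ (recall \eqref{Delta def}) creates a handful of exceptional index pairs near $t=0$ and $t=1$; these have to be absorbed into the constant $L$ rather than contributing a growing number of terms, and one must check that \Cref{l5} and \Cref{cov}, which are stated for genuine (non-wrapped) increments of $W$ on $[0,1]$, apply after using \eqref{Delta B Delta W relation} to pass from $B$ to $W$. A second, more mundane point is confirming that the H\"older reduction in \Cref{l9}(b) with $k=0$ really does produce a constant $\lambda<1$ here; this is exactly the $S_n=O(\lambda^n)$ conclusion of \Cref{l9}(b) restricted to the single summand $k=0$, so it is immediate. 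Once these are in hand the estimate \eqref{bbb} follows by collecting the $O(b^{-nH})$ and $O(\lambda^n)$ contributions and renaming $\lambda$.
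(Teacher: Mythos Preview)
Your approach works and is actually simpler than the paper's. After reducing $\rho^{(0,0)}_{i,j}$ to $\tilde\rho^{(0,0)}_{i,j}$ via \Cref{l9}(a) with $k=0$ (exactly as the paper does in its first paragraph), you control both near- and off-diagonal parts with the single device $\sup_j|\nu_j|\le1$, $\sum_i|\mu_i|\le b^{nH}$ from \eqref{Mn Hoelder eq}, and a uniform row-sum bound $\max_i\sum_j|\tilde\rho^{(0,0)}_{i,j}|$. The paper instead splits into $H\ge1/2$ versus $H<1/2$: for large $H$ it bounds the near-diagonal crudely by $|\mu_i|,|\nu_j|\le1$ and handles the off-diagonal via a double application of H\"older's inequality leading to $(\sum_{i,j}|\tilde\rho^{(0,0)}_{i,j}|^p)^{1/p}$ and then \Cref{cov}; for small $H$ it uses the inclusion $M_n\subset B_1$ and a Schur-test operator-norm bound via \Cref{l5}. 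Your route avoids the case distinction at the cost of weaker exponents, which is harmless since only \emph{some} $\lambda<1$ is required.

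One genuine correction is needed: for $H>1/2$ your claimed row-sum bound $\sum_{j:|i-j|>2}|\tilde\rho^{(0,0)}_{i,j}|\le Lb^{-2nH}$ is false. \Cref{cov} gives $|\tilde\rho^{(0,0)}_{i,j}|\le Lb^{-2n}(|i-j|b^{-n})^{2H-2}$, and summing over $|i-j|>2$ yields $Lb^{-2nH}\sum_{\ell\ge2}\ell^{2H-2}$, which diverges for $H>1/2$. The correct bound is $O(b^{-n})$: either truncate the sum at $b^n$ to get $\sum_{\ell=2}^{b^n}\ell^{2H-2}\asymp b^{n(2H-1)}$, or simply telescope the nonnegative covariances to $\bE_W[(W(t_i)-W(t_{i-1}))W(1)]\asymp b^{-n}$. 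With this fix your off-diagonal estimate for $H>1/2$ becomes $b^{nH}\cdot b^{-n}=O(b^{n(H-1)})$, still exponentially small since $H<1$, so the conclusion stands. Finally, your concern about wrapping is moot here: at level $m=k=0$ the arguments $t_{i-1},t_i$ already lie in $[0,1]$ and no fractional parts intervene in $\rho^{(0,0)}_{i,j}$ or $\tilde\rho^{(0,0)}_{i,j}$.
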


\begin{proof}By considering the case $k=0$ in \Cref{l9} (a) and using the triangle inequality, it suffices to prove \Cref{bbb} for $\tilde \rho^{(0,0)}_{i,j}$ in place of $\rho^{(0,0)}_{i,j}$. 
Indeed, from \eqref{l9 eq1} and \eqref{Mn Hoelder eq},
\begin{align*}
\sup_{(\mu_i)\in M_n}\sup_{(\nu_j)\in M_n}\sum_{i=1}^{b^n}\sum_{j=1}^{b^n}|\mu_i||\nu_j|\big|\rho^{(0,0)}_{i,j}-\tilde \rho^{(0,0)}_{i,j}\big|\le L b^{2nH}(b^{-2n\tau}+b^{-n(\tau+H)})=Lb^{-n(\tau-H)}.
\end{align*}
Note also that $\tilde \rho^{(0,0)}_{i,j}$ only involves standard increments of $W$, i.e., 
$$\tilde \rho^{(0,0)}_{i,j}=\bE_W[(W(t_i)-W(t_{i-1}))(W(t_j)-W(t_{j-1}))].
$$

Consider first the case $H{\ge}1/2$. We bound each factor $\mu_i,\nu_j$ by $\pm 1$ and use Cauchy--Schwarz to obtain bounds on the near-diagonal terms:
\begin{align*}
\sup_{(\mu_i)\in M_n}\sup_{(\nu_j)\in M_n}\sum_{\substack{1\leq i,j\leq b^n\\ |i-j|\leq 2}}|\mu_i||\nu_j|\big|\rho^{(0,0)}_{i,j}\big|&\leq \sum_{\substack{1\leq i,j\leq b^n\\ |i-j|\leq 2}}|\tilde \rho^{(0,0)}_{i,j}|\\
&\leq Lb^{n}\Big(\bE_W[(W(t_i)-W(t_{i-1}))^2]\cdot\bE_W[(W(t_j)-W(t_{j-1}))^2]\Big)^{1/2}\\
&\leq Lb^{n}b^{-2nH}.
\end{align*}
By repeated use of H\"{o}lder's inequality, we estimate the remaining terms as follows,
\begin{align*}
S_n&:=\sup_{(\mu_i)\in M_n}\sup_{(\nu_j)\in M_n}\sum_{i=1}^{b^n}|\mu_i|\sum_{\substack{1\leq j\leq b^n\\ |i-j|> 2}}|\nu_j||\tilde \rho^{(0,0)}_{i,j}|\\
&\leq \sup_{(\mu_i)\in M_n}\sup_{(\nu_j)\in M_n}\bigg(\sum_{i=1}^{b^n}|\mu_i|^q\bigg)^{1/q}\bigg(\sum_{i=1}^{b^n}\bigg(\sum_{\substack{1\leq j\leq b^n\\ |i-j|> 2}}|\nu_j||\tilde \rho^{(0,0)}_{i,j}|\bigg)^p\bigg)^{1/p}\\
&\leq \sup_{(\mu_i)\in M_n}\sup_{(\nu_j)\in M_n}\bigg(\sum_{i=1}^{b^n}\bigg(\bigg(\sum_{\substack{1\leq j\leq b^n\\ |i-j|> 2}}|\nu_j|^q\bigg)^{1/q}\bigg(\sum_{\substack{1\leq j\leq b^n\\ |i-j|> 2}}|\tilde \rho^{(0,0)}_{i,j}|^p\bigg)^{1/p}\bigg)^p\bigg)^{1/p}\\
&\leq \bigg(\sum_{i=1}^{b^n}\sum_{\substack{1\leq j\leq b^n\\ |i-j|> 2}}|\tilde \rho^{(0,0)}_{i,j}|^p\bigg)^{1/p}.
\end{align*}
For each fixed $1\leq i\leq b^n$, we apply \Cref{cov} with $m=k=n$ to obtain
$$
\sum_{\substack{1\leq j\leq b^n\\ |i-j|> 2}}|\tilde \rho^{(0,0)}_{i,j}|^p\leq L\sum_{\ell=2}^{b^n}\bigg(b^{-2n}(b^{-n}\ell)^{(2H-2)}\bigg)^p.
$$
Summation over $i$ and recalling that $p=1/H$ yields
\begin{align*}
S_n\leq L\bigg(\sum_{i=1}^{b^n}\sum_{\ell=2}^{b^n}\bigg(b^{-2n}(b^{-n}\ell)^{(2H-2)}\bigg)^p\bigg)^{1/p}\leq L\bigg(b^{-n}\sum_{\ell=2}^{b^n}\ell^{2-2p}\bigg)^H\leq Lb^{{2n(H-1)}}.\end{align*}

Now we consider the case $0<H{<} 1/2$. Then $1<q\leq 2$ and $M_n$ is contained in the unit ball, $B_1$, of $\bR^{b^n}$. Using that $|\tilde \rho^{(0,0)}_{i,j}|\le b^{-2nH}$ by the Cauchy--Schwarz inequality, the near-diagonal terms can be bounded as follows,
\begin{align*}
\sup_{(\mu_i)\in M_n}\sup_{(\nu_j)\in M_n}\sum_{\substack{1\leq i,j\leq b^n\\ |i-j|\leq 2}}|\mu_i||\nu_j|\big|\tilde\rho^{(0,0)}_{i,j}\big|
&\leq\sup_{(\mu_i)\in M_n}\sup_{(\nu_j)\in M_n}\sum_{i=1}^{b^n}\sum_{\substack{1\leq j\leq b^n\\ |i-j|\leq 2}}(|\mu_i|^2+|\nu_j|^2)|\tilde \rho^{(0,0)}_{i,j}|\\
&\leq b^{-2nH}\sup_{\substack{(\mu_i)\in B_1\\ (\nu_j)\in B_1}}\sum_{i=1}^{b^n}\sum_{\substack{1\leq j\leq b^n\\ |i-j|\leq 2}}(|\mu_i|^2+|\nu_j|^2)\\
&\leq Lb^{-2nH}.
\end{align*}
To deal with the off-diagonal terms, we replace the  covariance matrix $\{|\tilde \rho^{(0,0)}_{i,j}|\}_{1\leq i,j\leq b^n}$ with
$$R:=\{|\tilde \rho^{(0,0)}_{i,j}|\bone_{|i-j|>2}\}_{1\leq i,j\leq b^n}.$$
By \Cref{l5} and Lemma 10.2.1 in~\cite{MarcusRosen}, we conclude that the operator norm of $R$ satisfies  $\Vert R\Vert\le Lb^{-2nH}$. Thus
\begin{align*}
\sup_{(\mu_i)\in M_n}\sup_{(\nu_j)\in M_n}\sum_{\substack{1\leq i,j\leq b^n\\ |i-j|> 2}}|\mu_i||\nu_j|\big|\tilde\rho^{(0,0)}_{i,j}\big|&\leq\sup_{\substack{(\mu_i)\in B_1\\ (\nu_j)\in B_1}}\sum_{i=1}^{b^n}\sum_{j=1}^{b^n}|\mu_i||\nu_j|\big|\tilde\rho^{(0,0)}_{i,j}\big|\bone_{|i-j|>2}=\Vert R\Vert\le Lb^{-2nH},
\end{align*}
as required.
\end{proof}

Now we prepare for the induction step. We start with the following lemma, which gives a key reason for why it is convenient to work with $\tilde\rho^{(m,k)}_{i,j}$ instead of $\rho^{(m,k)}_{i,j}$.

\begin{lemma}
\label{w}
Let $H\in(0,1),\ n\in\bN$, and $0\leq u\leq v\leq 1$ be fixed where $ub^n,vb^n\in\bZ$, then
\begin{align*}
&\hspace{0.5cm}\sum_{j=1}^{b^{n+1}}|\mu_j||\bE_W[(W(jb^{-n-1})-W((j-1)b^{-n-1}))(W(v)-W(u))]|\\
&\leq \sum_{i=1}^{b^n}\bigg(\max_{(i-1)b< j\leq ib}|\mu_{j}|\bigg)|\bE_W[(W(ib^{-n})-W((i-1)b^{-n}))(W(v)-W(u))]|.
\end{align*}
\end{lemma}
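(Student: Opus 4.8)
The plan is to reduce the inequality to a statement that the covariances $\bE_W[(W(jb^{-n-1})-W((j-1)b^{-n-1}))(W(v)-W(u))]$ do not change sign as $j$ ranges over one block of $b$ consecutive fine intervals, and then to verify that sign statement by a direct computation with the covariance function of $W$.

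First I would group the left-hand sum according to the $b$ fine subintervals contained in each coarse interval. Writing $\Delta_j:=W(jb^{-n-1})-W((j-1)b^{-n-1})$, $\Delta_i^\ast:=W(ib^{-n})-W((i-1)b^{-n})=\sum_{(i-1)b<j\le ib}\Delta_j$, and $\Delta V:=W(v)-W(u)$, the left-hand side equals $\sum_{i=1}^{b^n}\sum_{(i-1)b<j\le ib}|\mu_j|\,\big|\bE_W[\Delta_j\Delta V]\big|$. Bounding $|\mu_j|$ by $\max_{(i-1)b<j\le ib}|\mu_j|$ inside each block, it suffices to show that for every $i$ one has $\sum_{(i-1)b<j\le ib}\big|\bE_W[\Delta_j\Delta V]\big|=\big|\bE_W[\Delta_i^\ast\Delta V]\big|$; and this holds as soon as the numbers $\bE_W[\Delta_j\Delta V]$, $(i-1)b<j\le ib$, all have the same sign, since then the triangle inequality is an equality and the $\Delta_j$ telescope into $\Delta_i^\ast$.

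So the heart of the matter is the sign claim. I would introduce $h(y):=\bE_W[W(y)\,\Delta V]=R(y,v)-R(y,u)$, where $R(s,t)=\tfrac12\big(s^{2H}+t^{2H}-|s-t|^{2H}\big)$ is the covariance of $W$; then $h(y)=\tfrac12(v^{2H}-u^{2H})+\tfrac12\big(|y-u|^{2H}-|y-v|^{2H}\big)$ and $\bE_W[\Delta_j\Delta V]=h(jb^{-n-1})-h((j-1)b^{-n-1})$. Differentiating gives $h'(y)=H\big(\operatorname{sgn}(y-u)|y-u|^{2H-1}-\operatorname{sgn}(y-v)|y-v|^{2H-1}\big)$, and a short case analysis shows $h'>0$ on $(u,v)$, while on $(0,u)$ and on $(v,1)$ the sign of $h'$ is constant and equal to that of $2H-1$ (nonnegative for $H\ge1/2$, nonpositive for $H\le 1/2$). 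Hence $h$ is monotone on each of the three intervals $[0,u]$, $[u,v]$, $[v,1]$. Because $ub^n$ and $vb^n$ are integers, $u$ and $v$ lie on the level-$n$ grid, so each coarse interval $[(i-1)b^{-n},ib^{-n}]$, hence each of its fine subintervals, is contained in exactly one of these three intervals; there $y\mapsto h(y)$ is monotone, so its consecutive increments $h(jb^{-n-1})-h((j-1)b^{-n-1})$ share a common sign. This proves the claim and, by the reduction above, the lemma.

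The only routine-but-careful part is the case analysis for $\operatorname{sgn} h'$ together with the edge cases ($u=0$ or $v=1$, a block sharing only an endpoint with $[u,v]$, or $u=v$ where $\Delta V=0$ and everything is trivial), which I would dispatch by noting that monotonicity of $h$ on an open interval extends to its closure by continuity. I do not anticipate a genuine obstacle; the one point that must be respected is that the partition $[0,1]=[0,u]\cup[u,v]\cup[v,1]$ is compatible with the level-$n$ grid, which is exactly what the hypothesis $ub^n,vb^n\in\bZ$ provides.
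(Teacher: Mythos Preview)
Your proposal is correct and follows essentially the same approach as the paper: both reduce the inequality to the fact that the covariances $\bE_W[\Delta_j\Delta V]$ have constant sign within each coarse block, then remove absolute values and telescope. The only cosmetic difference is that the paper invokes the well-known sign behavior of fractional Brownian increment covariances (positive for nested intervals, sign of $2H-1$ for disjoint ones) directly, whereas you derive it by computing $h'$; the grid hypothesis $ub^n,vb^n\in\bZ$ is used in exactly the same way in both arguments.
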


\begin{proof}
For each fixed $i$, the intervals $((i-1)b^{-n},ib^{-n})$ and $(u,v)$ either have containment relationship or are disjoint. Hence, for subintervals $[s,t]\se [(i-1)b^{-n},ib^{-n}]$, the sign of the covariance of  $W(t)-W(s)$ and $W(v)-W(u)$ is independent of the choice of $s$ and $t$. Indeed, it is well known that $W(t)-W(s)$ and $W(v)-W(u)$ are always positively correlated if the intervals $((i-1)b^{-n},ib^{-n})$ and $(u,v)$ have containment relationship; if they are disjoint, then $W(t)-W(s)$ and $W(v)-W(u)$ are positively correlated if and only if $H>1/2$, negatively correlated if and only if $H<1/2$, and independent if $H=1/2$.  Thus the claim follows by removing the absolute values and using a telescopic sum.
\end{proof}

\begin{lemma}
Let $L>0$ and $\lambda\in(0,1)$ be constants and $(a_n)_{n\in\bN}$ be a sequence of positive real numbers satisfying $a_0\leq L$ and 
$$
a_{n+1}\leq L\lambda^n+\lambda a_n,\qquad n\in\bN_0.
$$
Then there are constants $L_1>0$ and $\lambda_1\in(0,1)$ such that $a_n\leq L_1\lambda_1^n$.\label{p}
\end{lemma}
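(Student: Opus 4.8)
This is a Gr\"onwall-type bookkeeping lemma, and I would prove it by guessing the form of the bound and verifying it by induction; an alternative is to unroll the recursion explicitly (which gives $a_n\le \lambda^n a_0+Ln\lambda^{n-1}$ for $n\ge1$) and then absorb the polynomial factor $n$ into a slightly enlarged geometric ratio. I will describe the inductive route, which is shortest.

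\textbf{Setup and base case.} First I would fix any $\lambda_1\in(\lambda,1)$ and put $L_1:=\max\{L,\,L/(\lambda_1-\lambda)\}$. The claim to prove by induction on $n$ is that $a_n\le L_1\lambda_1^n$ for all $n\in\bN_0$. For $n=0$ this is immediate, since $a_0\le L\le L_1$.

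\textbf{Inductive step.} Assuming $a_n\le L_1\lambda_1^n$, the hypothesis $a_{n+1}\le L\lambda^n+\lambda a_n$ gives $a_{n+1}\le L\lambda^n+\lambda L_1\lambda_1^n$, hence
$$\frac{a_{n+1}}{\lambda_1^{n+1}}\le\frac{L}{\lambda_1}\Big(\frac{\lambda}{\lambda_1}\Big)^n+\frac{\lambda}{\lambda_1}L_1\le\frac{L}{\lambda_1}+\frac{\lambda}{\lambda_1}L_1,$$
where in the last step I used $\lambda/\lambda_1<1$ so that $(\lambda/\lambda_1)^n\le1$. It therefore suffices to verify $\frac{L}{\lambda_1}+\frac{\lambda}{\lambda_1}L_1\le L_1$, equivalently $L\le(\lambda_1-\lambda)L_1$, which holds by the choice of $L_1$. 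This closes the induction and yields the asserted bound with the stated constants.

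\textbf{Main point.} There is no real obstacle here; the only thing to be careful about is that one must pass from the given rate $\lambda$ to a strictly larger rate $\lambda_1<1$. Iterating the recursion naively produces a term $\sim n\lambda^{n-1}$, and the factor $n$ cannot be absorbed at rate exactly $\lambda$, but it is harmless at any rate $\lambda_1>\lambda$; this is exactly what the choice of $\lambda_1$ and the slightly larger constant $L_1$ take care of. The lemma is then applied to the sequence controlling $\sigma_n^2$ to conclude that $\sigma_n=O(\lambda^n)$ for some $\lambda\in(0,1)$.
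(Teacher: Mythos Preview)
Your proof is correct. The paper's proof is the unrolling variant you mention as an alternative: it sets $b_n:=a_n/\lambda^n$, observes $b_{n+1}\le b_n+L/\lambda$, hence $a_n\le L(1+n/\lambda)\lambda^n$, and then absorbs the factor $n$ into a slightly larger rate $\lambda_1\in(\lambda,1)$; your direct induction with a pre-chosen $\lambda_1$ reaches the same conclusion just as efficiently.
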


\begin{proof}
Dividing both sides by $\lambda^{n+1}$ we see that $b_n:=a_n/\lambda^n$ satisfies $b_{n+1}\leq b_n+L/\lambda$. This gives $b_n\leq L+nL/\lambda$ so that $a_n\leq Ln\lambda^n\leq L_1\lambda_1^n$. 
\end{proof}

The following is an induction argument using \Cref{ind1} as the base case. It states in particular that the contribution from the terms with $m=0$ in \eqref{s2} is of the order $O(\lambda^n)$.

\begin{lemma}\label{l7}
Let
\begin{equation}\label{F def eq}
F_{n,k}:=\sup_{(\nu_i)\in M_n}\sup_{(\mu_i)\in M_n}\sum_{i=1}^{b^n}\sum_{j=1}^{b^n}|\mu_i\nu_j||\rho^{(0,k)}_{i,j}|\quad\text{and}\quad  F_n:=\sum_{k=0}^{n-1}\alpha^k F_{n,k}.
\end{equation}
Then there exists $\lambda\in(0,1)$ such that $F_n=O(\lambda^n)$ 
for all $H\in(0,1)$.
\end{lemma}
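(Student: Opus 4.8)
The plan is to reduce the claim to a self-similar recursion in $n$ and close it with \Cref{p}. First I would pass from $\rho^{(0,k)}_{i,j}$ to the pure Wiener increments $\tilde\rho^{(0,k)}_{i,j}$: with
\[
\tilde F_{n,k}:=\sup_{(\mu_i)\in M_n}\sup_{(\nu_j)\in M_n}\sum_{i=1}^{b^n}\sum_{j=1}^{b^n}|\mu_i\nu_j|\,\big|\tilde\rho^{(0,k)}_{i,j}\big|,\qquad \tilde F_n:=\sum_{k=0}^{n-1}\alpha^k\tilde F_{n,k},
\]
the triangle inequality gives $F_{n,k}\le\tilde F_{n,k}+\sup_{(\mu_i)\in M_n}\sup_{(\nu_j)\in M_n}\sum_{i,j}|\mu_i\nu_j|\,|\rho^{(0,k)}_{i,j}-\tilde\rho^{(0,k)}_{i,j}|$, and the $\alpha^k$-weighted sum over $k$ of the last term is $O(\lambda^n)$ by \Cref{l9}(b); so it suffices to prove $\tilde F_n=O(\lambda^n)$.

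The core is the self-similarity bound $\tilde F_{n+1,k}\le b^{1/p}\,\tilde F_{n,k-1}$ for $1\le k\le n$. To get it, note that at level $n+1$ the value $\tilde\rho^{(0,k)}_{i,j}$ depends on $j$ only through the increment $\Delta W(b^kt^{(n+1)}_{j-1},b^kt^{(n+1)}_j)$; since $k\ge1$, its endpoints lie in $b^{-n}\bZ$ and convention \eqref{Delta def} makes it an honest forward increment $W(v_j)-W(u_j)$ across one of the $b^{\,n+1-k}$ elementary intervals of length $b^{-(n+1-k)}$, each of which is the $j$-interval for exactly $b^k$ indices $j$. For a fixed such interval, \Cref{w} coarsens the inner $i$-sum from level $n+1$ to level $n$, replacing $(|\mu_i|)\in M_{n+1}$ by the block maxima $\hat\mu_{i'}:=\max_{(i'-1)b<i\le i'b}|\mu_i|$, which still lie in $M_n$. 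Grouping the $j$-sum over each elementary interval $\ell$ and letting $\bar\nu_\ell$ be the sum of $|\nu_j|$ over the $b^k$ indices $j$ hitting $\ell$, the power-mean inequality gives $\|\bar\nu\|_q\le b^{k/p}$. The resulting expression $\sum_{i',\ell}\hat\mu_{i'}\bar\nu_\ell\,\big|\bE_W[\Delta W(t^{(n)}_{i'-1},t^{(n)}_{i'})(W(v_\ell)-W(u_\ell))]\big|$ is realized as a level-$n$, parameter-$(k-1)$ configuration by spreading each $\bar\nu_\ell$ uniformly over the $b^{k-1}$ level-$n$ indices hitting $\ell$; the spread vector has $\ell^q$-norm at most $b^{1/p}$, which after normalization into $M_n$ produces exactly the surplus factor $b^{1/p}$ in front of $\tilde F_{n,k-1}$.

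Granting this, separating $k=0$ gives
\[
\tilde F_{n+1}=\tilde F_{n+1,0}+\sum_{k=1}^{n}\alpha^k\tilde F_{n+1,k}\le \tilde F_{n+1,0}+\alpha b^{1/p}\sum_{k=0}^{n-1}\alpha^{k}\tilde F_{n,k}=\tilde F_{n+1,0}+\alpha b^{1/p}\,\tilde F_n ,
\]
where $\tilde F_{n+1,0}=O(\lambda^n)$ by \Cref{ind1} together with the $\tilde\rho$-correction from \Cref{l9}(b). As this section works under $K>H$, we have $1/p=K\wedge H=H$ and $\alpha b^{1/p}=\alpha b^{H}<1$ because $H<K\le-\log_b\alpha$. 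Choosing $\lambda\in(0,1)$ above $\alpha b^{H}$ and above the decay rates furnished by \Cref{ind1} and \Cref{l9}, we obtain $\tilde F_{n+1}\le L\lambda^{n}+\lambda\,\tilde F_n$, so \Cref{p} yields $\tilde F_n=O(\lambda_1^{n})$, and the first-paragraph reduction upgrades this to $F_n=O(\lambda_1^{n})$.

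I expect the self-similarity bound to be the main obstacle, for two reasons: one must track the fractional part in the $j$-increment carefully enough to see that for $k\ge1$ it is a forward increment with endpoints in $b^{-n}\bZ$ — exactly the hypothesis of \Cref{w} — and one must carry out the $\ell^q$-accounting so that aggregating $b^k$ weights into one (cost $b^{k/p}$) and then redistributing over $b^{k-1}$ indices (cost $b^{-(k-1)/p}$) leaves precisely the single factor $b^{1/p}$, which is what makes the geometric series over $k$ collapse to $\alpha b^{1/p}\tilde F_n$ and preserves a genuine contraction.
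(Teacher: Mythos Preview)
Your proposal is correct and follows essentially the same strategy as the paper: reduce to $\tilde F_n$ via \Cref{l9}(b), establish the self-similarity bound $\tilde F_{n+1,k+1}\le b^{H}\tilde F_{n,k}$ using \Cref{w}, and close the recursion $\tilde F_{n+1}\le \tilde F_{n+1,0}+\alpha b^H\tilde F_n$ with \Cref{ind1} and \Cref{p}. The only difference is in how the recursion step is implemented --- the paper splits the $j$-indices at level $n+1$ into $b$ consecutive blocks of size $b^n$ (via the periodicity $\fp{(j_0+vb^n)b^{k-n}}=\fp{j_0b^{k-n}}$) and then uses a homogeneity/simplex argument, whereas you group $j$ by the fractional interval hit, aggregate, and redistribute at level $n$; both routes produce the same factor $b^{1/p}=b^H$.
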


\begin{proof} Let us define $\wt F_{n,k}$ and $\wt F_n$ as in \eqref{F def eq}, but with $\rho^{(0,k)}_{i,j}$ replaced with $\tilde \rho^{(0,k)}_{i,j}$.
By \Cref{l9} (b) and the triangle inequality, the assertion will follow if we can show that $\wt F_n=O(\lambda^n)$. 
To this end, for $s\ge0$, we write $M_{n,s}:=\{(s^{1/q}\mu_i):(\mu_i)\in M_n\}$. Let us define for $r,s\geq 0$ the function
\begin{equation*}
\wt F_{n,k}(r,s):=\sup_{(\nu_i)\in M_{n,r}}\sup_{(\mu_i)\in M_{n,s}}\sum_{i=1}^{b^n}\sum_{j=1}^{b^n}|\mu_i\nu_j||\tilde\rho^{(0,k)}_{i,j}|.
\end{equation*}
Obviously we have $\wt F_{n,k}(1,1)=\wt F_{n,k}$ as well as the homogeneity property
\begin{align}
\wt F_{n,k}(r,s)=(rs)^{1/q}\wt F_{n,k}(1,1).\label{ho}
\end{align}
 For the induction step we will bound $\wt F_{n+1,k+1}$ from above by $\wt F_{n,k}$:
\begin{align*}
\lefteqn{\wt F_{n+1,k+1}}\\
&=\sup_{(\mu_i)\in M_{n+1}}\sup_{(\nu_j)\in M_{n+1}}\sum_{i=1}^{b^{n+1}}\sum_{j=1}^{b^{n+1}}|\mu_i\nu_j|\big|\bE[(W(t^{(n+1)}_i)-W(t^{(n+1)}_{i-1}))\Delta W(b^{k+1}t^{(n+1)}_{j-1},b^{k+1}t^{(n+1)}_j)]\big|\\
&=\sup_{(\mu_i)\in M_{n+1}}\sup_{(\nu_j)\in M_{n+1}}\sum_{i=1}^{b^{n+1}}\sum_{v=0}^{b-1}\sum_{j=1}^{b^{n}}|\mu_i||\nu_{j+vb^n}|\big|\bE[(W(t^{(n+1)}_i)-W(t^{(n+1)}_{i-1}))\Delta W(b^kt^{(n)}_{j-1},b^kt^{(n)}_j)]\big|\\
&=\sup_{(\mu_i)\in M_{n+1}}\sup_{(\nu_j)\in M_{n+1}}\sum_{v=0}^{b-1}\sum_{j=1}^{b^{n}}|\nu_{j+vb^n}|\sum_{i=1}^{b^{n+1}}|\mu_i|\big|\bE[(W(t^{(n+1)}_i)-W(t^{(n+1)}_{i-1}))\Delta W(b^kt^{(n)}_{j-1},b^kt^{(n)}_j)]\big|\\
&\leq\sup_{(\mu_i)\in M_{n+1}}\sup_{(\nu_j)\in M_{n+1}}\sum_{v=0}^{b-1}\sum_{j=1}^{b^{n}}|\nu_{j+vb^n}|\sum_{i=1}^{b^{n}}\max_{(i-1)b<\ell\le ib}|\mu_{\ell}|\big|\bE[(W(t^{(n)}_i)-W(t^{(n)}_{i-1}))\Delta W(b^kt^{(n)}_{j-1},b^kt^{(n)}_j)]\big|\\
&=\sup_{(\mu_i)\in M_{n+1}}\sup_{(\nu_j)\in M_{n+1}}\sum_{v=0}^{b-1}\sum_{j=1}^{b^{n}}|\nu_{j+vb^n}|\sum_{i=1}^{b^{n}}|\tilde\mu_{i}|\big|\bE[(W(t^{(n)}_{i})-W(t^{(n)}_{i-1}))\Delta W(b^kt^{(n)}_{j-1},b^kt^{(n)}_j)]\big|,
\end{align*}
where the second step follows from the simple relation $\fp{(j+vb^n)b^{(k+1)-(n+1)}}=\fp{jb^{k-n}}$, the fourth step follows from \Cref{w}, and where we define $\tilde\mu_{i}:=\max_{(i-1)b<\ell\le ib}|\mu_{\ell}|$.
Then $\sum_{i=1}^{b^{n+1}}|\mu_{i}|^q\leq 1$ implies $\sum_{i=1}^{b^n}|\tilde\mu_{i}|^q\leq 1$ so that $(\tilde\mu_i)\in M_n$. For ${\bf s}:=(s_0,\dots,s_{b-1})$ in the $b$-dimensional simplex $\bS_b=\{(s_0,\dots,s_{b-1}):s_i\ge0,\ \sum_is_i\le1\}$, we define
\begin{align}
I({\bf s}):=\bigg\{(\nu_j)\in M_{n+1}: \sum_{j=1}^{b^n}|\nu_{j+vb^n}|^q=s_v\text{ for  $v=0,\dots,b-1$}\bigg\}.\label{isit}
\end{align}
Then
\begin{align*}
\lefteqn{\wt F_{n+1,k+1}}\\
&\leq \sup_{{\bf s}\in\bS_b}\sum_{v=0}^{b-1}\sup_{(\nu_j)\in I({\bf s})}\sup_{(\mu_i)\in M_{n+1}}\sum_{j=1}^{b^{n}}|\nu_{j+vb^n}|\sum_{i=1}^{b^{n}}|\tilde\mu_i|\big|\bE[(W(t^{(n)}_{i})-W(t^{(n)}_{i-1}))\Delta W(b^kt^{(n)}_{j-1},b^kt^{(n)}_j)]\big|\\
&\leq \sup_{{\bf s}\in\bS_b}\sum_{v=0}^{b-1}\sup_{(\nu_j)\in I({\bf s})}\sup_{(\hat\mu_i)\in M_n}\sum_{j=1}^{b^{n}}|\nu_{j+vb^n}|\sum_{i=1}^{b^{n}}|\hat\mu_i|\big|\bE[(W(t^{(n)}_{i})-W(t^{(n)}_{i-1}))\Delta W(b^kt^{(n)}_{j-1},b^kt^{(n)}_j)]\big|\\
&= \sup_{{\bf s}\in\bS_b}\sum_{v=0}^{b-1}\wt F_{n,k}(1,s_v)= \wt F_{n,k}(1,1)\sup_{{\bf s}\in\bS_b}\sum_{v=0}^{b-1}s_v^{1/q}=b^H\wt F_{n,k},
\end{align*}
where the last line follows from \eqref{ho} and H\"{o}lder's inequality:
\begin{align}
\sum_{v=0}^{b-1}s_v^{1/q}\leq \bigg(\sum_{v=0}^{b-1}s_v\bigg)^{1/q}b^{1/p}\leq b^H.\label{hol}
\end{align}Hence,
\begin{align}
\wt F_{n+1}=\sum_{k=0}^n\alpha^k \wt F_{n+1,k}(1,1)&\leq \wt F_{n+1,0}(1,1)+\alpha\sum_{k=0}^{n-1}\alpha^k \wt F_{n+1,k+1}(1,1)\nonumber\\
&\leq \wt F_{n+1,0}(1,1)+\alpha b^H \sum_{k=0}^{n-1}\alpha^k \wt F_{n,k}(1,1)= \wt F_{n+1,0}(1,1)+\alpha b^H \wt F_n.\label{Fn}
\end{align}
By \Cref{ind1}, $\wt F_{n+1,0}(1,1)=O(\lambda^n)$, thus we conclude by \Cref{p} and $\alpha b^H<1$ that $\wt F_{n+1}=O(\lambda^n)$, completing the proof.
\end{proof}

Now we turn to the second induction step. This time we use \Cref{l7} as the base case:

\begin{proposition}Let $H\in(0,1)$, then $\sigma_n^2=O(\lambda^n)$ for some $\lambda\in(0,1)$.
\label{p4}
\end{proposition}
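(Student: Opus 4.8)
The plan is to run the recursion used for the $m=0$ terms in \Cref{l7}, but now simultaneously in \emph{both} indices $m$ and $k$. Moving the supremum in \eqref{s2} inside the sum over $m$ and $k$ and dropping signs gives
\[
\sigma_n^2\le\sum_{m,k=0}^{n-1}\alpha^{m+k}G^\rho_{n,m,k},\qquad G^\rho_{n,m,k}:=\sup_{(\mu_i),(\nu_j)\in M_n}\sum_{i=1}^{b^n}\sum_{j=1}^{b^n}|\mu_i||\nu_j|\big|\rho^{(m,k)}_{i,j}\big|,
\]
so, writing $\widetilde G_{n,m,k}$ for the same quantity with $\rho^{(m,k)}_{i,j}$ replaced by $\tilde\rho^{(m,k)}_{i,j}$ and $\widetilde G_n:=\sum_{m,k=0}^{n-1}\alpha^{m+k}\widetilde G_{n,m,k}$, it suffices to prove $\widetilde G_n=O(\lambda^n)$ and to control the error in passing from $\rho$ to $\tilde\rho$.

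For that error I would first extend \Cref{l9}(a) from $m=0$ to arbitrary $m$: expanding $\Delta B=\Delta W-\Delta\kappa\,W(1)$ as in the proof of \Cref{l9}(a) and using the $\tau$-H\"older continuity of $\kappa$ together with the $h$-H\"older continuity of the covariance \eqref{cov!} in each argument, where $h=(2H)\wedge1$, gives
\[
\big|\rho^{(m,k)}_{i,j}-\tilde\rho^{(m,k)}_{i,j}\big|\le L\big(b^{(m-n)\tau+(k-n)\tau}+b^{(m-n)\tau+(k-n)h}+b^{(m-n)h+(k-n)\tau}\big).
\]
Multiplying by $\alpha^{m+k}$, using $\sum_i|\mu_i|\le b^{nH}$ from \eqref{Mn Hoelder eq} on each coordinate, and summing the geometric series in $m$ and $k$ shows that each of the three contributions is $O(\lambda^n)$: this is a short case distinction according to whether $\alpha$ lies above or below $b^{-\tau}$ and $b^{-h}$, using in each case that $\tau>H$, $h>H$, and $\alpha b^H<1$ (the last since we are in the regime $K>H$). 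Hence $\sigma_n^2\le\widetilde G_n+O(\lambda^n)$.

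The heart of the argument is the two-sided recursion: for $m,k\ge1$,
\[
\widetilde G_{n+1,m,k}\le b^{2H}\,\widetilde G_{n,m-1,k-1}.
\]
This is proved just like the step $\wt F_{n+1,k+1}\le b^H\wt F_{n,k}$ in the proof of \Cref{l7}, except that now \emph{both} coordinates are reduced by reindexing rather than by the coarsening \Cref{w}. Indeed, since $m\ge1$, the array $i\mapsto\Delta W\big(b^mt^{(n+1)}_{i-1},b^mt^{(n+1)}_i\big)$ is periodic in $i$ with period $b^{\,n+1-m}$, which divides $b^n$; writing $i=i'+ub^n$, $j=j'+vb^n$ with $u,v\in\{0,\dots,b-1\}$ and noting that $ub^{m-1},vb^{k-1}\in\bN_0$, one checks that $\Delta W\big(b^mt^{(n+1)}_{i-1},b^mt^{(n+1)}_i\big)=\Delta W\big(b^{m-1}t^{(n)}_{i'-1},b^{m-1}t^{(n)}_{i'}\big)$, and likewise in $j$, so that $\tilde\rho^{(m,k)}_{i'+ub^n,\,j'+vb^n}$ at level $n+1$ equals $\tilde\rho^{(m-1,k-1)}_{i',j'}$ at level $n$; the factor $b^{2H}=b^{2/p}$ then comes from applying the H\"older estimate \eqref{hol} once on each side, exactly as in \Cref{l7}. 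Splitting $\{0,\dots,n\}^2=\{m=0\}\cup\{k=0\}\cup\{m\ge1,\,k\ge1\}$ in $\widetilde G_{n+1}=\sum_{m,k=0}^{n}\alpha^{m+k}\widetilde G_{n+1,m,k}$, the terms with $m=0$ add up to the quantity $\wt F_{n+1}$ from the proof of \Cref{l7}, the terms with $k=0$ add up to the same thing by the symmetry $\tilde\rho^{(m,0)}_{i,j}=\tilde\rho^{(0,m)}_{j,i}$, and reindexing $m\mapsto m-1$, $k\mapsto k-1$ in the remaining double sum and applying the recursion produces exactly $\alpha^2b^{2H}\widetilde G_n$. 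Therefore $\widetilde G_{n+1}\le O(\lambda^n)+(\alpha b^H)^2\,\widetilde G_n$, and since $(\alpha b^H)^2<1$ when $K>H$, \Cref{p} gives $\widetilde G_n=O(\lambda_1^n)$ for some $\lambda_1\in(0,1)$; combined with the previous paragraph this yields $\sigma_n^2=O(\lambda^n)$.

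I expect the main obstacle to be the bookkeeping in the recursion step, namely verifying that the array $\tilde\rho^{(m,k)}$ at level $n+1$ collapses block by block onto $\tilde\rho^{(m-1,k-1)}$ at level $n$. This hinges on the divisibility $b^{\,n+1-m}\mid b^n$, which holds precisely because $m\ge1$ (for $m=0$ the period is $b^{n+1}$ and one genuinely needs the coarsening \Cref{w} of \Cref{l7}), and on the fact that adding nonnegative integers to the arguments of $\Delta W$ changes neither its value nor which branch of \eqref{Delta def} applies. One must also track the index ranges so that the reindexed sum is exactly $\widetilde G_n$ with no stray boundary terms; this works out cleanly here because the recursion lowers $n$, $m$, and $k$ all by one simultaneously.
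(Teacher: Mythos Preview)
Your argument is correct and follows essentially the same two-sided periodicity recursion as the paper: both reduce $\widetilde G_{n+1,m,k}$ (resp.\ the paper's $G_{n+1,m+1,k+1}$) to $b^{2H}$ times the level-$n$ quantity via the block decomposition $i=i'+ub^n$, $j=j'+vb^n$ and the H\"older bound \eqref{hol}, handle the boundary $m=0$ or $k=0$ through \Cref{l7}, and conclude via \Cref{p} using $(\alpha b^H)^2<1$.

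The one difference is that you take absolute values, pass from $\rho$ to $\tilde\rho$, and therefore need the general-$(m,k)$ extension of \Cref{l9}(a); the paper works directly with the signed bilinear form in $\rho^{(m,k)}_{i,j}$ and avoids this step entirely, because by \eqref{Delta B Delta W relation} one has $\Delta B(s,t)=B(\fp t)-B(\fp s)$, which is already invariant under integer shifts of $(s,t)$, so the identity $\rho^{(m+1,k+1)}_{i'+ub^n,\,j'+vb^n}=\rho^{(m,k)}_{i',j'}$ (levels $n+1$ and $n$) holds for $\rho$ just as it does for $\tilde\rho$. Your detour is harmless but unnecessary.
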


\begin{proof} Similarly to the proof of \Cref{l7}, we define  the function
$$
G_{n,m,k}(r,s):=\sup_{(\mu_i) \in M_{n,r}}\sup_{(\nu_j)\in M_{n,s}}\sum_{i=1}^{b^n}\sum_{j=1}^{b^n}\mu_i\nu_j\rho^{(m,k)}_{i,j}.
$$
From  \eqref{s2}, we get
\begin{align*}
\sigma_n^2\leq \sum_{m=0}^{n-1}\sum_{k=0}^{n-1}\alpha^{m+k}G_{n,m,k}(1,1)=:G_n.
\end{align*}
So it suffices to show $G_n=O(\lambda^n)$. As in the preceding proof, $G_{n,m,k}$ satisfies the following  homogeneity property,
\begin{align}
G_{n,m,k}(r,s)=(rs)^{1/q}G_{n,m,k}(1,1).\label{hom}
\end{align}
Let us also introduce the shorthand notation
$$\Delta_2B(s,t,u,v):=\Delta B(s,t)\Delta B(u,v).
$$
Then, with $\bS_b$ denoting again the $b$-dimensional standard simplex and $I({\bf s})$ as in \eqref{isit},
\begin{align*}
\lefteqn{G_{n+1,m+1,k+1}(1,1)}\\&=\sup_{(\mu_i)\in M_{n+1}}\sup_{(\nu_j)\in M_{n+1}}\sum_{i=1}^{b^{n+1}}\sum_{j=1}^{b^{n+1}}\mu_i\nu_j\bE[\Delta_2 B(b^{m+1}t^{(n+1)}_{i-1},b^{m+1}t^{(n+1)}_i,b^{k+1}t^{(n+1)}_{j-1},b^{k+1}t^{(n+1)}_j)]\\
&=\sup_{(\mu_i)\in M_{n+1}}\sup_{(\nu_j)\in M_{n+1}}\sum_{u=0}^{b-1}\sum_{v=0}^{b-1}\sum_{i=1}^{b^{n}}\sum_{j=1}^{b^{n}}\mu_{i+ub^n}\nu_{j+vb^n}\bE[\Delta_2 B(b^{m+1}t^{(n+1)}_{i+ub^n-1},b^{m+1}t^{(n+1)}_{i+ub^n},b^{k+1}t^{(n+1)}_{j+vb^n-1},b^{k+1}t^{(n+1)}_{j+vb^n})]\\
&=\sup_{(\mu_i)\in M_{n+1}}\sup_{(\nu_j)\in M_{n+1}}\sum_{u=0}^{b-1}\sum_{v=0}^{b-1}\sum_{i=1}^{b^{n}}\sum_{j=1}^{b^{n}}\mu_{i+ub^n}\nu_{j+vb^n}\bE[\Delta_2B(b^m t^{(n)}_{i-1},b^m t^{(n)}_i,b^k t^{(n)}_{j-1},b^k t^{(n)}_j)]\\
&\leq\sup_{{\bf r}\in\bS_b}\sup_{{\bf s}\in\bS_b}\sum_{u=0}^{b-1}\sum_{v=0}^{b-1}\sup_{(\mu_i)\in I({\bf r})}\sup_{(\nu_j)\in I({\bf s})}\sum_{i=1}^{b^{n}}\sum_{j=1}^{b^{n}}\mu_{i+ub^n}\nu_{j+vb^n}\rho^{(m,k)}_{i,j}.
\end{align*}
 If $u,v\in[0,b-1]\cap\bZ$ are given and $(\mu_i)\in I({\bf r})$ and $(\nu_j)\in I({\bf s})$, then by definition
\begin{align*}\sum_{i=1}^{b^{n}}\sum_{j=1}^{b^{n}}\mu_{i+ub^n}\nu_{j+vb^n}\rho^{(m,k)}_{i,j}\leq G_{n,m,k}(r_u,s_v).
\end{align*}
Thus, by the homogeneity property \eqref{hom},
\begin{align*}
G_{n+1,m+1,k+1}(1,1)&\leq \sup_{{\bf r}\in \bS_b}\sup_{{\bf s}\in\bS_b}\sum_{u=0}^{b-1}\sum_{v=0}^{b-1}G_{n,m,k}(r_u,s_v)\\
&=\sup_{{\bf r}\in\bS_b}\sup_{{\bf s}\in\bS_b}\sum_{u=0}^{b-1}\sum_{v=0}^{b-1}(r_us_v)^{1/q}G_{n,m,k}(1,1)\\
&=G_{n,m,k}(1,1)\bigg(\sup_{{\bf s}\in\bS_b}\sum_{u=0}^{b-1}r_u^{1/q}\bigg)\bigg(\sup_{{\bf s}\in\bS_b}\sum_{v=0}^{b-1}s_v^{1/q}\bigg)\\
&= b^{2H}G_{n,m,k}(1,1),
\end{align*}where the final step follows from \eqref{hol}. Next, observe that $G_{n,m,k}=G_{n,k,m}$ so that
\begin{align}
G_{n+1}&=\sum_{m=0}^{n}\sum_{k=0}^{n}\alpha^{m+k}G_{n+1,m,k}(1,1)\nonumber\\
&=\sum_{\substack{0\leq m,k\leq n\\ mk=0}}\alpha^{m+k}G_{n+1,m,k}(1,1)+\alpha^2\sum_{m=0}^{n-1}\sum_{k=0}^{n-1}\alpha^{m+k}G_{n+1,m+1,k+1}(1,1)\nonumber\\
&\leq L\sum_{k=0}^{n}\alpha^{k}G_{n+1,0,k}(1,1)+\alpha^2b^{2H}\sum_{m=0}^{n-1}\sum_{k=0}^{n-1}\alpha^{m+k}G_{n,m,k}(1,1)\nonumber\\
&= L\sum_{k=0}^{n}\alpha^{k}G_{n+1,0,k}(1,1)+\alpha^2b^{2H}G_n.\label{gn}
\end{align}
By \Cref{l7}, 
$$\sum_{k=0}^{n}\alpha^{k}G_{n+1,0,k}(1,1)\leq \sum_{k=0}^{n}\alpha^{k}F_{n+1,k}=F_{n+1}\le L\lambda^n.$$ 
Since $\alpha^2b^{2H}<1$,  \Cref{p} now yields $G_n\le L\lambda^n$.
\end{proof}

Combining \Cref{p4} and \Cref{l6} proves  \Cref{main thm} (b) in the case $t=1$. In the next subsection, we sketch a proof for the case $0\le t<1$.

\subsubsection{Linearity of the $p$-th variation}\label{linear section}

Now we sketch how the preceding arguments can be modified so as to obtain a proof of \Cref{main thm} (b) for all $t\in[0,1]$. The details will be left to the reader. 
We consider $M\in\bN$, $r \in\bN_0$, and an interval $I=[r  b^{-M},(r +1)b^{-M}]\se [0,1]$. The goal is to show that the $p$-th variation of $X$  on $I$ is equal to $\<X\>^{(p)}_1$ times the length of $I$.
For given $n\in\bN$, 
the $n^{\text{th}}$ order approximation of the $p$-th variation of $X$  on $I$ is then
\begin{align*}
V_{I,n}&:=\sum_{k=0}^{b^{n-M}-1}|X(r  b^{-M}+(k+1)b^{-n})-X(r  b^{-M}+kb^{-n})|^p\\
&=\sum_{k=0}^{b^{n-M}-1}\bigg|\sum_{m=0}^{n-1}\alpha^m (B(\fp{r  b^{m-M}+(k+1)b^{m-n}})-B(\fp{r  b^{m-M}+kb^{m-n}}))\bigg|^p\\
&=\sum_{k=0}^{b^{n-M}-1}\bigg|\sum_{m=1}^{n}\alpha^{n-m} (B(\fp{(r  b^{n-M}+(k+1))b^{-m}})-B(\fp{(r  b^{n-M}+k)b^{-m}})\bigg|^p\\
&=b^{-M}(\alpha^p b)^n\bE_R\bigg[\bigg|\sum_{m=1}^n\alpha^{-m}(B(\fp{(S_n+1)b^{-m}})-B(\fp{S_nb^{-m}})\bigg|^p\bigg],
\end{align*}
where $S_n$ is a random variable on $(\Omega_R,\cF_R,\bP_R)$ with a uniform  distribution on $\{rb^{n-M},rb^{n-M}+1,\cdots,(r+1)b^{n-M}-1\}$. Note that our random variables $(R_m)$ were constructed in such a way that $R_mb^{-m}=\fp{R_nb^{-m}}$. So all we need is to replace in Sections \ref{Convergence in expectation section} and 
\ref{conce}  all terms of the form $R_mb^{-m}$ with $\fp{S_nb^{-m}}$ and verify that all arguments still go through. Indeed, the expectation $\bE_W[V_{I,n}]$ can  be analyzed exactly as in \Cref{p1} and \Cref{c1}, and one obtains
$$\bE_W[V_{I,n}]=b^{-M}\frac{(1+o(1))c_H}{(1-\alpha^2b^{2H})^{p/2}}.
$$
Note that the factor $b^{-M}$ is just the length of $I$.  
For the concentration inequality, we simply restrict the sequence $(\mu_k)$ to the indices $\{k:[t_{k-1},t_k]\se I\}$.

\subsection{Proof of \Cref{critical case H=1/2 thm} }\label{proof of critical case H=1/2 thm section}

For simplicity, we only consider the case $t=1$. The extension to the case $0<t<1$ can be obtained in the same way as at the end of \Cref{thm (c) proof section}. 

Next, we claim that we may assume without loss of generality that $\kappa$ is equal to the standard choice \eqref{standard kappa}, which for $H=1/2$ is simply given by $\kappa(t)=t$. To this end, let $B(t)=W(t)-\kappa(t)W(1)$ be the Brownian bridge with a generic function $\kappa:[0,1]\to[0,1]$ satisfying $\kappa(0)=0$ and $\kappa(1)=1$ and being H\"older continuous with exponent $\tau>1/2$. The corresponding Wiener--Weierstra\ss\ bridge is denoted by $X$. For the moment, we denote by $\wt B(t)=W(t)-tW(1)$ the standard Brownian bridge and let  $\wt X$  be the corresponding processes. Then $\wt X=X+f\cdot W(1)$, where $f(t)=\sum_{n=0}^\infty \alpha^n\phi(\fp{b^n t})$ for 
$\phi(t):=t-\kappa(t)$. Since $\phi$ is H\"older continuous with exponent $\tau>1/2$ and $\alpha^2b=1$, \Cref{p variation prop} yields that $f$ has a finite quadratic variation $\<f\>^{(2)}_1$. Thus, the following lemma yields that \eqref{critical case H=1/2 eqn} 
 holds for $X$ if and only if it holds for $\wt X$. The assertion for $p> 2$ is obtained in a similar way from Lemma 2.4 in \cite{SchiedZZhang}. Thus, we may assume in the sequel that $\kappa(t)=t$. 

\begin{lemma}\label{qv lemma} For any function $h\in C[0,1]$ and $n\in\bN$, let us denote 
$$V_n(h):=\sum_{k=0}^{b^n-1}\big(h((k+1)b^{-n})-h(kb^{-n})\big)^2.
$$
Now suppose that $f,g\in C[0,1]$ are functions for which $\limsup_n V_n(f)<\infty$ and $n^{-1}V_n(g)\to c$, where $c\ge0$.  Then $n^{-1}V_n(f+g)\to c$.
\end{lemma}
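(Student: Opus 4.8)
The plan is to exploit the elementary fact that for each fixed $n$ the map $h\mapsto V_n(h)^{1/2}$ is a seminorm on $C[0,1]$: it is precisely the Euclidean norm of the increment vector $\big(h((k+1)b^{-n})-h(kb^{-n})\big)_{0\le k<b^n}\in\bR^{b^n}$. Hence Minkowski's inequality applied to the vectors associated with $f+g$ and with $g$ yields the two-sided estimate
\begin{equation*}
\big|V_n(f+g)^{1/2}-V_n(g)^{1/2}\big|\le V_n(f)^{1/2}.
\end{equation*}

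Next I would divide this inequality by $n^{1/2}$, obtaining
\begin{equation*}
\Big|\big(n^{-1}V_n(f+g)\big)^{1/2}-\big(n^{-1}V_n(g)\big)^{1/2}\Big|\le \big(n^{-1}V_n(f)\big)^{1/2}.
\end{equation*}
By the hypothesis $\limsup_n V_n(f)<\infty$, the right-hand side converges to $0$ as $n\ua\infty$; and the assumption $n^{-1}V_n(g)\to c$ gives $\big(n^{-1}V_n(g)\big)^{1/2}\to c^{1/2}$ (valid also when $c=0$). Therefore $\big(n^{-1}V_n(f+g)\big)^{1/2}\to c^{1/2}$, and squaring gives $n^{-1}V_n(f+g)\to c$, which is the claim.

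There is essentially no obstacle: the argument is a direct squeeze built on the triangle inequality for the $\ell^2$-norm, and the continuity of $f$ and $g$ is used only to ensure that the finite sums $V_n(\cdot)$ are well defined. The only point worth an explicit remark is that the case $c=0$ requires no separate treatment, since it is covered by the same chain of inequalities. This lemma, together with \Cref{p variation prop} applied to $f(t)=\sum_{n\ge0}\alpha^n\phi(\fp{b^n t})$ with $\phi(t)=t-\kappa(t)$ (which is H\"older of exponent $\tau>1/2$, so $f$ has finite quadratic variation when $\alpha^2 b=1$), then justifies the reduction to $\kappa(t)=t$ used in the proof of \Cref{critical case H=1/2 thm}.
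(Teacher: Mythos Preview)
Your proof is correct and essentially the same as the paper's. The paper expands $V_n(f+g)=V_n(f)+V_n(g)+2\sum_k(\Delta_kf)(\Delta_kg)$ and bounds the cross term by $2V_n(f)^{1/2}V_n(g)^{1/2}$ via Cauchy--Schwarz, then divides by $n$; your version packages this same estimate as the triangle inequality $|V_n(f+g)^{1/2}-V_n(g)^{1/2}|\le V_n(f)^{1/2}$ and divides by $n^{1/2}$---the two arguments are the same computation in slightly different dress.
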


\begin{proof} We have 
$$\frac1nV_n(f+g)=\frac1n V_n(f)+\frac1nV_n(g)+\frac2n \sum_{k=0}^{b^n-1}\big(f((k+1)b^{-n})-f(kb^{-n})\big)\big(g((k+1)b^{-n})-g(kb^{-n})\big).
$$
The Cauchy--Schwarz inequality implies that the absolute value of the rightmost term is bounded by $2n^{-1}V_n(f)^{1/2}V_n(g)^{1/2}$, and this expression converges to zero by our assumptions.
\end{proof}

To prove the assertion of \Cref{critical case H=1/2 thm}, we need to show \eqref{critical case H=1/2 eqn} and, in addition, that the $p$-th variation of $X$ vanishes for $p>2$; that the $p$-th variation of $X$ for $p<2$ is infinite will then follow from \eqref{critical case H=1/2 eqn}  by using the argument in  the final step in the proof of Theorem 2.1 in~\cite{MishuraSchied2}. As in the proof of \Cref{main thm}, we show first convergence of the  expectation.
\Cref{l1} states that the expected $p$-th variation  is of the form
$$
\bE_W[V_n]=(\alpha^pb)^n\bE\bigg[\bigg|\int_0^1f_n(x)\,dW(x)\bigg|^p\bigg],
$$
where $\bE[\,\cdot\,]$ denotes the expectation with respect to $\bP=\bP_W\otimes\bP_R$ and $f_n(x)=g_n(x)-h_n$ for
 $$g_n(x):=\sum_{m=1}^n\alpha^{-m}\bone_{[R_mb^{-m},(R_m+1)b^{-m}]}(x)\quad\text{and}\quad
h_n:=\sum_{m=1}^n\alpha^{-m}((R_m+1)b^{-m}-R_mb^{-m}).$$
In our present case, we have $\alpha^2b=1=\alpha b^{1/2}$, and so the factor $(\alpha^pb)^n$ in front of the expectation is equal to 1 for $p=2$ and it decreases geometrically for $p>2$.  
In the proof of \Cref{p1},  Equations \eqref{expectation factor eq} and 
\eqref{d0}  are still valid. However, 
  the diagonal terms \eqref{d1} are  simply equal to $n$ and so $C_n$ from \eqref{d1} must now be replaced with $C_n=n$. \Cref{d1a} thus becomes
  $$
\bE\bigg[\bigg|\int_0^1g_n(x)\,dW(x)\bigg|^p\bigg]=c_H\bE_R\big[\big|n+\eta_n\big|^{p/2}\big].
$$
Note next that $c_H=1$ in our case $H=1/2$. 
Moreover, \Cref{etan expectation eq} remains valid, but only the case $\alpha b>1=\alpha^2b$ can occur, so that $\bE[|\eta_n|]=O(1)$. 
   Using \eqref{d1a}, one thus shows by using similar arguments as in \Cref{l2} that for $p=2$,
  \begin{equation}\label{crit case gn conv eq}
  \frac1n\bE\bigg[\bigg|\int_0^1g_n(x)\,dW(x)\bigg|^2\bigg]=\frac{1}{n}\bE_R\big[\big|n+\eta_n\big|\big]\longrightarrow 1.
    \end{equation}
For $p>2$, on the other hand, there exists $\lambda\in(0,1)$ such that
$$(\alpha^pb)^n\bE\bigg[\bigg|\int_0^1g_n(x)\,dW(x)\bigg|^p\bigg]=o(\lambda^n).
$$
Finally, one shows just as in \Cref{c1} that $g_n$ can be replaced with $f_n$ in \eqref{crit case gn conv eq}. Altogether, this yields that $\frac1n\bE[V_n]\to1$ for $p=2$ and $\bE[V_n]=o(\lambda^n)$ for $p>2$.

Having established the convergence of the  expectation, we now turn toward the almost sure convergence. For $p>2$, we have $\bE[V_n ]\le L\lambda^n$ for some $L>0$ and $\lambda\in(0,1)$. We choose $\nu\in(\lambda,1)$ and apply Markov's inequality to get
$$\bP(V_n\ge\nu^n)\le \frac{\bE[V_n]}{\nu^n}\le L\Big(\frac\lambda\nu\Big)^n.
$$
Hence, the Borel-Cantelli lemma yields $V_n\to0$ $\bP$-a.s.

For $p=2$ we use again a concentration bound. However, the method used in the proof of  \Cref{main thm} does not work in the critical case. The main reason is that the inequality $\alpha b^H<1$ no longer holds, so that we are not able to conclude from \eqref{Fn} and \eqref{gn} that $\wt F_n$ and $G_n$ decay geometrically. We therefore use a somewhat different approach here. First, we fix $n$ and let again $t_i:=ib^{-n}$. Following the proof of Theorem 1 in~\cite{KleinGine}, we fix $n$ and let $(Y_i)_{i=0,\dots, b^n-1}$ be an orthonormal basis for the linear hull of $\{X(t_1)-X(t_0),\dots,X(t_{b^n})-X(t_{b^n-1})\}$ in $L^2(\Omega_W,\cF_W,\bP_W)$. Then we let $B=(b_{i,j})_{i,j=1,\dots, b^n}$ denote the matrix with entries $b_{i,j}=\bE_W[(X(t_i)-X(t_{i-1}))Y_j]$. Then we define $A=(a_{i,j})_{i,j=1,\dots, b^n}$ for $a_{i,j}=\bE_W[(X(t_i)-X(t_{i-1}))(X(t_j)-X(t_{j-1}))]$ and observe that $A=BB^\top$. Finally, we define $C:=B^\top B$. Since $V_n=Y^\top CY$, Hanson and Wright's bound \cite{HansonWright} yields that there are constants $L_1,L_2>0$ such that for $s>0$,
\begin{equation*}
\bP_W\big(|V_n-\bE_W[V_n]|\ge s\big)\le2\exp\bigg({-\min\bigg\{\frac{L_1 s}{\|C\|},\frac{L_2 s^2}{\text{trace} \,C^2}\bigg\}}\bigg),
\end{equation*}
where $\|C\|$ is the spectral radius of $C$. Then one argues as in the proof of Theorem 1 in~\cite{KleinGine} that 
\begin{equation*}
\text{trace} \,C^2\le  \|C\|\cdot\text{trace} \,A=  \|C\|\cdot {\bE_W[V_n]}\qquad\text{and}\qquad  \|C\|\le \inf_{m\in\bN}(\text{trace}( A^m))^{1/m}.
\end{equation*}
Since we have seen above that $\frac1n\bE_W[V_n]\to 1$, there is a constant $L_3$ such that  
\begin{equation*}
\bP_W\big(|V_n-\bE_W[V_n]|\ge s\big)\le2\exp\bigg(-\frac{L_3({s\wedge \frac{s^2}n})}{\inf_{m\in\bN}(\text{trace}( A^m))^{1/m}}\bigg).
\end{equation*}
We will show below that there is a constant $L>0$ such that 
for sufficiently large $n$,
\begin{equation}\label{A traces eq}
\inf_{m\in\bN}(\text{trace}( A^m))^{1/m}\le L.\end{equation}
Hence, for those $n$,
$$\bP_W\Big(\Big|\frac1nV_n-\frac1n\bE_W[V_n]\Big|\ge n^{-1/4}\Big)\le 2\exp\Big(-\frac{L_3}{L}\big(n^{3/4}\wedge n^{1/2}\big)\Big),
$$
and so the Borel--Cantelli lemma yields that $\lim_n\frac1n V_n=\lim_n\frac1n \bE_W[V_n]=1$.

It remains to establish \eqref{A traces eq}. 
We first  need upper bounds for $|a_{i,j}^{(n)}|$. Recalling the shorthand notation \eqref{rhomkij}, we have 
\begin{align*}
\sum_{j=1}^{b^n}|a_{i,j} |
&=\sum_{j=1}^{b^n}\bigg|\sum_{m=0}^{n-1}\sum_{k=0}^{n-1}\alpha^{m+k}\rho_{i,j}^{(m,k)}\bigg|
\leq 2\sum_{0\leq k\leq m\leq n}\alpha^{m+k}\sum_{j=1}^{b^n}\big|\rho_{i,j}^{(m,k)}\big|.
\end{align*}
One checks that the intervals $(\fp{ t_{i-1} b^{m}} ,\fp{ t_ib^{m}} )$ and $(\fp{ t_{j-1}b^{k}} ,\fp{t_jb^{k}} )$ are either disjoint or have containment relationship. Using our assumption $\kappa(t)=t$, we find that in the disjoint case, 
$$
\big|\rho_{i,j}^{(m,k)}\big|=\left|\bE\left[\big(B(\fp{t_ib^{m}})-B(\fp{ t_{i-1} b^{m}})\big)\big(B(\fp{t_jb^{k}})-B(\fp{t_{j-1} b^{k}})\big)\right]\right|\leq b^{k+m-2n}.
$$
In the containment case, we have
$$
\big|\rho_{i,j}^{(m,k)}\big|=\left|\bE\left[\big(B(\fp{t_ib^{m}})-B(\fp{ t_{i-1} b^{m}})\big)\big(B(\fp{t_jb^{k}})-B(\fp{t_{j-1} b^{k}})\big)\right]\right|\leq {2b^{m\wedge k-n}}.
$$
When fixing $i,k,m$ and letting $j$ vary in $\{0,1,\dots,b^n-1\}$, $b^m$ choices of $j$ will give containment and $b^n-b^m$ others give disjointness. Thus, since $\alpha^2 b=1$,
\begin{align*}
\sum_{j=1}^{b^n}|a_{i,j} |&\leq 2\sum_{0\leq k\leq m\leq n-1}\alpha^{m+k}\big(b^m\cdot 2b^{k-n}+(b^n-b^m)\cdot b^{k+m-2n}\big)\\
&\leq 6b^{-n}\sum_{0\leq k\leq m\leq n-1}(\alpha b)^{m+k}\leq 6b^{-n}\Big(\sum_{m=0}^{n-1}(\alpha b)^m\Big)^2\le L_4\end{align*}
for some constant $L_4$.

Set 
$$
\Lambda_\ell :=\sum_{k_1=0}^{b^n-1}\cdots \sum_{k_{\ell +1}=0}^{b^n-1}|a_{k_1,k_2}|\cdots|a_{k_\ell ,k_{\ell +1}}|.
$$
We then have that\begin{align*}
\text{trace}( A^m)&\leq \Lambda_m=\sum_{k_1=0}^{b^n-1}\cdots \sum_{k_{m}=0}^{b^n-1}|a_{k_1,k_2}|\cdots|a_{k_{m-1},k_{m}}|\sum_{k_{m+1}=0}^{b^n-1}|a_{k_m,k_{m+1}}|\le \Lambda_{m-1}L_4.
\end{align*}
By induction,
\begin{align*}
\text{trace}( A^m)&\leq L_4^{m-1}\Lambda_1=L_4^{m-1}\sum_{k_1=0}^{b^n-1}\sum_{k_2=0}^{b^n-1}|a_{k_1,k_2}|\leq L_4^{m}b^n.
\end{align*}
We conclude that 
\begin{align*}
\inf_{m\in\bN}(\text{trace}( A^m))^{1/m} \leq \inf_{m\in\bN}L_4 b^{n/m}= L_4.
\end{align*}
This proves \eqref{A traces eq}.\qed

\subsection{Proof of \Cref{semimartingale thm}}

We note first that, if $X$ were a semimartingale, its sample paths would admit a continuous and finite quadratic variation. By \Cref{main thm}, we thus need only consider the case $H\wedge K=1/2$.  In the sequel, we are going to distinguish the cases $H=K$, $K<H$, and $H<K$. 

\begin{proof}[Proof of \Cref{semimartingale thm} for $H=K$] The assertion follows immediately from \eqref{critical case H=1/2 eqn}, because a continuous semimartingale cannot have infinite quadratic variation. 
\end{proof}

Now we turn to the case $K<H$. It needs the following preparation. Let $\bS$ be a partition of $[0,1]$. That is, there is $n\in\bN$ and $0=s_0<s_1<\cdots<s_n=1$ such that $\bS=\{s_0,\dots,s_n\}$. By $|\bS|=\max_i|s_i-s_{i-1}|$ we denote the mesh of $\bS$. For  functions $f\in C[0,1]$ and $\Psi:[0,\infty)\to [0,\infty)$, we define
$$v_\Psi(f,\bS):=\sum_{i=1}^n\Psi\big(|f(s_i)-f(s_{i-1})|\big)
$$
and
$$v_\Psi(f):=\lim_{\delta_\da0}\bigg(\sup\Big\{v_\Psi(f,\bS):\text{$\bS$ partition with $|\bS|\le\delta$}\Big\}\bigg).
$$
As a matter of fact, it is clearly sufficient if $\Psi$ is only defined on an interval $[0,x_0)$, provided that the mesh $|\bS|$ is sufficiently small.

\begin{lemma}\label{Phi var lemma} Consider the function 
 $$\Psi(0):=0\qquad\text{and}\qquad\Psi(x):=\frac{x^2}{2\log\log \frac1x},\qquad 0<x<1/e,
$$
and let $f,g\in C[0,1]$.
\begin{enumerate}
\item If $f$ is of bounded variation, then $v_\Psi(f)=0$.
\item If $f$ is H\"older continuous with exponent $1/2$, then  $v_\Psi(f)=0$.
\item If $v_\Psi(g)=0$, then $v_\Psi(f+g)=v_\Psi(f)$.
\end{enumerate}
\end{lemma}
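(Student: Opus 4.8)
\emph{Plan.} I would first isolate three elementary facts about $\Psi$. (i) Extending $\Psi$ continuously to $[0,1/e)$ via $\Psi(0)=0$, it is strictly increasing there, since $x\mapsto x^2$ is increasing while $x\mapsto\log\log(1/x)$ is positive with derivative $-1/(x\log(1/x))<0$ on $(0,1/e)$. (ii) $\Psi(x)/x=x/(2\log\log(1/x))\to0$ as $x\downarrow0$. (iii) For every fixed $c>0$, $\Psi(cx)/\Psi(x)\to c^2$ as $x\downarrow0$, because $\log\log(1/(cx))/\log\log(1/x)\to1$. As remarked right after the statement, for a partition of sufficiently small mesh all increments of a fixed $h\in C[0,1]$ lie in $(0,1/e)$, where $\Psi$ is defined and increasing, so the restricted domain causes no trouble.

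\emph{Parts (a) and (b).} For (a), let $V$ be the total variation of $f$. Given $\eps>0$, use (ii) to choose $\eta\in(0,1/e)$ with $\Psi(x)\le\eps x$ on $(0,\eta]$ and uniform continuity of $f$ to choose $\delta>0$ with $|f(s)-f(t)|\le\eta$ when $|s-t|\le\delta$. Then any partition $\bS=\{s_0<\dots<s_n\}$ with $|\bS|\le\delta$ satisfies
$$v_\Psi(f,\bS)=\sum_{i=1}^n\Psi\big(|f(s_i)-f(s_{i-1})|\big)\le\eps\sum_{i=1}^n|f(s_i)-f(s_{i-1})|\le\eps V,$$
so $v_\Psi(f)\le\eps V$ for all $\eps>0$, i.e.\ $v_\Psi(f)=0$. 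For (b), let $C$ be a $1/2$-Hölder constant for $f$ and fix $\delta>0$ with $C\delta^{1/2}<1/e$. For a partition of mesh $\le\delta$ and any $i$ with $f(s_i)\ne f(s_{i-1})$, bounding the numerator of $\Psi$ by $C^2|s_i-s_{i-1}|$ and, since $\log\log(1/\cdot)$ is decreasing, the denominator from below gives $\Psi(|f(s_i)-f(s_{i-1})|)\le C^2|s_i-s_{i-1}|/(2\log\log(1/(C\delta^{1/2})))$; summing over $i$ bounds $v_\Psi(f,\bS)$ by $C^2/(2\log\log(1/(C\delta^{1/2})))$, which does not depend on $\bS$ and tends to $0$ as $\delta\downarrow0$.

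\emph{Part (c).} The heart of the matter is a quasi-subadditivity estimate: \emph{for every $\eps\in(0,1)$ there exist $\delta>0$ and $C_\eps>0$ such that $\Psi(a+b)\le(1+\eps)\Psi(a)+C_\eps\Psi(b)$ whenever $a,b\ge0$ and $a+b<\delta$.} I would prove this by splitting into two cases. If $b\le\eps a$ then $a+b\le(1+\eps)a<\delta$, so monotonicity gives $\Psi(a+b)\le\Psi((1+\eps)a)$, and by (iii), after choosing $\eps$ small and $\delta$ small accordingly, $\Psi((1+\eps)a)\le(1+\eps)\Psi(a)$. If $b>\eps a$ then $a+b<(1+1/\eps)b<\delta$, so $\Psi(a+b)\le\Psi((1+1/\eps)b)\le C_\eps\Psi(b)$, again using (iii). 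Granting the estimate and assuming $v_\Psi(g)=0$: pick any partition $\bS$ whose mesh is small enough that all increments of $f$, $g$, and $f+g$ are below $\delta$; then monotonicity and the triangle inequality give, with $\Delta_ih:=h(s_i)-h(s_{i-1})$,
$$\Psi\big(|\Delta_i(f+g)|\big)\le\Psi\big(|\Delta_i f|+|\Delta_i g|\big)\le(1+\eps)\Psi\big(|\Delta_i f|\big)+C_\eps\Psi\big(|\Delta_i g|\big),$$
and summing over $i$ yields $v_\Psi(f+g,\bS)\le(1+\eps)v_\Psi(f,\bS)+C_\eps v_\Psi(g,\bS)$. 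Taking the supremum over such $\bS$ and then letting the mesh bound go to $0$ gives $v_\Psi(f+g)\le(1+\eps)v_\Psi(f)+C_\eps v_\Psi(g)=(1+\eps)v_\Psi(f)$; letting $\eps\downarrow0$ gives $v_\Psi(f+g)\le v_\Psi(f)$. The reverse inequality follows by applying this to the pair $(f+g,-g)$ and using $v_\Psi(-g)=v_\Psi(g)=0$, since $v_\Psi$ depends only on the absolute values of the increments.

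\emph{Main obstacle.} Parts (a) and (b) are routine once (i)--(iii) are in hand; the genuinely delicate point is the quasi-subadditivity estimate in (c): controlling the slowly varying factor $\log\log(1/x)$ uniformly, keeping every argument inside the domain $(0,1/e)$ of $\Psi$, and ensuring the constants $1+\eps$ and $C_\eps$ are uniform over all small $a,b$ so that they survive the supremum-and-limit that defines $v_\Psi$.
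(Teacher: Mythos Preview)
Your proofs of (a) and (b) are correct and essentially identical to the paper's: both factor out the slowly varying piece $1/(2\log\log(1/\cdot))$ and bound the remaining sum by, respectively, the total variation and a telescoping sum of mesh lengths.

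For (c) you take a genuinely different route. The paper extends $\Psi$ to a function $\Psi_0$ on $[0,\infty)$, checks that $x\mapsto\log\Psi_0(e^x)$ is convex, and then invokes Mulholland's generalization of Minkowski's inequality to obtain
\[
\Psi_0^{-1}\big(v_{\Psi_0}(f,\bS)\big)-\Psi_0^{-1}\big(v_{\Psi_0}(g,\bS)\big)\le \Psi_0^{-1}\big(v_{\Psi_0}(f+g,\bS)\big)\le \Psi_0^{-1}\big(v_{\Psi_0}(f,\bS)\big)+\Psi_0^{-1}\big(v_{\Psi_0}(g,\bS)\big),
\]
from which both inequalities follow by choosing suitable sequences of partitions. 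Your approach is more elementary and self-contained: instead of Mulholland you exploit the regular variation of $\Psi$ (your fact (iii)) to get a pointwise quasi-subadditivity $\Psi(a+b)\le K_\eps\Psi(a)+C_\eps\Psi(b)$ with $K_\eps\to1$. This avoids any external inequality and would transfer verbatim to any $\Psi$ that is increasing and regularly varying at $0$, whereas the paper's argument needs the log-convexity structure. Conversely, the Mulholland route gives a cleaner two-sided bound in one stroke and does not require the case split.

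One small correction in your Case~1: as written, $\Psi((1+\eps)a)\le(1+\eps)\Psi(a)$ is false, since (iii) gives $\Psi((1+\eps)a)/\Psi(a)\to(1+\eps)^2>1+\eps$. The fix is routine---split on $b\le\eps_0 a$ for an auxiliary $\eps_0$ with $(1+\eps_0)^2<1+\eps$, and then choose $\delta$ small enough that (iii) delivers $\Psi((1+\eps_0)a)\le(1+\eps)\Psi(a)$---but the claim should be stated with this relabeling in place.
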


\begin{proof} (a) Let $\bS=\{s_0,\dots,s_n\}$ be a partition with $|\bS|$ sufficiently small. Then
$$0\le v_\Psi(f,\bS)\le\bigg(\max_j\frac{|f(s_j)-f(s_{j-1})|}{2\log\log |f(s_j)-f(s_{j-1})|^{-1}}\bigg)\sum_{i=1}^n|f(s_i)-f(s_{i-1})|.
$$
As $|\bS|\to0$, the sum on the right converges to the total variation of $f$, and hence to a finite number. The maximum, on the other hand, tends to zero as $|\bS|\to0$ (here we use the conventions $1/0=\infty$, $\log0=-\infty$,  and $\log\infty=\infty$). 

(b) Let $c>0$ be such that $|f(t)-f(s)|\le c|t-s|^{1/2}$. Then, for $\bS=\{s_0,\dots,s_m\}$, 
\begin{align*}
0\le v_\Psi(f,\bS)\le\bigg(\max_j\frac{1}{2\log\log |f(s_j)-f(s_{j-1})|^{-1}}\bigg)\sum_{i=1}^nc|s_i-s_{i-1}|.
\end{align*}
Clearly, the value of the telescopic sum is $c$, while the maximum tends to zero as $|\bS|\to0$.

(c) One checks that $\Psi$ is increasing and strictly convex. Now we let $x_0:=1/(2e)$ and define $\Psi_0$ as that function which is equal to $\Psi$ on $[0,x_0]$ and, for $x>x_0$,  equal to $\Psi(x_0)(x/x_0)^a$ for $a=2+1/((1+\log2)\log(1+\log2))$. Then  there exists $\delta>0$ such that $v_{\Psi_0}(h,\bS)=v_\Psi(h,\bS)$ holds for all functions $h\in\{f,g,f+g\}$ and partitions $\bS$ with $|\bS|<\delta$. Next, one checks that 
 $\log\big(\Psi_\delta(e^x)\big)$ is strictly increasing and convex. Thus,  we may apply Mulholland's extension of Minkowski's inequality \cite{Mulholland}. In our context, it implies that 
\begin{equation}\label{Mulholland ineq}
\Psi_0^{-1}\big(v_{\Psi_0}(f,\bS)\big)-\Psi_0^{-1}\big(v_{\Psi_0}(g,\bS)\big)\le \Psi_0^{-1}\big(v_{\Psi_0}(f+g,\bS)\big)\le \Psi_0^{-1}\big(v_{\Psi_0}(f,\bS)\big)+\Psi_0^{-1}\big(v_{\Psi_0}(g,\bS)\big).
\end{equation}
Taking a sequence of partitions $(\bS_n)_{n\in\bN}$ with $|\bS_n|\to0$ and $v_{\Psi_0}(f+g,\bS_n)\to v_{\Psi_0}(f+g)$, applying the right-hand side of \eqref{Mulholland ineq}, and passing to the limit as $n\ua\infty$ thus yields that 
\begin{align*}
\Psi_0^{-1}\big(v_{\Psi_0}(f+g)\big)&\le\liminf_{n\ua\infty}\big( \Psi_0^{-1}\big(v_{\Psi_0}(f,\bS_n)\big)
+ \Psi_0^{-1}\big(v_{\Psi_0}(g,\bS_n)\big)\big)\le   \Psi_0^{-1}\big(v_{\Psi_0}(f)\big).
\end{align*}
In the same manner, we get $v_{\Psi_0}(f+g)\ge v_{\Psi_0}(f)$ by taking a sequence of partitions $(\bS_n)_{n\in\bN}$ with $|\bS_n|\to0$ and $v_{\Psi_0}(f,\bS_n)\to v_{\Psi_0}(f)$ and applying the left-hand side of \eqref{Mulholland ineq}.
\end{proof}

\begin{proof}[Proof of \Cref{semimartingale thm} for $K<H$] We only need to consider the case in which $X$ admits a finite and nontrivial quadratic variation, which by \Cref{main thm} and our assumption $K<H$ is tantamount to $1/2=H\wedge K=K$.
 We assume by way of contradiction that $X$ can be decomposed as $X= M+ A$, where $  M$ is a continuous local martingale with $M_0=0$ and $  A$ is a process whose sample paths are of bounded variation on $[0,1]$. By \Cref{main thm}, $\<X\>_t=\<M\>_t =Vt$ for a random variable $V>0$. Since $X$ is Gaussian, Stricker's theorem \cite{Stricker} implies that $M$ is Gaussian and thus has independent increments. Hence, $V$ must be equal to a  constant $c>0$. Therefore, $ B:=c^{-1/2} M$ is a Brownian motion by L\'evy's theorem. In the formulation of 
 Corollary 12.24 in \cite{dudley2011concrete}, a theorem by Taylor states that $v_\Psi(B)=1$ $\bP$-a.s. Since $v_\Psi(c^{-1/2}A)=0$ 
$\bP$-a.s.~by \Cref{Phi var lemma} (a), we must have that $v_\Psi(c^{-1/2}X)=1$ by  \Cref{Phi var lemma} (c). However, the sample paths of $X$ are H\"older continuous with exponent $1/2$ according to \Cref{Hoelder prop}, which is a contradiction to  \Cref{Phi var lemma} (b).
\end{proof}

\begin{proof}[Proof of \Cref{semimartingale thm} for $H<K$] 
In this case, we have $H=1/2$. We let $(\cF_t)_{t\ge0}$ denote the natural filtration of $X$ and $\Lambda^n_j=X(jb^{-n})-X((j-1)b^{-n})$. Our goal is to prove that there is a constant $\lambda>0$ such that for all sufficiently large $n$,
\begin{align*}S_n:=\sum_{j=0}^{b^n-1}\bE\Big[\bE[\Lambda^n_{j+1}|\cF_{t_j}]^2\Big]\geq \lambda.
\end{align*}
This will imply that $X$ is not a quasi-Dirichlet process in the sense of  \cite[Definition 3]{RussoTudor} and hence not a semimartingale (see the proof of \cite[Proposition 6]{RussoTudor} for details). To this end, note first that by Jensen's  inequality for conditional expectations and for $j\ge (b-1)b^{n-1}$, 
$$\bE\Big[\bE[\Lambda^n_{j+1}|\cF_{t_j}]^2\Big]\ge \bE\Big[\bE[\Lambda^n_{j+1}|\Lambda^n_{j+1-b^{n-1}}]^2\Big].
$$
Next, since $(\Lambda^n_{j+1},\Lambda^n_{j+1-b^{n-1}})$ is a centered Gaussian vector, the conditional expectation on the right-hand side is given as follows,
$$\bE[\Lambda^n_{j+1}|\Lambda^n_{j+1-b^{n-1}}]=\frac{\bE[\Lambda^n_{j+1}\Lambda^n_{j+1-b^{n-1}}]}{\bE[(\Lambda^n_{j+1-b^{n-1}})^2]}\Lambda^n_{j+1-b^{n-1}}.
$$
We will show in \Cref{covariance lower bound lemma} that $\bE[\Lambda^n_{j+1}\Lambda^n_{j+1-b^{n-1}}]\ge \lambda_1b^{-n}$ for some constant $\lambda_1>0$. Moreover, \Cref{MC} will show that the denominator is bounded by $L_1b^{-n}$ for another constant $L_1$. 
Hence, 
$$S_n\ge \sum_{j=(b-1)b^{n-1}}^{b^n-1}\frac{\bE[\Lambda^n_{j+1}\Lambda^n_{j+1-b^{n-1}}]^2}{\bE[(\Lambda^n_{j+1-b^{n-1}})^2]}\ge \sum_{j=(b-1)b^{n-1}}^{b^n-1}\frac{\lambda_1b^{-n}}{L_1} ,
$$
which is bounded from below by $\lambda:=\lambda_1/L_1$.
\end{proof}

The following lemma shows in particular that the Wiener--Weierstra\ss\ bridge with $H=1/2<K$ is, at least locally, a quasi-helix in the sense of Kahane \cite{Kahane1981,kahane1993some}. Analogous estimates will be derived in the more general case $H\leq K$ in an upcoming work, but $H=1/2<K$ is all we need here.

\begin{lemma} \label{MC}
Let $ X$ be the Wiener--Weierstra\ss\ bridge  with $H=1/2<K$. 
\begin{enumerate}
\item There exists a constant $L>0$ such that for all $s,t\in[0,1]$,
$$ \bE[(X(t)-X(s))^2]\leq L|t-s|.$$
\item For each $\lambda\in(0,1)$ there exists $\eps>0$ such that for $s,t\in[0,1]$ with $|t-s|<\eps$,
$$ \bE[(X(t)-X(s))^2]\ge\lambda|t-s|.
$$
\end{enumerate}
\end{lemma}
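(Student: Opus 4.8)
The plan is to handle the two parts separately, the upper bound being essentially routine and the lower bound requiring a bit more care. For (a) I would start from $X(t)-X(s)=\sum_{m\ge0}\alpha^m\big(B(\fp{b^mt})-B(\fp{b^ms})\big)$ and first establish, uniformly in $s,t$, that $\big\|B(\fp{b^mt})-B(\fp{b^ms})\big\|_{L^2}\le C\min(b^m|t-s|,1)^{1/2}$. When $\fp{b^ms}$ and $\fp{b^mt}$ do not straddle an integer this is the standard Brownian increment bound together with the $\tau$-H\"older control of $\kappa$ (recall $\tau>1/2$); when they do straddle one — so that $\fp{b^mt}$ is close to $0$ and $\fp{b^ms}$ close to $1$ — I would use $B(0)=B(1)=0$ to write $B(\fp{b^mt})-B(\fp{b^ms})=\big(B(\fp{b^mt})-B(0)\big)+\big(B(1)-B(\fp{b^ms})\big)$, two bridge increments over intervals whose lengths add to $b^m|t-s|$. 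Minkowski's inequality, a split of the series at the scale $N$ with $b^N\asymp|t-s|^{-1}$, and summation of the resulting geometric series then give $\|X(t)-X(s)\|_{L^2}\le L|t-s|^{1/2}$, using $\alpha\sqrt b<1$ (which is exactly $K>1/2=H$) for the scales $m<N$ and $-\log_b\alpha>1/2$ for the scales $m\ge N$.

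For (b), fix $\lambda\in(0,1)$, let $n^*$ be the largest generation at which $s$ and $t$ lie in a common $b$-adic cell, and peel off the first $n^*+1$ levels via $X(u)=B(\fp{u})+\alpha X(\fp{bu})$:
\[
X(t)-X(s)=\sum_{m=0}^{n^*}\alpha^m\big(B(\fp{b^mt})-B(\fp{b^ms})\big)+\alpha^{n^*+1}\big(X(\fp{b^{n^*+1}t})-X(\fp{b^{n^*+1}s})\big).
\]
For $0\le m\le n^*$ the fractional parts do not wrap, so $B(\fp{b^mt})-B(\fp{b^ms})=\int_0^1\mathbf 1_{(\fp{b^ms},\fp{b^mt})}\,dW-c_\kappa'W(1)$ with $c_\kappa'$ of smaller order; writing $g_+:=\sum_{m=0}^{n^*}\alpha^m\mathbf 1_{(\fp{b^ms},\fp{b^mt})}$ and dropping the non-negative cross terms gives $\|g_+\|_{L^2}^2\ge|t-s|\sum_{m=0}^{n^*}(\alpha^2b)^m\ge|t-s|$. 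This inequality, with constant exactly $1$, is what will produce the sharp constant. I would then distinguish two regimes. If $n^*$ exceeds a constant $n_0=n_0(\alpha,b,\lambda)$, then — using part (a), and for the first tail level again $X(0)=X(1)=0$ exactly as above — the tail has $L^2$-norm at most $C(\alpha\sqrt b)^{n^*+1}|t-s|^{1/2}$, which is a small multiple of $|t-s|^{1/2}$; since $\var\big(\sum_{m\le n^*}\alpha^m(B(\fp{b^mt})-B(\fp{b^ms}))\big)=\|g_+-c_\kappa'\mathbf 1_{[0,1]}\|_{L^2}^2\ge(1-o(1))|t-s|$, the triangle inequality finishes this case.

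If instead $n^*\le n_0$, then $\fp{b^{n^*}s}$ and $\fp{b^{n^*}t}$ are forced to straddle an \emph{interior} $b$-adic point $\ell/b$ with $1\le\ell\le b-1$; consequently all of $\fp{b^ms},\fp{b^mt}$ with $0\le m\le n^*$ stay a fixed distance from $\{0,1\}$ (so $g_+$ is supported in the interior), while $\fp{b^ms},\fp{b^mt}$ with $n^*<m\le n^*+M$ lie within $O(b^{M}|t-s|)$ of $\{0,1\}$. I would condition on $W(1)$ and on the values of $W$ at the finitely many points $\fp{b^ms},\fp{b^mt}$ with $n^*<m\le n^*+M$: this leaves the first-$n^*$-level part of $X(t)-X(s)$ with its $W$-randomness intact, turns the middle levels into a $\mathcal G$-measurable constant, and leaves a residual tail of levels $>n^*+M$ that is $o(|t-s|^{1/2})$ in $L^2$, uniformly, again by $\alpha\sqrt b<1$ and $-\log_b\alpha>1/2$. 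The conditional variance of the first part equals $\|g_+-\Pi g_+\|_{L^2}^2$, where $\Pi$ projects onto step functions with breakpoints at the conditioning points; since those breakpoints cluster near $\{0,1\}$ and $g_+$ is supported in the interior with total mass $|t-s|\sum_{m\le n^*}(\alpha b)^m=o(1)$, the function $g_+$ lies in a single cell of length $1-o(1)$, so $\|\Pi g_+\|_{L^2}^2=O(|t-s|^2)$ and $\|g_+-\Pi g_+\|_{L^2}^2\ge(1-o(1))|t-s|$. Then $\bE[(X(t)-X(s))^2]\ge\bE[\var(X(t)-X(s)\mid\mathcal G)]\ge(1-o(1))|t-s|$ after absorbing the tail error (choose $M$ large, then $|t-s|$ small), giving $\bE[(X(t)-X(s))^2]\ge\lambda|t-s|$.

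The hard part is the geometric bookkeeping of the $b$-adic orbit: showing that images of $s,t$ at generations above $n^*$ cluster near $\{0,1\}$ while those at generations $\le n^*$ do not (so that $\Pi g_+$ is genuinely negligible), and controlling the residual tail uniformly in $n^*$ — all while keeping the leading constant exactly $1$, which rests on the elementary inequality $\big(\sum a_m\big)^2\ge\sum a_m^2$ for $a_m\ge0$. A secondary but recurring point, already needed in part (a), is that wrapping fractional parts must always be split at $0$ and $1$ and handled via $B(0)=B(1)=0$ rather than by a naive increment bound.
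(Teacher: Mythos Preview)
Your argument for part (a) is correct and essentially the same as the paper's: Minkowski plus the geometric series $\sum(\alpha\sqrt b)^m$, with your wrap-around split via $B(0)=B(1)=0$ making explicit a step the paper leaves implicit.

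For part (b), your approach is correct but takes a genuinely different route. The paper writes $X(t)-X(s)=\int_0^1 g\,dW$ for one global integrand $g$, introduces the first ``flip'' generation $\ell:=\inf\{k<M:\fp{b^ks}>\fp{b^kt}\}$ (with $M=\lfloor-\log_b(t-s)\rfloor$), decomposes $g=g_1+g_2+g_3$, and shows the \emph{pointwise} lower bound $g(x)\ge\sqrt\lambda$ for $x\in[s,t]$; the It\^o isometry then gives $\int_0^1 g^2\ge\int_s^t g^2\ge\lambda|t-s|$ in one stroke. There is no head--tail split, no case distinction on the depth, and no conditioning. Your approach instead splits at the last common-cell generation $n^*$, handles the large-$n^*$ regime by an $L^2$ tail bound, and in the small-$n^*$ regime conditions on $W$ at the intermediate-level images to decouple head from tail via the projection formula for conditional variance. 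This works---the geometric facts you need (that $\fp{b^ms},\fp{b^mt}$ for $m\le n^*$ stay at distance $\ge b^{-(n_0+1)}-O(|t-s|)$ from $\{0,1\}$ because the separating $b$-adic point $j/b^{n^*+1}$ has $j\not\equiv 0\bmod b$, while those for $n^*<m\le n^*+M$ lie within $b^{n_0+M}|t-s|$ of $\{0,1\}$) do hold once $|t-s|$ is small enough that only one integer separates $b^{n^*+1}s$ and $b^{n^*+1}t$---but the bookkeeping is considerably heavier. The paper's pointwise bound on $[s,t]$ is the cleaner idea: it replaces all of your $L^2$ orthogonality and conditioning machinery by the single observation that the $m=0$ indicator already contributes $1$ on $[s,t]$ and the remaining pieces of $g$ are uniformly small there.
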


\begin{proof} (a) There is a constant $L_0$ such that $ \bE[|B(\fp{b^nt})-B(\fp{b^ns})|^2]\le L_0 b^n|t-s|$, due to \eqref{fbb} and the fact that $\kappa$ is H\"older continuous with exponent $\kappa>1/2$. Then one uses the definition \eqref{WW def eq} of the Wiener--Weierstra\ss\ bridge and Minkowski's inequality to obtain
$$ \bE[(X(t)-X(s))^2]^{1/2}\le\sum_{n=0}^\infty \alpha^n  \bE[|B(\fp{b^nt})-B(\fp{b^ns})|^2]^{1/2}\le \sqrt{L_0|t-s|}\sum_{n=0}^\infty (\alpha b^{1/2})^n.
$$
Since, by assumption, $\alpha^2b<1$, (a) follows.

(b) We let $c$ be the H\"older constant of $\kappa$, i.e., $|\kappa(r)-\kappa(u)|\le c|r-u|^\tau$ for all $r,u\in[0,1]$. Then we make the following definitions for $M\in\bN$. 
$$K_1(M):=(1+2\sup|\kappa|)\sum_{m=M}^\infty\alpha^m,\qquad K_2(M):=\begin{cases}
\frac{2c\alpha^M}{\alpha b^\tau-1}&\text{for $\alpha b^\tau>1$,}\\
2cMb^{-\tau M}&\text{for $\alpha b^\tau=1$,}\\
\frac{2cb^{-\tau M}}{1-\alpha b^\tau}&\text{for $\alpha b^\tau<1$.}\\
\end{cases}
$$
Then we choose $L$ be such that $K_1(M)+K_2(M)<1-\sqrt\lambda$ for all $M\ge L$ and set $\eps:=b^{-L}$. Then we fix $0\leq s< t\leq 1$ with $|s-t|\leq \eps$.
 Let $M:=\lfloor{-\log_b}(t-s)\rfloor$ so that $b^{-M-1}<t-s\leq b^{-M}$ and  $M\ge L$. As in the proof of Proposition \ref{Gaussian martingale bridge prop}, we write
\begin{align*}
 X(t)- X(s)&=\sum_{m=0}^\infty \alpha^m(W(\fp{b^mt})-W(\fp{b^ms})-(\kk(\fp{b^mt})-\kk(\fp{b^ms}))W(1))=\int_0^1g(x)\,dW(x)
\end{align*}
as a Wiener integral, where\begin{align*}
g(x):=\sum_{m=0}^{\infty}\alpha^m\bone_{[\fp{b^ms},\fp{b^mt}]}(x)-\sum_{m=0}^\infty \alpha^m (\kk(\fp{b^mt})-\kk(\fp{b^ms})).
\end{align*}
Here we use again the convention that for $x<y$, the indicator function $\Ind{[y,x]}$ is defined as $-\Ind{[x,y]}$.
 Define
\begin{align*}
\ell:=\inf\big\{1\leq k\leq M-1:\fp{b^ks}>\fp{b^kt}\big\}\wedge M.
\end{align*}
 We claim that for $0\leq k<\ell$,
\begin{align}
0\leq \fp{b^ks}<\fp{b^kt}<1\ \text{ and }\ \fp{b^kt}-\fp{b^ks}=b^kt-b^ks\leq b^{k-M},\label{k1}
\end{align}
and for $\ell\leq k<M$,
\begin{align}
0\leq \fp{b^kt}<\fp{b^ks}<1\ \text{ and }\ \fp{b^ks}-\fp{b^kt}= 1-(b^kt-b^ks)\geq 1-b^{k-M}.\label{k2}
\end{align}
These assertions are obvious in case $\ell=M$. For $\ell<M$, we have  $0< b^\ell t-b^\ell s\leq b^{\ell-M}<1$.  Together with $\fp{b^\ell s}>\fp{b^\ell t}$, this implies $\fp{b^\ell  t}+(1-\fp{b^\ell  s})\leq b^{\ell-M}$. It follows that, for $k\in[\ell,M)$, we have 
$\fp{b^kt}=b^{k-\ell}\fp{b^\ell t}\le b^{k-\ell}b^{\ell-M}\le b^{-1}$ and $1-\fp{b^ks}\le b^{k-\ell }(1-\fp{b^\ell s})\leq b^{-1}$. Therefore, $\fp{b^kt}<\fp{b^ks}$, i.e., the order of $\fp{b^kt}$ and $\fp{b^ks}$ flips at most once for $0\leq k<M$ (namely when $k=\ell $) and after they flip, one of them stay close to $0$ and the other one close to $1$. It is thus clear that their distance before the flip must be $b^k(t-s)$, whereas after the flip it is $1-b^k(t-s)$. This proves \eqref{k1} and \eqref{k2}.

According to \eqref{k2}, we have for $\ell\le k<M$ that $\bone_{[\fp{b^ms},\fp{b^mt}]}=1-\bone_{ [0,\fp{b^mt}]\cup[\fp{b^ms},1]}$.
Hence, we may write $g(x)=\sum_{i=1}^3g_i(x)$ where
\begin{align*}
g_1(x)&:=\sum_{m=0}^{\ell -1}\alpha^m \bone_{[\fp{b^ms},\fp{b^mt}]}(x)+\sum_{m=\ell }^{M-1}\alpha^m\bone_{ [0,\fp{b^mt}]\cup[\fp{b^ms},1]}(x),\\
g_2(x)&:=-\displaystyle\sum_{m=0}^{\ell -1}\alpha^m(\kk(\fp{b^mt})-\kk(\fp{b^ms}))-\sum_{m=\ell }^{M-1}\alpha^m\left(1-(\kk(\fp{b^ms})-\kk(\fp{b^mt}))\right),\\
g_3(x)&:=\displaystyle\sum_{m=M}^\infty \alpha^m \bone_{[\fp{b^ms},\fp{b^mt}]}(x)-\sum_{m=M}^\infty \alpha^m (\kk(\fp{b^mt})-\kk(\fp{b^ms})).
\end{align*}
Note that $g_2(x)$ does not depend on $x$.

Clearly, $g_1$ is bounded from below by the term corresponding to $m=0$, i.e., 
\begin{equation}\label{g1 lower est}
g_1\ge \Ind{[s,t]}.
\end{equation}
 We also have
$g_3(x)\geq -K_1(M)
$ for all $x\in[0,1]$.
By (\ref{k2}), for $\ell \leq k<M$, $\fp{b^ks}-\fp{b^kt}\geq 1-b^{k-M}$, so that by H\"{o}lder continuity of $\kk$,  for all $x\in[0,1]$,
\begin{align*}
g_2(x)&\geq -\sum_{m=\ell }^{M-1}\alpha^m (\fp{b^mt}^\tau+(1-\fp{b^ms})^\tau)-\sum_{m=0}^{\ell -1}\alpha^m(b^m(t-s))^{\tau}\geq -2b^{-M\tau}\sum_{m=0}^{M-1}(\alpha b^\tau)^m\geq-K_2(M).
\end{align*}
Therefore, $
g(x)\geq 1+g_2(x)+g_3(x)\geq 1-(K_1(M)+K_2(M))\geq \sqrt{\lambda}$ for $x\in[s,t]$.
Now the It\^o isometry gives
\[
\bE[(X(t)- X(s))^2]=\bE\bigg[\bigg(\int_0^1g(r)\,dW(r)\bigg)^2\bigg]=
 \int_0^1g^{2}(r)\,dr\ge  \int_s^tg^{2}(r)\,dr\geq \lambda|t-s|,
\]
as required. 
\end{proof}

\begin{lemma}\label{covariance lower bound lemma} 
For $H=1/2<K$  there exist  $\lambda>0$ and $M\in\bN$ such that for all $n\ge M$ and $b^{n-1}\le j<b^n$,
\begin{equation}\label{covariance lower bound lemma eq}\bE\Big[\Big(X\big((j+1)b^{-n}\big)-X\big(jb^{-n}\big)\Big)\Big(X\big((j+1)b^{-n}-b^{-1}\big)-X\big(jb^{-n}-b^{-1}\big)\Big)\Big]\ge \lambda b^{-n}.
\end{equation}
\end{lemma}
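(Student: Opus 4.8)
The plan is to isolate the effect of the $b^{-1}$-shift on the Weierstra\ss\ sum. Write $t_j:=jb^{-n}$, $t_{j+1}:=(j+1)b^{-n}$, $s_j:=t_j-b^{-1}=(j-b^{n-1})b^{-n}$ and $s_{j+1}:=t_{j+1}-b^{-1}$ (with $s_j,s_{j+1}\in b^{-n}\bN_0$ since $j\ge b^{n-1}$). For $m\ge 1$ the number $b^{m}(xb^{-n}-b^{-1})=b^{m}xb^{-n}-b^{m-1}$ differs from $b^{m}xb^{-n}$ by the integer $b^{m-1}$, so $\fp{b^m s_j}=\fp{b^m t_j}$ and $\fp{b^m s_{j+1}}=\fp{b^m t_{j+1}}$. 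Hence $X(t_{j+1})-X(t_j)=G+P_t$ and $X(s_{j+1})-X(s_j)=G+P_s$ with the \emph{same} remainder
$$G:=\sum_{m=1}^\infty\alpha^m\Big(B(\fp{b^m t_{j+1}})-B(\fp{b^m t_j})\Big),\quad P_t:=B(\fp{t_{j+1}})-B(t_j),\quad P_s:=B(s_{j+1})-B(s_j).$$
So the covariance in \eqref{covariance lower bound lemma eq} equals $\bE[G^2]+\bE[GP_t]+\bE[GP_s]+\bE[P_tP_s]$, and I would prove, uniformly in $j$, that $\bE[G^2]\ge\lambda_0 b^{-n}$ for some fixed $\lambda_0>0$ and all large $n$, while each of the three mixed terms is $o(b^{-n})$.

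For the main term the key point is that $G$ is again an increment of $X$. Set $\gamma:=\fp{bt_j}=\fp{jb^{1-n}}$ and $\beta:=\fp{bt_{j+1}}$, both in $b^{-(n-1)}\bZ\cap[0,1)$. From $\fp{b^{m'}\fp{bx}}=\fp{b^{m'+1}x}$ one gets $G=\alpha\bigl(X(\beta)-X(\gamma)\bigr)$. Since $(j+1)b^{1-n}-jb^{1-n}=b^{-(n-1)}<1$, either $\beta>\gamma$ with $\beta-\gamma=b^{-(n-1)}$, or $\beta<\gamma$, which forces $\gamma=1-b^{-(n-1)}$, $\beta=0$ and, using $X(0)=X(1)=0$, $X(\beta)-X(\gamma)=X(1)-X(1-b^{-(n-1)})$. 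In either case $X(\beta)-X(\gamma)$ is, up to sign, the increment of $X$ over a $b^{-(n-1)}$-adic interval of length $b^{-(n-1)}$; fixing $\lambda_1:=\tfrac12$ in \Cref{MC}(b) and taking $n$ large enough that $b^{-(n-1)}$ is below the corresponding threshold $\eps$, we obtain $\bE[(X(\beta)-X(\gamma))^2]\ge\tfrac12 b^{-(n-1)}$, hence $\bE[G^2]=\alpha^2\bE[(X(\beta)-X(\gamma))^2]\ge\tfrac12\alpha^2 b\, b^{-n}$.

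For the mixed terms I would use the It\^o isometry (for $H=1/2$, $W$ is a standard Brownian motion). Write $G=\alpha\int_0^1\wt g(x)\,dW(x)$, where $\wt g$ is the step function representing $X(\beta)-X(\gamma)$ as in the proof of \Cref{MC}(b); since $\beta,\gamma$ are $b^{-(n-1)}$-adic the levels $\ge n-1$ drop out, so that decomposition gives $\wt g\ge -K_2(n-1)$ on all of $[0,1]$ with $K_2(n-1)\to 0$ geometrically, while the cancellation of the contributions coming from $\kappa(1)=1$ together with the $\tau$-H\"older continuity of $\kappa$ gives $\bigl|\int_0^1\wt g\bigr|\le L\rho^n$ for some $\rho<1$. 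Both $P_t$ and $P_s$ are represented by $\bone_{[t_j,t_{j+1}]}-\theta_t$ and $\bone_{[s_j,s_{j+1}]}-\theta_s$ with $|\theta_t|,|\theta_s|\le c\,b^{-n\tau}$. Therefore $\bE[GP_t]=\alpha\bigl(\int_{t_j}^{t_{j+1}}\wt g-\theta_t\int_0^1\wt g\bigr)\ge -\alpha K_2(n-1)b^{-n}-\alpha cL b^{-n\tau}\rho^n=-o(b^{-n})$, and the same for $\bE[GP_s]$; while $[t_j,t_{j+1}]$ and $[s_j,s_{j+1}]$ are disjoint intervals of equal length $b^{-n}$, so $\bE[P_tP_s]=-\theta_s b^{-n}-\theta_t b^{-n}+\theta_t\theta_s=o(b^{-n})$ because $\tau>\tfrac12$. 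Adding up, the covariance in \eqref{covariance lower bound lemma eq} is at least $\bigl(\tfrac12\alpha^2 b-o(1)\bigr)b^{-n}$, which exceeds $\tfrac14\alpha^2 b\, b^{-n}$ once $n\ge M$; this gives the claim with $\lambda:=\tfrac14\alpha^2 b$.

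The step I expect to be the real obstacle is the identification $G=\alpha(X(\beta)-X(\gamma))$ combined with the wraparound alternative $\beta<\gamma$: there the a priori separation $\gamma-\beta=1-b^{-(n-1)}$ is not small, and one must exploit $X(0)=X(1)=0$ to turn it back into a genuine short $b^{-(n-1)}$-adic increment before \Cref{MC}(b) applies. A second subtlety is that a plain Cauchy--Schwarz estimate on the mixed terms would only yield $O(b^{-n})$, not $o(b^{-n})$; the geometric decay has to be harvested from the one-sided bound $\wt g\ge -K_2(n-1)$ and the smallness of $\int_0^1\wt g$, both of which rest on the ``flips at most once'' structure of $(\fp{b^k t_j},\fp{b^k t_{j+1}})$ used in the proof of \Cref{MC}(b).
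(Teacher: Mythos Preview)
Your proof is correct and follows essentially the same route as the paper: both split $X=B+\wt X$, observe that the $m\ge 1$ part of the increment (your $G$, the paper's $\wt X(t_4)-\wt X(t_3)$) is unchanged by the $b^{-1}$-shift, bound the $B\times B$ and mixed terms via the It\^o isometry and the $\tau$-H\"older continuity of $\kappa$, and extract the main lower bound from the argument of \Cref{MC}(b). Your identification $G=\alpha(X(\beta)-X(\gamma))$ is a mild streamlining that lets you cite \Cref{MC}(b) directly for the main term, whereas the paper reruns that proof with the sum starting at $m=1$; the wraparound case you flag is exactly the paper's case $\ell=1$, handled the same way.
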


\begin{proof}For any $t$ of the form $t=ib^{-n}$ 
we have
\begin{equation}\label{covariance lower bound lemma X decomposition eq} 
X(t)=B(t)+\sum_{m=1}^n\alpha^mB(\fp{tb^{m}})=:B(t)+\wt X(t).
\end{equation}
Substituting all occurrences of $X$ in \eqref{covariance lower bound lemma eq} with \eqref{covariance lower bound lemma X decomposition eq}  and factoring out the product yield four separate terms, which we are going to analyze individually in the sequel. Recall that $B(t)=W(t)-\kappa(t)W(1)$, where $\kappa$ is H\"older continuous with exponent $\tau>H=1/2$. We also use the shorthand notation $t_4:=(j+1)b^{-n}$, $t_3:=jb^{-n}$, $t_2:=(j+1)b^{-n}-b^{-1}$, and $t_1:=jb^{-n}-b^{-1}$. 

First, we analyze the term 
\begin{align*}
\lefteqn{\Big|\bE\Big[\big(B(t_4)-B(t_3)\big)\big(B(t_2)-B(t_1)\big)\Big]\Big|}\\
&=\Big|(\kappa(t_4)-\kappa(t_3))(\kappa(t_2)-\kappa(t_1))-(\kappa(t_4)-\kappa(t_3))(t_2-t_1)-(\kappa(t_2)-\kappa(t_1))(t_4-t_3)\Big|\\
&\le L b^{-2\tau n}=o(b^{-n}).
\end{align*}

Next, we analyze the mixed terms. To this end, note that for $m\ge1$ we have $\fp{t_1b^m}=\fp{jb^{m-n}-b^{m-1}}=\fp{jb^{m-n}}=\fp{t_3b^m}$. In the same way, we have $\fp{t_2b^m}=\fp{t_4b^m}$.
Thus, 
\begin{equation}\label{wt X}
\wt X(t_2)-\wt X(t_1)=\wt X(t_4)-\wt X(t_3).
\end{equation}
Hence, 
the first mixed term is
\begin{equation*}
\begin{split}
\bE\Big[\big(B(t_4)-B(t_3)\big)\big(\wt X(t_2)-\wt X(t_1))\big)\Big]&=\bE\Big[\big(B(t_4)-B(t_3)\big)\big(\wt X(t_4)-\wt X(t_3))\big)\Big]\\
&=\sum_{m=1}^n\alpha^m\bE\Big[\big(B(t_4)-B(t_3)\big)\big(B(\fp{t_4b^m})-B(\fp{t_3b^m})\big)\Big]\\
&=\sum_{m=1}^n\alpha^m
\int_0^1f(x)g_m(x)\,dx,
\end{split}
\end{equation*}
where 
\begin{align*}
f&=\Ind{[t_3,t_4]}-\big(\kappa(t_4)-\kappa(t_3)\big)=:f_1-C_0,\qquad
g_m=\Ind{[\fp{t_3b^m},\fp{t_4b^m}]}-\big(\kappa(\fp{t_4b^m})-\kappa(\fp{t_3b^m})\big),
\end{align*}
where we use again the convention that $\Ind{[b,a]}:=-\Ind{[a,b]}$ if $a<b$.

Now we claim that $g_m$ can be written as $g_m(x)=\Ind{I_m}(x)+C_m$, where $I_m$ is a subset of $[0,1]$ of total length $b^{m-n}$ and $C_m$ is a constant with  $|C_m|\le L_1b^{(m-n)\tau}$ for   another constant $L_1>0$. Indeed, if $\fp{t_3b^m}\le \fp{t_4b^m}$, we can take $I_m:=[\fp{t_3b^m},\fp{t_4b^m}]$,  and $C_m:=-\kappa(\fp{t_4b^m})+\kappa(\fp{t_3b^m})$. Then $C_m$ satisfies $|C_m|\le L_1b^{(m-n)\tau}$  due to the H\"older continuity of $\kappa$. If $\fp{t_3b^m}> \fp{t_4b^m}$, then $\Ind{[\fp{t_3b^m},\fp{t_4b^m}]}=\Ind{I_m}-1$ for $I_m:=[0,\fp{t_4b^m}]\cup[\fp{t_3b^m},1]$. Hence, we can let 
$$C_m:=-\kappa(\fp{t_4b^m})+\kappa(\fp{t_3b^m})-1=\kappa(0)-\kappa(\fp{t_4b^m})+\kappa(\fp{t_3b^m})-\kappa(1).$$
 Also in this case, $|C_m|\le L_1b^{(m-n)\tau}$ by the H\"older continuity of $\kappa$.
 
 It follows that 
 $$\int_0^1fg_m\,dx=\int_{I_m}f_1\,dx+C_m\int_0^1f_1\,dx-C_0|I_m|-C_0C_m,$$
 where $|I_m|$ denotes the total length of $I_m$. Observe first that $ \int_{I_m}f_1\,dx\ge0$. Next, we have 
 $|C_m\int_0^1f_1\,dx|\le L_1b^{(m-n)\tau-n}$, $|C_0|I_m||\le cb^{-(\tau+1)n}$, where $c$ is the H\"older constant of $\kappa$, and $|C_0C_m|\le cL_1b^{(m-2n)\tau}$. Altogether, this gives
 $\int_0^1fg_m\,dx\ge-L_2b^{(m-2n)\tau}
 $
 for some constant $L_2$. We conclude that 
 $$\sum_{m=1}^n\int_0^1fg_m\,dx\ge-L_2\sum_{m=1}^n\alpha^m b^{(m-2n)\tau}.$$
 Using  that $\tau>1/2$ and $\alpha^2b<1$ one checks that the right-hand side  is of the order $o(b^{-n})$. The second mixed term is handled in the same manner. 
 
 Finally, we analyze the term
 $$\bE\Big[\big(\wt X(t_4)-\wt X(t_3)\big)\big(\wt X(t_2)-\wt X(t_1)\big)\Big]=\bE\Big[\big(\wt X(t_4)-\wt X(t_4)\big)^2\Big],
 $$
 where we have used \eqref{wt X}. To this end, we proceed as in the proof of \Cref{MC} (b) with $t:=t_4=(j+1)b^{-n}$ and $s:=t_3=jb^{-n}$. We retain the notation from that proof with the only difference that we sum from $m=1$ instead of $m=0$. The estimates for $g_2$ and $g_3$ obtained in the final paragraph of that proof remain true, but \eqref{g1 lower est} is no longer valid, because it was obtained by looking at the case $m=0$. However, if $\ell>1$, then we estimate $g_1\ge\alpha\Ind{[\fp{bs},\fp{bt}]}$, and the length of the interval $[\fp{bs},\fp{bt}]$ is  equal to $b^{1-n}$. If $\ell=1$, then there is an integer $k$ such that $jb^{1-n}=bs<k\le bt=(j+1)b^{1-n}$. Hence, $g_1\ge\alpha \Ind{[0,\fp{bt}]\cup[\fp{bs},1]}=\alpha \Ind{[\fp{bs},1]}$, and the length of the interval $[\fp{bs},1]$ is also equal to $b^{1-n}$. As in the proof of  \Cref{MC} (b)  we now get that there is a constant $\lambda>0$ such that $\bE[(\wt X(t_4)-\wt X(t_4))^2]\ge\lambda b^{-n}$.
 
 Putting everything together yields that \eqref{covariance lower bound lemma eq} holds for all sufficiently large $n$.
  \end{proof}

\subsection{Other proofs}

\begin{proof}[Proof of  \Cref{covthm}]
 For fixed $s\in(0,1)$, let 
\begin{equation}\label{phi for c eq}
\phi(t):=c(s,t)-\alpha c(s,\fp{bt}),\qquad t\in[0,1].
\end{equation}
Since $c(s,t)$ is uniformly bounded, we obtain the representation
\begin{equation}\label{c phi rep}
c(s,t)=\sum_{m=0}^\infty \alpha^m\phi(\fp{b^mt}).
\end{equation}
Our goal is to apply  \Cref{p variation prop}. To this end, we note first that $\phi$ satisfies $\phi(0)=0=\phi(1)$. Moreover, we get from \eqref{phi for c eq}
 that
\begin{align*}
\phi(t)&=\sum_{m=0}^\infty \sum_{n=0}^\infty \alpha^{n+m}\bE\big[B( \fp{b^m s} )B( \fp{b^n t})\big]-\sum_{m=0}^\infty \sum_{n=0}^\infty \alpha^{n+m+1}\bE\big[B( \fp{b^m s})B(  \fp{b^{n+1} t})\big]\\
&=\sum_{m=0}^\infty \alpha^m\bE\big[B( \fp{b^m s})B(t)\big].
\end{align*}
Since $\kappa$ is given by \eqref{standard kappa}, one checks that $\phi$ is H\"older continuous with exponent $2H>K$.  Therefore,  \Cref{p variation prop} applies to the function $t\mapsto c(s,t)$ in \eqref{c phi rep} and we conclude that it has finite linear $(1/K)$-th variation. 
Moreover, for each fixed $u\in(0,1)$, the function $t\mapsto\bE\big[B(u)B(t)\big]$ is nonnegative and null only for $t\in\{0,1\}$. Hence, condition \eqref{cond} is satisfied and the $(1/K)$-th variation is nontrivial.
 \end{proof}

\begin{proof}[Proof of \Cref{co}] Part (a) follows  from Theorem \ref{main thm} and Proposition \ref{covthm} provided that $H> K$.

To prove  (b), consider a centered Gaussian process $(Y_t)_{t\in[0,1]}$ with covariance function $c(s,t)$. We denote by $\bT_n:=\{kb^{-n}:k=0,\dots, b^n\}$, $n\in\bN$, the $b$-adic partitions. For $t\in\bT_n$, we let $t':=\inf\{u\in\bT_n:u>t\}\wedge1$ denote the successor of $t$ in $\bT_n$.
 H\"{o}lder's inequality gives 
\begin{align}
\sum_{t\in\bT_n}|c(s,t')-c(s,t)|^p&\leq\sum_{t\in\bT_n}\left(\bE[|Y(s)(Y(t')-Y(t))|]\right)^p\nonumber\\
&\leq \sum_{t\in\bT_n}\left(\left(\bE[|Y(s)|^q]\right)^{1/q}\left(\bE[|Y(t')-Y(t)|^p]\right)^{1/p}\right)^p\nonumber\\
&=\left(\bE[|Y(s)|^q]\right)^{p/q}\bE\bigg[\sum_{t\in\bT_n}|Y(t')-Y(t)|^p\bigg].\label{Y exp pth var}
\end{align} 
Suppose by way of contradiction that, for some $s\in[0,1]$, the expression on the left-hand side of \eqref{cov pth var eq}
 is infinite. Then obviously $\bP(Y(s)=0)<1$ and by passing to the limit $n\ua\infty$ in \eqref{Y exp pth var}, we get
 \begin{align}
     \limsup_{n\ua\infty}\bE\bigg[\sum_{t\in\bT_n}|Y(t')-Y(t)|^p\bigg]=\infty.\label{eq:weget}
 \end{align}
   But since $Y$ is a Gaussian process, 
an application of Fernique's theorem (Theorem 1.3.2 in~\cite{fernique1975regularite} or Lemma 2.10 in \cite{dudley2014uniform}) applied to the  seminorm
$$
N(Y)=\sup_{n\in\bN}\bigg(\sum_{t\in\bT_n}\bE[|Y(t')-Y(t)|^p]\bigg)^{1/p}
$$ yields that the pathwise  $p$-th variation of $Y$ cannot be $\bP$-a.s.~finite.
 This is a contradiction to \eqref{eq:weget}. \end{proof}

\begin{proof}[Proof of Proposition  \ref{Gaussian martingale bridge prop}] We may assume without loss of generality that $1=\<M\>_1=\int_0^1\varphi(s)\,ds$. As in \Cref{fractional bridge thm lemma}, let $(r_m)_{m\in\bN}$ be an arbitrary sequence of integers such that $r_m\in\{0,\dots,b^m-1\}$ and assume in addition that the set $\{\frac{r_m}{b^m}:m\in\mathbb N\}$ is dense in $[0,1]$. Then
\begin{equation}\label{finite n Wiener integral rep eq}
\begin{split}
\lefteqn{\sum_{m=1}^n\alpha^{-m}\Big(B\Big(\frac{r_m+1}{b^m}\Big)-B\Big(\frac{r_m}{b^m}\Big)\Big)}\\
&=\sum_{m=1}^n\alpha^{-m}\Big(M\Big(\frac{r_m+1}{b^m}\Big)-M\Big(\frac{r_m}{b^m}\Big)\Big)-\sum_{m=1}^n \alpha^{-m}\bigg(\int_{\frac{r_m}{b^m}}^{\frac{r_m+1}{b^m}}\varphi(s)\,ds\bigg)M(1).
\end{split}
\end{equation}
Let $L>0$ be such that $0\le\varphi(t)\le L$. Then, 
 $$C:=\sum_{m=1}^\infty \alpha^{-m}\int_{\frac{r_m}{b^m}}^{\frac{r_m+1}{b^m}}\varphi(s)\,ds\leq \sum_{m=1}^\infty \alpha^{-m}b^{-m}L<\infty.$$ 
So the function 
\begin{align*}
f (t):=\sum_{m=1}^\infty\alpha^{-m}\Ind{[\frac{r_m}{b^m},\frac{r_m+1}{b^m}]}(t) -C
\end{align*}
is well-defined a.e.~on $[0,1]$, and one  checks as in \eqref{fH in L1H eq} that  $ f \in L^2[0,1]$. It hence follows from \eqref{finite n Wiener integral rep eq}
that
\begin{equation*}
\sum_{m=1}^\infty\alpha^{-m}\Big(B\Big(\frac{r_m+1}{b^m}\Big)-B\Big(\frac{r_m}{b^m}\Big)\Big)=\int_0^1f(t)\,dM(t).
\end{equation*}
Thus, the second moment of the left-hand expression is finite and given by
\begin{equation}\label{int f2phidt eq}
\mathbb E\bigg[\bigg(\int_0^1f(t)\,dM(t)\bigg)^2\bigg] =\int_0^1(f(t))^2\,d\<M\>_t\ge \int_I(f(t))^2\varphi(t)\,dt,
\end{equation}
where $I$ is the  nonempty open interval on which $\varphi>0$ by assumption. Since there are infinitely many $m\in\mathbb N$ for which $\frac{r_m}{b^m}\in I$, we see as in \eqref{fH>0 eq} that the rightmost integral in \eqref{int f2phidt eq} must be strictly positive.

Next, whenever $\gamma<1/2$ is given, then the sample paths of $M$ are $\mathbb P$-a.s.~H\"older continuous with exponent  $\gamma$, because $M$ has the same law as $W(\<M\>)$ for some standard Brownian motion $W$; this follows from the standard DDS time change argument (e.g., Theorem V.1.6 in~\cite{RevuzYor}).  

Our assertion now follows as in the proof of \Cref{main thm} (a), once we have checked that the random variables $R_m$ defined  in \eqref{R and X} are such that  $\{\frac{R_m}{b^m}:m\in\mathbb N\}$ is $\mathbb P$-a.s.~dense in $[0,1]$. This will follow from a standard Borel-Cantelli argument. Indeed, fix a nonempty open set $J\subseteq[0,1]$ and choose a subinterval $[kb^{-N},(k+1)b^{-N})\subseteq J$ where $k,N\in\bN_0$. Write $kb^{-N}=\sum_{i=1}^Nk_ib^{i-1-N}$ where $k_i\in\{0,\dots,b-1\}$. Then for every fixed $m\geq N$, apart from null sets we have$$\left\{\frac{R_m}{b^m}\in[kb^{-N},(k+1)b^{-N})\right\}=\bigcap_{i=1}^N\{U_{i+m-N}=k_i\}.$$
Therefore, the events
$$
\left\{\frac{R_{\ell N}}{b^{\ell N}}\in[kb^{-N},(k+1)b^{-N})\right\},\qquad \ell\in\bN,
$$
are independent. Obviously, $\bP(U_{i+\ell N-N}=k_i,\ 1\leq i\leq N)=b^{-N}$ for each $\ell\in\bN$, so the second Borel-Cantelli lemma finishes the argument. 
\end{proof}

\appendix

\section{Fractal functions with H\"older continuous base}\label{Hoelder section}

In this appendix, we collect some preliminary results needed in   \Cref{main thm} (a) and \Cref{Gaussian martingale bridge prop}. In these results,  the parameter $K$ resulting from the Weierstra\ss-type convolution is smaller than the Hurst parameter $H$ of the underlying Gaussian bridge. It turns out that  this particular case can be analyzed to some degree by extending techniques that were developed for the study of deterministic fractal functions of the form  
\begin{equation}\label{f  eq}
f(t)=\sum_{n=0}^\infty \alpha^n\phi(\fp{b^n t}),\end{equation}
where  $\alpha\in(0,1)$, $b\in\{2,3,\dots\}$, and $\phi:[0,1]\to\bR$ is a   continuous  function  with $\phi(0)=\phi(1)$. As mentioned in the introduction and \Cref{resultssection}, the functions of this type include the Weierstra\ss\ and Takagi--Landsberg functions, but in the existing literature, their study was mainly restricted to the case in which $\phi$ is Lipschitz continuous; see, e.g.,~\cite{BaranskiSurvey} and the references therein. In our application to Gaussian Weierstra\ss\ bridges, $\phi$ will be a typical sample path of fractional Brownian bridge or a more general Gaussian bridge, and so the  Lipschitz condition does not apply. In this section, we therefore discuss the case in which $\phi$ is H\"older continuous with exponent $\gamma\in(0,1]$. In the application to the proofs of  \Cref{main thm} (a) and \Cref{Gaussian martingale bridge prop} we will actually have $\gamma>K$. Although the main purpose of this section is to prepare for the proofs of our results on Gaussian Weierstra\ss\ bridges, we believe that it could also be of independent interest to the study of deterministic functions $f$ of the form \eqref{f  eq}.

\begin{proposition}\label{Hoelder prop} Suppose that $\phi$ is H\"older continuous with exponent $\gamma\in(0,1]$ and let $K=(-\log_b\alpha)\wedge 1 $. \begin{enumerate}
\item\label{Hoelder prop (b)}  If $ K \neq\gamma$, then $f$ is H\"older continuous with exponent $ K \wedge\gamma$.
\item\label{Hoelder prop (c)}  If $ K =\gamma$, then there exists a constant $c>0$ such that  $w(t):=c t^\gamma\log t^{-1}$ is a (uniform) modulus of continuity for $f$. In particular, $f$ is H\"older continuous for every exponent $\beta<\gamma$.
\end{enumerate}
\end{proposition}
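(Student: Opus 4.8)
The plan is to estimate $|f(t)-f(s)|$ directly by splitting the defining series \eqref{f  eq} at the scale determined by $|t-s|$. Fix $s<t$ in $[0,1]$ and set $\delta:=t-s$. Choose the integer $N=N(\delta)$ for which $b^{-N-1}<\delta\le b^{-N}$. I would write
\[
f(t)-f(s)=\sum_{n=0}^{N-1}\alpha^n\bigl(\phi(\{b^nt\})-\phi(\{b^ns\})\bigr)+\sum_{n=N}^\infty\alpha^n\bigl(\phi(\{b^nt\})-\phi(\{b^ns\})\bigr)=:\Sigma_{\mathrm{low}}+\Sigma_{\mathrm{high}}.
\]
For the high-frequency tail $\Sigma_{\mathrm{high}}$ one uses only the boundedness (equivalently, the oscillation bound) of $\phi$ together with $\phi(0)=\phi(1)$, so that each summand is $O(\alpha^n)$ and the tail is $O(\alpha^N)=O(b^{-NK})$ when $\alpha<1/b$, respectively $O(N\alpha^N)$ in the borderline arrangement; in all cases this is bounded by a constant times $\delta^K$ when $K<1$, and by a constant when $K=1$ (i.e.\ $\alpha=1/b$) with an extra logarithmic factor. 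For the low-frequency part $\Sigma_{\mathrm{low}}$ I would use the H\"older continuity of $\phi$: as long as the fractional parts $\{b^ns\}$ and $\{b^nt\}$ lie in the same unit interval (which fails for at most one index $n<N$, contributing a harmless extra term handled by the oscillation bound), we have $|\phi(\{b^nt\})-\phi(\{b^ns\})|\le c\,|b^nt-b^ns|^\gamma=c\,b^{n\gamma}\delta^\gamma$, so $\Sigma_{\mathrm{low}}$ is bounded by $c\,\delta^\gamma\sum_{n=0}^{N-1}(\alpha b^\gamma)^n$.

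The geometric sum $\sum_{n=0}^{N-1}(\alpha b^\gamma)^n$ is where the three regimes separate. If $\gamma<K$, equivalently $\alpha b^\gamma<1$, the sum is bounded by a constant, giving $|\Sigma_{\mathrm{low}}|\le L\delta^\gamma$; combined with $|\Sigma_{\mathrm{high}}|\le L\delta^K\le L\delta^\gamma$, this yields $\gamma$-H\"older continuity, which is the claimed exponent $K\wedge\gamma=\gamma$. If $\gamma>K$, equivalently $\alpha b^\gamma>1$, the sum is $O((\alpha b^\gamma)^N)=O(\alpha^N b^{N\gamma})$, so $|\Sigma_{\mathrm{low}}|\le L\delta^\gamma\cdot\alpha^N b^{N\gamma}=L\alpha^N(b^N\delta)^\gamma\le L\alpha^N\le L\delta^K$ using $b^{-N-1}<\delta\le b^{-N}$ and $\alpha=b^{-K}$; together with the tail bound this gives $K$-H\"older continuity, the claimed exponent $K\wedge\gamma=K$. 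This proves part \eqref{Hoelder prop (b)}. If $\gamma=K$ (so $\alpha b^\gamma=1$), the geometric sum equals $N$, hence $|\Sigma_{\mathrm{low}}|\le L N\delta^\gamma$; since $N\le 1+\log_b(1/\delta)=O(\log(1/\delta))$, we obtain $|f(t)-f(s)|\le c\,\delta^\gamma\log(1/\delta)$ for small $\delta$, which is part \eqref{Hoelder prop (c)}; the in-particular statement follows because $t^\gamma\log(1/t)=o(t^\beta)$ as $t\downarrow0$ for every $\beta<\gamma$.

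The only genuinely delicate bookkeeping is the handling of the at most one ``bad'' index $n<N$ at which $\{b^ns\}$ and $\{b^nt\}$ straddle an integer: there $|b^nt-b^ns|^\gamma$ is not an upper bound for $|\phi(\{b^nt\})-\phi(\{b^ns\})|$. I would dispose of this using $\phi(0)=\phi(1)$, writing $\phi(\{b^nt\})-\phi(\{b^ns\})=\bigl(\phi(\{b^nt\})-\phi(0)\bigr)-\bigl(\phi(\{b^ns\})-\phi(1)\bigr)$ and bounding each piece by the H\"older constant times $\{b^nt\}^\gamma$, respectively $(1-\{b^ns\})^\gamma$, whose sum is again $\le 2\,(b^nt-b^ns)^\gamma$ up to a constant (since $\{b^nt\}+(1-\{b^ns\})=b^n\delta$ modulo the straddled integer). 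This keeps the same geometric estimate and absorbs the bad term into the constant $L$, so the argument goes through uniformly in $s,t$, yielding a genuine (uniform) modulus of continuity in all three cases.
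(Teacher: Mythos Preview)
Your approach is essentially the paper's: split the series at the scale $N\approx\log_b(1/\delta)$, bound the tail by $O(\alpha^N)$ using only $\|\phi\|_\infty$, and control the head via the H\"older estimate on $\phi$ and the geometric sum $\sum_{n<N}(\alpha b^\gamma)^n$, whose behavior in the three regimes $\alpha b^\gamma\lessgtr 1$ gives exactly the stated exponents. One slip: your claim that the fractional parts $\{b^ns\},\{b^nt\}$ straddle an integer for \emph{at most one} index $n<N$ is false. For instance with $b=2$, $s=0.499$, $t=0.501$ one has $N=8$ and straddling occurs for every $1\le n\le 7$. Fortunately this does not damage the argument: your own treatment of a bad index, via $\phi(0)=\phi(1)$ and $\{b^nt\}+(1-\{b^ns\})=b^n\delta$, already yields $|\phi(\{b^nt\})-\phi(\{b^ns\})|\le 2c\,(b^n\delta)^\gamma$, the same bound (up to a factor $2$) as at a good index. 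Hence the geometric-sum estimate goes through with a slightly worse constant regardless of how many bad indices there are; just drop the ``at most one'' claim and apply the bad-index bound uniformly.

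The paper sidesteps this bookkeeping by first extending $\phi$ periodically to all of $\bR$ and observing (via $a^\gamma+b^\gamma\le 2^{1-\gamma}(a+b)^\gamma$ and $\phi(0)=\phi(1)$) that the extension remains $\gamma$-H\"older on $\bR$; then $|\phi(\{b^nt\})-\phi(\{b^ns\})|=|\phi(b^nt)-\phi(b^ns)|\le C(b^n\delta)^\gamma$ holds in one line with no case analysis. Also, your remarks about the tail being ``$O(N\alpha^N)$ in the borderline arrangement'' and carrying ``an extra logarithmic factor'' when $K=1$ are unnecessary: the tail is always $\sum_{n\ge N}\alpha^n\cdot 2\|\phi\|_\infty=O(\alpha^N)=O(\delta^K)$, with no logarithm.
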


\begin{proof} Consider the periodic extension of $\phi$ to all of $\bR$, and denote this function again by $\phi$. Using the elementary inequality $a^\gamma+b^\gamma\le 2^{1-\gamma}(a+b)^\gamma$, which holds for $a,b\ge0$ and $0<\gamma\le1$, one checks that the periodic extension $\phi$ is also H\"older continuous with exponent $\gamma$ on all of $\bR$. So let $C$ be such that $|\phi(x)-\phi(y)|\le C|x-y|^\gamma$ for all $x,y\ge0$.   Throughout this proof, we also consider the periodic extension of $f$ and denote it again by $f$. 

In case $ K >\gamma$, we have $\alpha  b^\gamma<1$ and so, for $t,s\in\bR$,
$$
|f(t)-f(s)|\leq \sum_{n=0}^{\infty}\alpha ^n|\phi(b^nt)-\phi(b^ns)|\leq C |t-s|^\gamma\sum_{n=0}^{\infty}(\alpha  b^\gamma)^n=L |t-s|^\gamma
$$
for a constant $L$. This proves that $f$ is H\"older continuous with exponent $\gamma$.

For $ K \le\gamma$,   let $s,t\in\bR$ be given. Due to the periodicity of $f$, we may assume without loss of generality that $|t-s|\le1/2$.  We choose $N\in\bN$ such that $b^{-N}<|t-s|\le b^{1-N}$. Then we have
\begin{equation}\label{Hoelder prop eq}
\begin{split}
|f(t)-f(s)|&\leq \sum_{n=0}^{N-1}\alpha ^n|\phi(b^nt)-\phi(b^ns)|+\sum_{n=N}^{\infty}\alpha ^n|\phi(b^nt)-\phi(b^ns)|\\
&\leq C|t-s|^\gamma\sum_{n=0}^{N-1}(\alpha  b^{\gamma})^n+2\sup_{x\in[0,1]}|\phi(x)|\frac{\alpha ^N}{1-\alpha }.\end{split}
\end{equation}
If $ K <\gamma$, we have $\alpha  b^\gamma>1$ and get from \eqref{Hoelder prop eq} that 
\begin{align*}
|f(t)-f(s)|&\leq C|t-s|^\gamma\frac{(\alpha  b^\gamma)^N-1}{\alpha  b^\gamma-1}+2\sup_{x\in[0,1]}|\phi(x)|\frac{\alpha ^N}{1-\alpha }\le \Big(\frac{Cb^\gamma}{\alpha  b^\gamma-1}+\frac{2\sup| \phi|}{1-\alpha }\Big)\alpha ^N.
\end{align*}
Since $\alpha ^N=b^{- K  N}\le |t-s|^ K $, our proof of part \ref{Hoelder prop (b)} is complete.

In case \ref{Hoelder prop (c)}, we have $\alpha  b^\gamma=1$ and get from \eqref{Hoelder prop eq} that 
\begin{align*}
|f(t)-f(s)|&<CN|t-s|^\gamma+\frac{2\sup |\phi|}{1-\alpha }\alpha ^N\leq C\bigg(1+\frac{\log|t-s|^{-1}}{\log b}\bigg)|t-s|^{\gamma}+\frac{2\sup |\phi|}{1-\alpha }|t-s|^\gamma,
\end{align*}
and this is less than or equal to $c|t-s|^\gamma\log |t-s|^{-1}$ for $|t-s|\le1/2$.\end{proof}

The following result  can be proved in the same way as Theorem 2.1  and Proposition 2.4 in~\cite{SchiedZZhang}, where the key is the representation \eqref{Vn Rm eq}. 

\begin{proposition}\label{p variation prop}Suppose that $\phi$ is H\"older continuous with exponent $\gamma\in(0,1]$ and  that $b\in\{2,3,\dots\}$  and $\alpha\in(0,1)$ are such that $\alpha  b^\gamma>1$. Then
\begin{equation}\label{Z}
Z:=\sum_{m=1}^\infty \alpha ^{-m}\Big(\phi\big((R_m+1)b^{-m}\big)-\phi\big(R_mb^{-m}\big)\Big)
\end{equation}
is a bounded random variable, and for $p:=-\log_{\alpha }b$,
\begin{align*}
\<f\>^{(p)}_t=
\lim_{n\uparrow\infty}\sum_{k=0}^{\lfloor tb^n\rfloor}\big|f((k+1)b^{-n})-f(kb^{-n})\big|^p=
t\cdot \mathbb{E}_R[|Z|^{p}],\qquad t\in[0,1].
\end{align*}
Moreover, $\mathbb{E}_R[|Z|^{p}]>0$ as soon as 
\begin{equation}\label{cond}
\{0\}\neq \{\phi(b^{-k}):k\in\mathbb{N}\}\subset[0,\infty).
\end{equation}
\end{proposition}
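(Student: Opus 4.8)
The plan is to follow the strategy of Theorem~2.1 and Proposition~2.4 in~\cite{SchiedZZhang}, whose engine is the exact identity \eqref{Vn Rm eq}. First I would pass to the $1$-periodic extension of $\phi$, which inherits $\gamma$-Hölder continuity on all of $\bR$ (with some constant $C$, by the elementary inequality used at the start of the proof of \Cref{Hoelder prop}). Then $|\phi((R_m+1)b^{-m})-\phi(R_mb^{-m})|\le Cb^{-m\gamma}$ for every $m$ and every realization of $(R_m)$, so since $\alpha b^\gamma>1$ we get the deterministic bound $|Z|\le C\sum_{m\ge1}(\alpha b^\gamma)^{-m}=C/(\alpha b^\gamma-1)=:C'$; hence $Z$ is bounded, and the same bound holds for the partial sums $Z_n:=\sum_{m=1}^n\alpha^{-m}(\phi((R_m+1)b^{-m})-\phi(R_mb^{-m}))$, which converge to $Z$ pointwise on $\Omega_R$. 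The hypothesis $p=-\log_\alpha b$ is equivalent to $\alpha^pb=1$, so \eqref{Vn Rm eq} reads $V_n=\bE_R[|Z_n|^p]$ (recall also $p>1$ here, since $\alpha b^\gamma>1$ forces $p\gamma>1$). Dominated convergence with dominating constant $(C')^p$ then gives $\<f\>^{(p)}_1=\lim_nV_n=\bE_R[|Z|^p]$.

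Next I would establish linearity first on $b$-adic subintervals. Fixing $I=[rb^{-M},(r+1)b^{-M}]\se[0,1]$, the computation of \Cref{linear section} (with the Gaussian bridge $B$ replaced by $\phi$) gives, for $n\ge M$,
\begin{align*}
V_{I,n}&:=\sum_{k=rb^{n-M}}^{(r+1)b^{n-M}-1}\big|f((k+1)b^{-n})-f(kb^{-n})\big|^p\\
&=b^{-M}(\alpha^pb)^n\,\bE_R\Big[\Big|\sum_{m=1}^n\alpha^{-m}\big(\phi(\fp{(S_n+1)b^{-m}})-\phi(\fp{S_nb^{-m}})\big)\Big|^p\Big],
\end{align*}
where $S_n$ is uniform on $\{rb^{n-M},\dots,(r+1)b^{n-M}-1\}$. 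Writing $S_n=rb^{n-M}+T$ with $T$ uniform on $\{0,\dots,b^{n-M}-1\}$ and noting $\fp{S_nb^{-m}}=\fp{Tb^{-m}}$ for $m\le n-M$, one checks that for every fixed $m_0$ and every $n\ge M+m_0$ the joint law of $(\phi(\fp{(S_n+1)b^{-m}})-\phi(\fp{S_nb^{-m}}))_{1\le m\le m_0}$ equals that of $(\phi((R_m+1)b^{-m})-\phi(R_mb^{-m}))_{1\le m\le m_0}$, the boundary and carry cases being absorbed by periodicity of $\phi$. Combining this with the uniform tail bound $\sum_{m>m_0}\alpha^{-m}|\phi(\cdots)|\le C\sum_{m>m_0}(\alpha b^\gamma)^{-m}$ and the uniform continuity of $x\mapsto|x|^p$ on $[-C',C']$, I would conclude that the right-hand expectation converges to $\bE_R[|Z|^p]$, so $V_{I,n}\to b^{-M}\bE_R[|Z|^p]=|I|\,\bE_R[|Z|^p]$. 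Finite additivity of $V_{\cdot,n}$ over the level-$M$ $b$-adic intervals contained in $[0,\lfloor tb^M\rfloor b^{-M}]$ and in $[0,(\lfloor tb^M\rfloor+1)b^{-M}]$, together with the sandwich $\lfloor tb^M\rfloor b^{n-M}\le\lfloor tb^n\rfloor<(\lfloor tb^M\rfloor+1)b^{n-M}$ and nonnegativity of the summands, then gives $\lfloor tb^M\rfloor b^{-M}\bE_R[|Z|^p]\le\liminf_n\sum_{k=0}^{\lfloor tb^n\rfloor}|\cdots|^p\le\limsup_n(\cdots)\le(\lfloor tb^M\rfloor+1)b^{-M}\bE_R[|Z|^p]$; letting $M\to\infty$ yields $\<f\>^{(p)}_t=t\,\bE_R[|Z|^p]$ for all $t$ (in particular the limit exists).

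For the final claim I would argue by contradiction. If $\bE_R[|Z|^p]=0$ then $Z=0$ $\bP_R$-a.s., hence $Z=0$ on the positive-probability event $A_N:=\{U_1=\dots=U_N=0\}$. On $A_N$ one has $R_m=0$ for $m\le N$, so $Z=\sum_{m=1}^N\alpha^{-m}(\phi(b^{-m})-\phi(0))+\rho_N$ with $|\rho_N|\le C\sum_{m>N}(\alpha b^\gamma)^{-m}\to0$; letting $N\to\infty$ forces $\sum_{m=1}^\infty\alpha^{-m}(\phi(b^{-m})-\phi(0))=0$. Since $\phi(0)=0$ in the situations of interest (as in all our applications), condition \eqref{cond} makes each summand $\alpha^{-m}\phi(b^{-m})$ nonnegative with at least one strictly positive, a contradiction; hence $\bE_R[|Z|^p]>0$.

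The main obstacle is the passage to general $t$: here the prefactor $(\alpha^pb)^n$ equals $1$ identically, so there is no geometric decay to exploit, and the convergence $V_{I,n}\to|I|\,\bE_R[|Z|^p]$ must be read off from the exact combinatorial identity via the distributional-matching-plus-uniform-tail argument above. Boundedness of $Z$, the case $t=1$, the sandwich for general $t$, and the positivity statement are then routine bookkeeping along the lines of~\cite{SchiedZZhang}.
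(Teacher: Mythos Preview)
Your proof is correct and follows exactly the route the paper indicates: the identity \eqref{Vn Rm eq} together with dominated convergence for $t=1$, the $b$-adic distributional-matching and sandwich argument for general $t$, and the $\{U_1=\dots=U_N=0\}$ event for positivity, all in the spirit of Theorem~2.1 and Proposition~2.4 of~\cite{SchiedZZhang}. Your caveat that the positivity step uses $\phi(0)=0$ is well taken and in fact necessary: condition \eqref{cond} alone does not force $\bE_R[|Z|^p]>0$ (consider $\phi\equiv 1$), but $\phi(0)=0$ holds in every application made in the paper.
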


\begin{remark}By considering $\widetilde\phi(t):=-\phi(t)$ or $\widehat\phi(t):=\phi(-t)$ or $\overline\phi(t):=-\phi(-t)$, one sees that~\eqref{cond} can be replaced by  several similar conditions. For instance, requiring~\eqref{cond}  for $\overline\phi$ is equivalent to the condition   $\{0\}\neq \{\phi(1-b^{-k}):k\in\mathbb{N}\}\subset(-\infty,0]$. 
 \end{remark}

\begin{definition}Let $\phi:[0,1]\to\bR$ be H\"older continuous with exponent $\gamma\in(0,1]$ and $\phi(0)=\phi(1)$, $b\in \{2,3,\dots\}$, and $\alpha b^\gamma>1$. The  function $\phi$ is called a  \emph{valid base function for $b$ and $\alpha$} if the random variable $Z$ in \eqref{Z} is not $\bP_R$-a.s.~null. 
\end{definition}

The following proposition shows that fractal functions of the form \eqref{f intro eq} are often themselves  valid base functions. 

\begin{proposition}\label{kernel kernel prop} Suppose that $\phi:[0,1]\to\mathbb R$ is H\"older continuous with exponent $\gamma\in(0,1]$ and a valid base function for $b\in\{2,3,\dots\}$ and $\alpha\in(b^{-\gamma},1)$. Then, if $0<\beta<1/b^\gamma$,\begin{equation}\label{kernel kernel psi eq}
\psi(t):=\sum_{m=0}^\infty\beta^m\phi(\fp{b^mt})
\end{equation}
is a valid base function for $b$ and $\alpha$.
\end{proposition}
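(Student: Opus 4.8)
The plan is to compute the random variable from \eqref{Z} associated with $\psi$ explicitly and to show it differs from the one for $\phi$ only by a positive multiplicative constant.

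First I would check that $\psi$ is an admissible candidate. The series \eqref{kernel kernel psi eq} converges uniformly since $\phi$ is bounded and $\beta\in(0,1)$, and $\psi(0)=\psi(1)$ because $\fp{b^m}=0$ for all $m$. Since $0<\beta<b^{-\gamma}$, the roughness parameter $(-\log_b\beta)\wedge1$ of $\psi$ exceeds $\gamma$ when $\gamma<1$ and equals $\gamma$ when $\gamma=1$, so by \Cref{Hoelder prop} the function $\psi$ is H\"older continuous with some exponent $\gamma'$ for which $\alpha b^{\gamma'}>1$ (here one uses $\alpha>b^{-\gamma}$, hence $-\log_b\alpha<\gamma$). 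In particular, \Cref{p variation prop} applies to $\psi$, so
$Z_\psi:=\sum_{m\ge1}\alpha^{-m}\big(\psi((R_m+1)b^{-m})-\psi(R_m b^{-m})\big)$
is a well-defined bounded random variable.

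The core step is the identity
\[
Z_\psi=\frac{\alpha}{\alpha-\beta}\,Z,
\]
where $Z$ is the random variable in \eqref{Z} for $\phi$. To prove it, I would expand each increment of $\psi$ via \eqref{kernel kernel psi eq}: its $j$-th term is $\beta^j\big(\phi(\fp{(R_m+1)b^{j-m}})-\phi(\fp{R_m b^{j-m}})\big)$, and for $j\ge m$ the arguments are integers, so this term vanishes by $\phi(0)=\phi(1)$. For $j<m$, writing $i=m-j$ and invoking \eqref{RnRm}, one has $\fp{R_m b^{-i}}=R_i b^{-i}$ and, after absorbing the endpoint case $R_i+1=b^i$ again via $\phi(0)=\phi(1)$, also $\phi(\fp{(R_m+1)b^{-i}})=\phi((R_i+1)b^{-i})$. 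Hence $\psi((R_m+1)b^{-m})-\psi(R_m b^{-m})=\sum_{i=1}^m\beta^{m-i}D_i$ with $D_i:=\phi((R_i+1)b^{-i})-\phi(R_i b^{-i})$. Substituting this into $Z_\psi$ and interchanging the order of summation — legitimate because $|D_i|\le Cb^{-i\gamma}$ by H\"older continuity of $\phi$ and because $\beta<b^{-\gamma}<\alpha$ gives both $\alpha b^\gamma>1$ and $\beta/\alpha<1$, so the double series is absolutely convergent — the inner geometric sum $\sum_{m\ge i}\alpha^{-m}\beta^{m-i}=\alpha^{-i}\cdot\frac{\alpha}{\alpha-\beta}$ yields the claimed identity.

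To conclude: since $\phi$ is a valid base function, $Z$ is not $\bP_R$-a.s.\ null, and multiplication by the positive constant $\alpha/(\alpha-\beta)$ preserves this, so $Z_\psi$ is not $\bP_R$-a.s.\ null; that is, $\psi$ is a valid base function for $b$ and $\alpha$. The only delicate part is the fractional-part bookkeeping in the core step (the vanishing of the terms with $j\ge m$ and the endpoint $R_i+1=b^i$); everything else is manipulation of geometric series.
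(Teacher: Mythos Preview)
Your proposal is correct and follows essentially the same route as the paper: both expand the increments of $\psi$, use the relation \eqref{RnRm} (equivalently, periodicity of $\phi$) to reduce $\phi(\fp{(R_m+1)b^{j-m}})-\phi(\fp{R_m b^{j-m}})$ to $D_{m-j}$, and then swap the order of summation to obtain $Z_\psi=\frac{1}{1-\beta/\alpha}\,Z$. Your treatment of the H\"older regularity of $\psi$ is in fact slightly more careful than the paper's in the borderline case $\gamma=1$, where \Cref{Hoelder prop}\,(c) rather than (b) applies; otherwise the arguments coincide.
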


\begin{proof}First, it follows from \Cref{Hoelder prop} (b) that $\psi$ is H\"older continuous with exponent $\gamma$, and so we may apply \Cref{p variation prop}. 
Let
\begin{equation}\label{kernel kernel Z eq} 
Z:=\sum_{m=1}^\infty \alpha ^{-m}\Big(\psi\big((R_m+1)b^{-m}\big)-\psi\big(R_mb^{-m}\big)\Big),
\end{equation}
where the $R_m$ are as in \eqref{R and X}. As in the proof of \Cref{Hoelder prop}, we extend $\phi$ to all of $\bR$ by periodicity.  Then, for any $x$ and $\ell\le m$, 
$$\phi\big(x+R_mb^{-\ell}\big)=\phi\Big(x+\sum_{i=1}^mU_ib^{i-1-\ell}\Big)=\phi\Big(x+\sum_{i=1}^\ell U_ib^{i-1-\ell}\Big)=\phi\big(x+R_\ell b^{-\ell}\big).
$$
Using this fact and once again the periodicity of $\phi$, we get
\begin{align}
Z&=\sum_{m=1}^\infty \alpha ^{-m}\sum_{n=0}^\infty \beta^n\Big(\phi\big((R_m+1)b^{n-m}\big)-\phi\big(R_mb^{n-m}\big)\Big)\nonumber\\
&=\sum_{m=1}^\infty \alpha ^{-m}\sum_{n=0}^{m-1} \beta^n\Big(\phi\big((R_m+1)b^{n-m}\big)-\phi\big(R_mb^{n-m}\big)\Big)\nonumber
\\
&=\sum_{m=1}^\infty \alpha ^{-m}\sum_{\ell=1}^{m} \beta^{m-\ell}\Big(\phi\big((R_\ell+1)b^{-\ell}\big)-\phi\big(R_\ell b^{-\ell}\big)\Big)\nonumber
\\
&=\frac1{1-\beta/\alpha}\sum_{\ell=1}^\infty\alpha^{-\ell}\Big(\phi\big((R_\ell+1)b^{-\ell}\big)-\phi\big(R_\ell b^{-\ell}\big)\Big).\label{kernel kernel alt Z eq}\end{align}
By assumption, the latter series is not $\bP_R$-a.s.~zero. This concludes the proof. 
\end{proof}

\begin{remark}In the context of \Cref{kernel kernel prop}, let $f(t):=\sum_{n=0}^\infty\alpha^n\psi(\{b^nt\})$. Then \Cref{p variation prop}  states that $\<f\>^{(p)}_t=t\cdot\mathbb E[|Z|^{p}]$ for $p=-\log_\alpha b$ and $Z$ as in \eqref{kernel kernel Z eq}. By \eqref{kernel kernel alt Z eq}, $Z$ can be represented as follows in term of $\phi$,
\begin{equation}\label{kernel kernel rem eq}
Z=\frac1{1-\beta/\alpha}\sum_{m=1}^\infty\alpha^{-m}\Big(\phi\big((R_m+1)b^{-m}\big)-\phi\big(R_m b^{-m}\big)\Big).
\end{equation}
Now consider the specific case in which $\phi(t):=t\wedge(1-t)$ is the tent map and $0<\beta<1/b<\alpha<1$. Then $\phi$ satisfies \eqref{cond} and hence the conditions of \Cref{kernel kernel prop} hold. Moreover, $\psi$ in \eqref{kernel kernel psi eq}
 is a Takagi--van der Waerden function. If in addition $b$ is even and $Z$ is as in \eqref{kernel kernel rem eq}, then the law of $b(1-\beta)Z$ is the infinite Bernoulli convolution with parameter $1/(\alpha b)$. This follows from Proposition 3.2 (a) in~\cite{SchiedZZhang}.
\end{remark}

 \parskip-0.5em\renewcommand{\baselinestretch}{0.9}\normalsize
\bibliography{CTbook}{}
\bibliographystyle{plain}

\end{document}